\newtheorem{theorem}{Theorem}[section]
\newtheorem{lemma}[theorem]{Lemma}
\newtheorem{corollary}[theorem]{Corollary}
\newtheorem{Example}[theorem]{Example}
\theoremstyle{definition}
\newtheorem{remark}[theorem]{Remark}
\title{Asymptotic zero distribution of random orthogonal polynomials}
\author{Thomas Bloom and Duncan Dauvergne}
\begin{document}
\maketitle
\begin{abstract}
We consider random polynomials of the form $H_n(z)=\sum_{j=0}^n\xi_jq_j(z)$ where the $\{\xi_j\}$
are i.i.d non-degenerate complex random variables, and the $\{q_j(z)\}$ are orthonormal polynomials with respect to a compactly supported measure $\tau$ satisfying the Bernstein-Markov property on a regular compact set $K \subset \mathbb{C}$. We show that if $\mathbb{P}(|\xi_0|>e^{|z|})=o(|z|^{-1})$, then the normalized counting measure of the zeros of $H_n$ converges weakly in probability to the equilibrium measure of $K.$ This is the best possible result, in the sense that the roots of $G_n(z)=\sum_{j=0}^n\xi_jz^j$ fail to converge in probability to the appropriate equilibrium measure when the above condition on the $\xi_j$ is not satisfied. In addition, we  give a multivariable version of this result.

We also consider random polynomials of the form $\sum_{k=0}^n\xi_kf_{n,k}z^k$, where the coefficients $f_{n,k}$ are complex constants satisfying certain conditions, and the random variables $\{\xi_k\}$ satisfy $\mathbb{E} \log(1 + |\xi_0|) < \infty$.  In this case, we establish almost sure convergence of the normalized counting measure of the zeros to an appropriate limiting measure. Again, this is the best possible result in the same sense as above.
 
\end{abstract}
\section{Introduction}
In this paper we will be concerned with the global distribution of the complex zeros of random polynomials in both one and several variables.

\medskip

The origin of the problems goes back to results on the \textit{Kac ensemble} of random polynomials
$$H_n(z)=\sum_{j=0}^n\xi_jz^j,$$
where the $\xi_j$ are i.i.d. non-degenerate complex-valued random variables. Here a random-variable is non-degenerate if its law is supported on at least two points. The interest is in the behaviour of the zeros of $H_n(z)$ as $n\rightarrow\infty.$ 

\medskip

In the case that the $\xi_j$ 
are i.i.d. complex Gaussians of mean zero and variance one, it is a classical result of Hammersley \cite{Ha} that the zeros tend to concentrate on the unit circle $\{|z|=1\}.$ 

\medskip

The Kac ensemble has been extensively studied (see \cite{HN,IZ,SV}). In particular, Ibragimov and Zaporozhets \cite{IZ} showed that the condition
\begin{equation}
\label{E:log-exp}
\mathbb{E}(\log(1+|\xi_0|))<\infty
\end{equation}
is both necessary and sufficient for almost sure weak* convergence of the normalized counting measure of the zeros (i.e. $\frac{1}{n}\sum_{j=1}^n\delta(z_j)$ where $z_1,\ldots ,z_n$ are the zeros of $H_n$) to normalized Lebesgue measure on the unit circle, $\frac{1}{2\pi}d\theta$.

\medskip

Shiffman and Zelditch \cite{SZ} took the point of view that the functions
$\{z^i\}_{i=0,1,\cdots}$ are an orthonormal basis for the polynomials in $L^2(\frac{1}{2\pi}d\theta)$. Generalizing this idea, they considered random polynomials of the form
\begin{equation}
\label{E:orthog-ensemble}
H_n(z)=\sum_{j=0}^n\xi_jq_j(z),
\end{equation}
where the $\xi_j$ are complex Gaussians of  mean zero and variance one, and the $q_j$ are an orthonormal basis for the polynomials in $L^2(d\mu)$ for certain measures  $\mu$ with compact support $K$ in the complex plane. They showed that the normalized counting measure of the zeros converges almost surely to the equilibrium measure of $K$ in the weak* topology. Various generalizations to the several variables situation are in \cite{B1},\cite{BL}, and \cite{BS}. The papers \cite{B1} and \cite{BL} include results in the case that the i.i.d. coefficients are more general than Gaussians.

\medskip

The problems studied in this paper have also been studied in other contexts. For random polynomials using a basis other than orthogonal polynomials, see \cite{PR}. For random holomorphic sections of a line bundle
see \cite{BCM}, \cite{SZ1}.

\medskip

In this paper we will primarily be concerned with finding the weakest possible conditions on the i.i.d. coefficients so that we still obtain the same limiting behaviour of the zeros.
In \cite{Ba1}, Bayraktar established the almost sure weak* convergence of the zeros of random polynomials of the form \eqref{E:orthog-ensemble}
for i.i.d. coefficients $\xi_i$ with a continuous Lebesgue density, satisfying
$$
\mathbb{P}(|\xi_0|\geq e^{|z|})=O(|z|^{-\rho}),
$$
where polynomials are considered in $d$ variables and $\rho>d+1.$  His results included the case of common zeros of $m$-tuples of random polynomials where $m\leq d.$

\medskip

In this paper, in Theorem \ref{goal} for the one-variable case and Theorem \ref{goalmulti} for the several variable case, we establish convergence in probability for the zeros of random polynomials with i.i.d coefficients satisfying
\begin{equation}
\label{E:prob-cond9}
\mathbb{P}(|\xi|\geq e^{|z|})=o(|z|^{-d})
\end{equation}
for polynomials in $d$ variables. We require no assumption about whether the $\xi_i$ have a Lebesgue density. This is a best possible result in the sense that if \eqref{E:prob-cond9} is not satisfied for the Kac ensemble, then the normalized counting measure of the zeros does not converge in probability. We prove this in Theorem \ref{T:suff-Kac}. We do not deal with the case of common zeros of random polynomials in several variables in this paper.

\medskip

As a corollary of Theorem \ref{goal}, we resolve a conjecture of Pristker and Ramachandran (\cite{PR}, Conjecture 2.5). Very roughly, they asked if there exist i.i.d. random variables $\{\xi_i\}_{i \in \mathbb{N}}$ and a sequence $\{q_i\}_{i \in \mathbb{N}}$ of orthonormal polynomials with respect to a measure $\tau$ on the unit circle
such that the normalized counting measures of the zeros of 
$$
H_n(z) = \sum_{i = 0}^n \xi_i q_i(z)
$$
converge almost surely along a subsequence $\{n_i\}$, but not almost surely along the whole sequence. In Remark \ref{R:pritsker}, we use Theorem \ref{goal} to construct random polynomials $H_n(z)$ with this property.

\medskip

The basic strategy of the proofs is to prove convergence of the normalized logarithmic potential 
\begin{equation}
\label{E:H_n-log}
\frac{1}n \log |H_n(z)|
\end{equation}
to the Green function $V_K$ of the compact set $K.$ In the one variable case this implies that the normalized counting measure of the zeros of $H_n(z)$ converges to the equilibrium measure in the weak* topology. In the several variable case, if we consider the zero set as a $(1,1)$ current, then this current  converges to the canonical $(1,1)$ current $dd^cV_K.$

\medskip

The main difficulty here is in establishing lower bounds on the function in \eqref{E:H_n-log}. This is accomplished in Theorem \ref{convprob} for the one variable case. The multivariable case is analogously done in Theorem \ref{convprobmulti}. To do this we use the Kolmogorov-Rogozin inequality. This inequality has been previously used to establish lower bounds of this type in \cite{KZ}. Unlike in that paper, our arguments dispense with the need for circular symmetry of the polynomials when applying the inequality.
 
\bigskip

In addition to the above results on convergence in probability we also consider almost sure convergence of the normalized zero counting measure of random polynomials of the form
\begin{equation}
\label{E:random-analytic}
G_n(z) = \sum_{i=0}^n \xi_i f_{n, i} z^i.
\end{equation}
Here the $\xi_i$ are non-degenerate i.i.d. complex random variables satisfying
\begin{equation}
\label{E:as-cond}
\quad \quad \mathbb{E}\log(1 + |\xi_0|) < \infty \qquad \text{(or $\mathbb{E}[\log(1 + |\xi_0|)]^d < \infty$ in $d$ variables)}
\end{equation}
 and the coefficients $\{ f_{n, i} :0 \le i \le n, n \in \mathbb{N}\}$ satisfy
$$
\lim_{n \to \infty} \frac{1}n \log \left( \sum_{k=0}^n |f_{n, k}|r^k \right) = V(r).
$$
Here $V(r)$ is a continuous function and the convergence above is locally uniform.
We will also assume that sufficiently many of the coefficients $f_{n, k}$ are large enough. This assumption will be made precise in Section \ref{S:almost-sure}. 
 
 \medskip

The conditions on the coefficients $f_{n, k}$ are quite general, and therefore the ensembles of the form \eqref{E:random-analytic} include many examples of random polynomials. For example, random polynomials of the form \eqref{E:orthog-ensemble}, where the measure $\mu$ is rotationally symmetric, satisfy these conditions. Random polynomials formed from an array of orthogonal polynomials induced by a rotationally symmetric measure and a rotationally symmetric weight function also fit into this category (see Section \ref{S:almost-sure} for details).

\medskip

Random polynomials with slightly stronger restrictions on the coefficients $f_{n, k}$ were analyzed by 
Kabluchko and Zaporozhets in \cite{KZ}. In that paper, the authors proved that the normalized counting measure of the zeros of $G_n(z)$ converges in probability to the appropriate limiting measure, and asked when this could be extended to almost sure convergence. They showed almost sure convergence for a few particular arrays using ad hoc methods.

\medskip

In Theorem \ref{T:general-KZ-roots}, we prove almost sure convergence of the normalized counting measure of the zeroes for random polynomials of the form \eqref{E:random-analytic} with the above conditions imposed on the $f_{n, k}$s, answering the question of Kabluchko and Zaporozhets. 

\medskip

 The roots converge to a measure $\nu$ that is equal as a distribution to $\frac{1}{2\pi} \Delta V(|z|)$. This theorem can also be extended to the multivariable case. We prove one particular example of this, the two-variable Kac ensemble, in Theorem \ref{T:2-Kac}. 
 
 \medskip

Note that again, the condition $\mathbb{E} \log(1 + |\xi_0|) < \infty$ on the random variables is the best possible, in the sense that if this condition fails, then almost sure convergence fails for the Kac ensemble (see \cite{IZ} for details).

\medskip

Kabluchko and Zaporozhets also considered random analytic functions of a similar form. Our methods can be easily extended to include this case, but we choose to only address random polynomials in this paper for ease of exposition.

\medskip

Again, our method for almost sure convergence is based on proving convergence of the function in \eqref{E:H_n-log}. The main obstacle is again in obtaining a lower bound on $\frac{1}n \log |G_n(z)|$. For proving almost sure convergence the Kolmogorov-Rogozin inequality is too weak, so a stronger concentration inequality is needed.

\medskip

 For this, we use a small ball probability theorem of Nguyen and Vu \cite{NV}. This gives a stronger concentration estimate than the Kolmogorov-Rogozin inequality for sums of the form $\sum_{i=1}^n \xi_i a_i$, where the $a_i$s are fixed and the $\xi_i$s are i.i.d. random variables. This stronger estimate requires that the coefficients $a_i$ are sufficiently spread out in the plane.

 \bigskip
 
 \noindent \textbf{Further work.} \qquad In a follow-up paper \cite{D}, the second author uses the small ball probability techniques of Section \ref{S:almost-sure} to prove almost sure convergence of the normalized counting measure of the zeros for general random orthogonal polynomials of the form \eqref{E:orthog-ensemble} under the condition \eqref{E:as-cond}. Necessity of this condition (as well the condition $\mathbb{P}(|\xi| > e^{|z|}) = o(|z|^{-1})$ for convergence in probability) is also proven in \cite{D}.
 
\section{Preliminaries}
In this section we recall some basic results in potential theory.

\medskip

\noindent Let $D\subset\mathbb{C}$ be an open set. A function $u$ on $D$ is \textit{subharmonic} if it
\begin{enumerate}[nosep, label = (\roman*)]
\smallskip

\item takes values in $[-\infty,+\infty)$.
\item is upper semicontinuous.
\item satisfies the submean inequality. That is, given $w\in D$, there exists $\rho>0$ such that
$$
u(w)\leq\frac{1}{2\pi}\int^{2\pi}_0u(w+re^{it})dt\quad (0\leq r<\rho).
$$
\end{enumerate}
We denote by $sh(D)$ the collection of subharmonic functions on $D$. If $f$ is analytic on $D$ then $\log |f|\in sh(D)$.

\medskip

A set $E\subset \mathbb{C}$ is \textit{polar} if there is a non-constant subharmonic function $u$ with $E\subset \{u=-\infty\}$. Subharmonic functions are locally integrable  and thus polar sets are of Lebesgue planar measure zero.

\medskip

For a function $f$ on $D$ we denote by $f^*$ its uppersemicontinuous regularization given by
$$
f^*(z):=\limsup_{w\rightarrow z}f(w).
$$
Let $\mathcal{P}_n$ denote the space of polynomials of degree $\leq n$. Let 
$$
\mathcal{L}(\mathbb{C})=\big\{u\in sh(\mathbb{C}) \big|\quad u(z)-\log |z| \quad\text {is bounded above as}\quad |z| \rightarrow +\infty \big\}.
$$
If $p\in\mathcal{P}_n$, then $\frac{1}{deg(p)}\log|p|\in\mathcal{L}(\mathbb{C}).$

\medskip

 For $K\subset\mathbb{C}$, the Green function of $K$ is given by
\begin{equation}
\label{E:V_K}
V_K(z):=\sup \left\{\frac{1}{deg(p)}\log |p(z)| \; \Big| \; p \;\; \text{is a non-constant polynomial, and}\quad ||p||_K \leq 1 \right\}.
\end{equation}
Whenever $K$ is non-polar, $V_K^*\in \mathcal{L}(\mathbb{C}).$ Note that 
$V_K=V_{\tilde{K}}$ where 
$$
\tilde{K} :=\{z:|p(z)|\leq ||p||_K \;\;\text{for all polynomials}\;\; p\}
$$
is the \textit{polynomially convex hull} of $K$. 

\medskip

We say that $K$ is \textit{regular} when $V_K$ is continuous, i.e. $V_K=V_K^*.$ This is equivalent to the unbounded component of the complement of $K$ being regular for the Dirichlet problem. For $K$ regular, $V_K=0$ for all $z\in \tilde{K}$. In this paper we will restrict to regular sets. $V_K$ is harmonic on $\mathbb{C}\setminus\tilde{K}$ and  has the asymptotic expansion
\begin{equation*}
V_K(z)=\log|z| -\log c(K)+o(1)
\end{equation*}
as $|z|\rightarrow \infty.$ Here $c(K)$ denotes the \text{logarithmic capacity} of $K.$

\begin{Example}
\label{Ex:kac}
Let $K=\{z:|z|=1\}$ be the unit circle in the plane. Then $K$ is regular and $V_K(z) =\max (0,\log |z|)$. The polynomially convex hull of $K$ is given by $\tilde{K}=\{z:|z|\leq 1\}.$
\end{Example}

The following theorem is from \cite{R} (Theorems 3.4.2 and 3.4.3).
\begin{theorem}\label{a}
Let $D\subset\mathbb{C}$ be open. Let $\{\psi_n(z)\}_{n=1,2,...}$ be a sequence in $sh(D)$ which is locally bounded above. Then 
$$ w(z):=(\limsup_n\psi_n(z))^* $$ and $$w_1(z):=(\sup_n\psi_n(z))^*$$
are subharmonic on $D$. Furthermore,
$w(z)=\limsup_n\psi_n(z)$ outside of a polar set,
and $w_1(z)=\sup_n\psi_n(z)$ outside a polar set.
\end{theorem}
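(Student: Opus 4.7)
My plan is to prove the result first for the pointwise supremum $w_1^*$, then deduce the $\limsup$ version by applying it to the tail suprema $\Phi_N:=\sup_{n\geq N}\psi_n$, whose pointwise decreasing limit is $\limsup_n\psi_n$.

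For subharmonicity of $w_1^*$, upper semicontinuity is built into the definition of $f^*$, so only the submean inequality needs verification. Fix $z_0\in D$ and choose $r>0$ with $\overline{B(z_0,r)}\subset D$ such that the $\psi_n$ are uniformly bounded above on this disk. For each $n$ and each $z$ in a smaller neighborhood of $z_0$, the submean inequality for $\psi_n$ combined with $\psi_n\leq w_1\leq w_1^*$ gives
$$\psi_n(z)\leq\frac{1}{2\pi}\int_0^{2\pi}w_1^*(z+re^{it})\,dt.$$
Taking $\sup_n$ on the left yields the same bound for $w_1(z)$; then taking $\limsup_{z\to z_0}$ produces $w_1^*(z_0)$ on the left, while on the right the reverse Fatou lemma (applicable because $w_1^*$ is upper semicontinuous and locally bounded above on the compact circle) lets the limsup pass inside the integral. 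This delivers the submean inequality for $w_1^*$. For $w^*$, each $\Phi_N^*$ is subharmonic by the same argument, the sequence $\Phi_N^*$ is pointwise non-increasing, and a decreasing limit of subharmonic functions is subharmonic; its limit $g$ satisfies $g\geq w^*$ and, by the polar-set claim below, $g=\limsup_n\psi_n$ off a polar set, whence $g=w^*$ everywhere.

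The essential content is the statement that $\{w_1^*>w_1\}$ is polar, i.e., Cartan's negligibility theorem; the version for $w$ then follows by applying it to each $\Phi_N$ and using that countable unions of polar sets are polar. I would prove this by contradiction: were the exceptional set non-polar, it would support a positive compactly supported Borel measure $\mu$ of finite logarithmic energy whose potential $p_\mu$ is continuous and bounded. Integrating the submean property for $w_1^*$ against $\mu$ via Fubini, and invoking the domination principle for logarithmic potentials against the $\psi_n$, forces $\int w_1^*\,d\mu=\int w_1\,d\mu$, contradicting $w_1^*>w_1+\varepsilon$ on a set of positive $\mu$-mass. This potential-theoretic step is the main obstacle; the rest is formal.
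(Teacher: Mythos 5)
The paper does not actually prove this theorem: it is quoted verbatim as Theorems~3.4.2 and 3.4.3 of Ransford's book (the upper-envelope theorem and the Brelot--Cartan theorem). Your proposal attempts a self-contained proof, and two of your three steps are sound. The reverse-Fatou argument correctly gives the submean inequality for $w_1^*$, with the small caveat that the paper's definition of subharmonicity only guarantees the submean inequality for small radii at each point, whereas you apply it at a radius $r$ fixed uniformly for $z$ near $z_0$; this needs the standard (but not definitional) fact that the local submean inequality implies it for every $r$ with $\overline{B(z,r)}\subset D$. Passing from $w_1^*$ to $w^*$ via the tail suprema $\Phi_N=\sup_{n\ge N}\psi_n$ also works: $\Phi_N^*$ is subharmonic for each $N$, the decreasing limit $g$ is subharmonic (or identically $-\infty$ on a component, a case that should be mentioned), $g\ge w^*$ by monotonicity of regularization, $g=w$ off the polar set $\bigcup_N\{\Phi_N^*>\Phi_N\}$, and two subharmonic functions agreeing outside a Lebesgue-null set agree everywhere.

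The genuine gap is in the step you yourself flag as the ``main obstacle'': the polar-set claim $\{w_1^*>w_1\}$ is polar, which is the entire content of the Brelot--Cartan theorem. Two things in your sketch do not go through as stated. First, the claim that a non-polar compact set supports a positive measure whose logarithmic potential is \emph{continuous and bounded} is itself a non-trivial theorem (a form of Evans' theorem); finite logarithmic energy only gives a.e.-finite potential, and the equilibrium potential need not be continuous at irregular points, so this needs its own proof or citation. Second, and more seriously, ``integrate the submean property against $\mu$ via Fubini, and invoke the domination principle against the $\psi_n$, forcing $\int w_1^*\,d\mu=\int w_1\,d\mu$'' is not yet an argument: it is unclear which inequality the Fubini step is supposed to yield, how a domination principle stated for a single subharmonic function is to be applied uniformly over the family $\{\psi_n\}$ to control $w_1=\sup_n\psi_n$, and why the resulting comparison would be an equality rather than a one-sided bound. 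Standard proofs of this theorem (Ransford~\S3.4, or Hayman--Kennedy) pass through a substantially more delicate analysis involving properties of potentials, thinness, or Choquet capacitability, none of which appears in your sketch. Until this step is filled in, the proposal does not constitute a proof of the stated theorem.
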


We present the following simple result without proof:
\begin{lemma}\label{usc}
 Let $f$ be upper semicontinuous and $g$ continuous on $D$ with $f\leq g.$ Suppose that $f=g$ at a dense set of points in $D.$ Then $f=g$ on $D.$
\end{lemma}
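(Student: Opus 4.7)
The plan is to fix an arbitrary point $z_0 \in D$ and show that $f(z_0) \geq g(z_0)$; combined with the hypothesis $f \leq g$, this will force equality at $z_0$, and hence on all of $D$.

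To do this, I would use the density hypothesis to choose a sequence $\{z_n\} \subset D$ converging to $z_0$ with $f(z_n) = g(z_n)$ for every $n$. Then I would apply upper semicontinuity of $f$ to obtain
$$
\limsup_{n \to \infty} f(z_n) \leq f(z_0),
$$
and continuity of $g$ to obtain $\lim_{n \to \infty} g(z_n) = g(z_0)$. Since $f(z_n) = g(z_n)$, the sequence $f(z_n)$ converges to $g(z_0)$, so
$$
g(z_0) = \lim_{n \to \infty} f(z_n) \leq f(z_0).
$$
Combined with the assumption $f(z_0) \leq g(z_0)$, this yields $f(z_0) = g(z_0)$.

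There is no real obstacle here; the argument is a direct, one-step application of the definitions of upper semicontinuity and continuity together with the density of the agreement set. The only thing to be careful about is ensuring that each $z_n$ can be chosen in $D$ (which is automatic, since the dense set lies in $D$ and $z_0 \in D$ is an interior point of $D$ so a suitable sequence exists within $D$).
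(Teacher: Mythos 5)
Your argument is correct: it is the standard direct proof via upper semicontinuity, continuity, and density, and it cleanly establishes $g(z_0) \leq f(z_0)$ from $\limsup_n f(z_n) \leq f(z_0)$ and $g(z_n) = f(z_n) \to g(z_0)$. The paper states this lemma without proof, explicitly calling it a simple result, and the proof it has in mind is exactly the one you give.
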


Let $L^1_{loc}(D)$ denote the space of locally integrable functions on $D$. The next theorem gives conditions for a sequence of subharmonic functions to converge in $L^1_{loc}(D)$.

\begin{theorem}
\label{b}
(see also \cite{BL}, Proposition 4.4).
Let $D\subset\mathbb{C}$ be open. Let $\{\psi_n(z)\}_{n=1,2,\dots}$ be a sequence in $sh(D)$ which is locally bounded above and let $w(z)\geq (\limsup_n\psi_n(z))^*$.
Suppose that $w$ is continuous and that there is a countable dense set of points \\
$\{z_i\}_{i\in \mathbb{N}} \subset D$ such that 
$$\lim_{n \to \infty} \psi_n(z_i)=w(z_i) \quad \text{ for all } i \in \mathbb{N}.$$
Then $\psi_n \rightarrow w$
in $L^1_{loc}(D)$.
\end{theorem}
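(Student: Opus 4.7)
The plan is to deduce $L^1_{loc}$ convergence of the full sequence from the fact that every subsequence of $\{\psi_n\}$ admits a further $L^1_{loc}$-convergent subsequence with limit $w$. So let $\{\psi_{n_k}\}$ be an arbitrary subsequence. By the standard compactness property of subharmonic functions that are locally uniformly bounded above (a Hartogs-type result), there is a further subsequence $\{\psi_{m_j}\}$ that either converges in $L^1_{loc}(D)$ to some $u \in sh(D)$, or tends to $-\infty$ in $L^1_{loc}$ on some connected component of $D$.

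First step: rule out the $-\infty$ alternative. The submean inequality, integrated radially, gives the familiar bound
$$
\int_{B(z_i, R)} \psi_n \, dA \;\geq\; \pi R^2 \, \psi_n(z_i),
$$
valid on any $B(z_i, R) \subset D$. Since $\psi_n(z_i) \to w(z_i) > -\infty$, the left side stays bounded below; because $\{z_i\}$ is dense, every component of $D$ meets some $z_i$, ruling out $L^1_{loc}$ divergence to $-\infty$. Hence $\psi_{m_j} \to u$ in $L^1_{loc}$ with $u \in sh(D)$.

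Second step: identify $u = w$. Applying the submean inequality on a ball about an arbitrary $z_0 \in D$, taking $\limsup_j$, and using $L^1_{loc}$ convergence of $\psi_{m_j}$ on that ball yields
$$
\limsup_{j \to \infty} \psi_{m_j}(z_0) \;\leq\; \frac{1}{\pi r^2} \int_{B(z_0, r)} u \, dA
$$
for each small $r > 0$; letting $r \to 0$ and using that $u$ is subharmonic gives $\limsup_j \psi_{m_j}(z_0) \leq u(z_0)$ at every $z_0 \in D$. Evaluated at $z_i$ where $\psi_{m_j}(z_i) \to w(z_i)$, this gives $u(z_i) \geq w(z_i)$ on the dense set. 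For the reverse inequality, extract an a.e.\ pointwise convergent sub-subsequence $\psi_{m_{j_l}} \to u$; combined with $\limsup_l \psi_{m_{j_l}} \leq \limsup_n \psi_n$ and the hypothesis $(\limsup_n \psi_n)^* \leq w$, together with the fact that two subharmonic functions comparable a.e.\ are comparable everywhere (compare their ball averages), we obtain $u \leq w$ throughout $D$. Thus $u \leq w$ everywhere with equality on $\{z_i\}$, and since $u$ is upper semicontinuous while $w$ is continuous, Lemma \ref{usc} forces $u = w$ on $D$, completing the subsequential argument.

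The main obstacle is the first step: excluding the $-\infty$ alternative using only pointwise information at a countable dense set. The subharmonic bound relating a single-point value to a ball average is precisely what closes this gap. The remainder is a careful but standard comparison of $L^1_{loc}$ limits, pointwise $\limsup$s, and their USC regularizations, packaged by Lemma \ref{usc}.
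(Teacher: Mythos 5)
Your proof is correct, and while it reaches the same place, it routes around the paper's main technical tool. Both arguments rest on the Hörmander compactness theorem for locally-bounded-above families in $sh(D)$, both extract a.e.\ convergent sub-subsequences, and both close by feeding a USC function and the continuous $w$ into Lemma~\ref{usc}. The paper organizes this as a proof by contradiction and identifies the $L^1$ limit using Theorem~\ref{a} twice: first to show that for every subsequence $J$ the regularization $w_J=(\limsup_{n\in J}\psi_n)^*$ is subharmonic (whence $w_J=w$ by Lemma~\ref{usc}), and second to transfer this to the a.e.\ limit via the statement that $w_J$ equals the raw $\limsup$ off a polar, hence Lebesgue-null, set. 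You instead use the metric-space subsequence criterion directly and never touch Theorem~\ref{a} or polar sets: the lower bound $u(z_i)\ge w(z_i)$ comes from the submean inequality plus $L^1$-continuity of ball averages, and the upper bound $u\le w$ comes from passing $(\limsup_n\psi_n)^*\le w$ through the a.e.\ pointwise limit and then regularizing by ball averages. Your version is more self-contained in this respect. You also explicitly dispose of the $-\infty$ alternative in the compactness theorem using the boundedness of $\psi_n(z_i)$, a point the paper's proof glosses over (it simply asserts an $L^1(B)$ limit exists) even though the same observation is needed to justify that assertion. One small imprecision: you cite the ``fact that two subharmonic functions comparable a.e.\ are comparable everywhere,'' but $w$ is only assumed continuous, not subharmonic. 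The ball-average comparison you indicate in the parenthetical is the argument that actually applies—it needs only subharmonicity of $u$ and continuity of $w$—so the conclusion is unaffected, but the cited fact should be replaced by that direct computation.
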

\begin{proof}
The first step is to show that for any subsequence $J\subset\mathbb{N}$ and any $z \in D$, that 
$$
(\limsup_{n\in J}\psi_n(z))^*=w(z).
$$
To this end let $J\subset\mathbb{N}$ 
be a subsequence.
By Theorem \ref{a},
$$
w_J:=(\limsup_{n\in J}\psi_n(z))^*
$$
is subharmonic on D. By Lemma \ref{usc}, $w_J=w$ on $D.$ This completes the first step.

\medskip

   Next we proceed by contradiction to prove the theorem. Suppose that the conclusion of the theorem does not hold. Then there exists a closed ball $B\subset D$ and $\epsilon> 0$ such that for some subsequence $J_1\subset\mathbb{N}$ we have, for $n\in J_1$,
   
   \begin{equation}\label{nonconv}
   ||\psi_n-w||_{L^1(B)}\geq\epsilon.
   \end{equation}
   
    However, appealing to Theorem 3.2.12 of \cite{H}, there is a subsequence $J_2\subset J_1$ and $g\in L^1(B)$ with $\lim_{n\in J_2}\psi_n=g $ in $L^1(B)$. It follows from standard measure theory that there is a further subsequence $J_3\subset J_2$ with $\lim_{n\in J_3}\psi_n(z) =g(z)$ for a.e. $z\in B$  so that $g(z) = w_{J}(z) = w(z)$ a.e. in $B$. This contradicts (\ref{nonconv}).
\end{proof}

We remark that $L^1_{loc}(D)$ may be endowed with a metric as follows:

\begin{remark}
Let $L^1_{loc}(D)$ denote the space of functions locally in $L^1$ on an open set $D\subset\mathbb{C}.$ The space $L^1_{loc}(D)$ is a metric space as follows: let $X_1,X_2,...$ be a sequence of compact subsets of $D$ with $\cup_{i=1}^{\infty}X_i=D, X_i\subset X_{i+1}$ for all $i$. For $f,g \in L^1_{loc}(D)$ set
$$
\rho(f,g):=\sum_{i=1}^{\infty}2^{-i}\min[1,||f-g||_{L^1(X_i)}].
$$

\end{remark}
\section{Construction of Random Polynomials}
\label{S:construct}

We will construct random polynomials by ``randomizing" linear combinations of orthogonal polynomials. 
We consider random polynomials of the form
\begin{equation}\label{r1}
H_n(z):=\sum_{j=0}^n\xi_jq_j(z)
\end{equation}
where the $\xi_j$ are i.i.d complex-valued random variables, and the $\{q_j(z)\}$ are orthonormal polynomials constructed below.

\medskip

To emphasize the randomness we will sometimes use the notation
$H_n(z,\omega)$ where $\omega\in\Omega$ and the i.i.d random variables $\xi_i$ are defined on a probability space $(\Omega,\mathcal{F},\mathbb{P})$.

\medskip

 Let $K$ be a compact and regular subset of $\mathbb{C}.$ We construct the polynomials $q_j(z)$ as follows:
 
 \medskip

Let $\tau$ be a finite measure on $K.$ Apply the Gram-Schmidt orthogonalization procedure to the monomials $\{1,z,\dots,z^j\}$
in $L^2(\tau)$ for $j=0,1,\dots$ to obtain a sequence of polynomials $\{q_0, q_1, \dots\}$.
Assume that $\tau$ satisfies the Bernstein-Markov property (see \cite{BS}).
That is, for all $\epsilon >0,$ there is an $M>0$ such that for all $p\in\mathbb{P}_n$ we have
\begin{equation}
\label{E:p-K}
||p||_K\leq Me^{\epsilon n}||p||_{L^2(\tau)}.
\end{equation}
Then we have the following convergence result for every $z \in \mathbb{C}$ (from \cite{BS}):
\begin{equation}
\label{ortho}
    V_K(z)=\lim_{n\rightarrow\infty}\frac{1}{2n}\log\sum_{j=0}^n|q_j(z)|^2.
\end{equation}
Furthermore, since $K$ is regular, \eqref{ortho} holds locally uniformly. It follows from \eqref{E:p-K} that given $\epsilon>0$ there is an  $M>0$ such that, for all $j\in\mathbb{N},$ we have that  
$$
||q_j(z)||_K\leq Me^{\epsilon j},
$$
and so
\begin{equation}\label{BM}
|q_j(z)|\leq Me^{n(V_K(z)+\epsilon)}, \qquad \text{ for all $j \in\{0, 1, \dots, n\} , z \in \mathbb{C}$.}
\end{equation}
We consider random variables $\xi_0, \xi_1, \dots$ satisfying
\begin{equation}
\label{meas}
\mathbb{P}(|\xi_0|>e^{|z|})=o(|z|^{-1}).
\end{equation}
The lemma below shows that the above condition is sufficient to get an upper bound in probability on $\frac{1}n \log |H_n|$. Theorem \ref{prob} shows that to prove convergence of the roots of $H_n$, it then suffices to get a pointwise lower bound on this function on a dense set.
\begin{lemma}
\label{L:prob-bd}
Let $\xi_0,\xi_1,\dots$ be i.i.d. random variables satisfying (\ref{meas}), and let $H_n(z,\omega)$ be the random polynomials given by \eqref{r1}.

For any subsequence $Y\subset\mathbb{N}$, there is a further subsequence $Y_1\subset Y$ such that almost surely, the family $\{\frac{1}{n}\log |H_n| : n \in Y_1\}$ is locally bounded above, and such that for all $z\in\mathbb{C}$,
\begin{equation}
\label{E:n-Y1}
\limsup_{n\in Y_1} \frac{1}{n}\log |H_n(z,\omega)|\leq V_K(z).
\end{equation}

\end{lemma}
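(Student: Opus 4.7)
The plan is to combine the tail hypothesis \eqref{meas} on the coefficients $\xi_j$ with the Bernstein-Markov bound \eqref{BM} on the orthonormal polynomials. The first yields control on $\max_{0 \le j \le n}|\xi_j|$ along a suitably chosen subsequence, and a crude triangle inequality applied to $H_n=\sum_j \xi_j q_j$ then converts this into the desired pointwise upper bound by $V_K$.

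To extract $Y_1$, fix an integer $\ell \ge 1$. By \eqref{meas} and a union bound,
$$
\mathbb{P}\Bigl(\max_{0 \le j \le n}|\xi_j| > e^{n/\ell}\Bigr)
  \le (n+1)\,\mathbb{P}\bigl(|\xi_0|>e^{n/\ell}\bigr)
  = (n+1)\cdot o(\ell/n) \longrightarrow 0 \quad (n\to\infty).
$$
So I can pick $m_1<m_2<\cdots$ in $Y$ with the $\ell$-th probability, evaluated at $n=m_\ell$, less than $2^{-\ell}$. Set $Y_1=\{m_\ell\}$. Borel-Cantelli then implies that almost surely, for each $\ell$ the bound $\max_{0 \le j \le m_\ell}|\xi_j| \le e^{m_\ell/\ell}$ holds for all sufficiently large $\ell$; letting $\ell\to\infty$ along this single almost sure event yields
$$
\limsup_{n \in Y_1} \frac{1}{n}\log \max_{0 \le j \le n}|\xi_j| \le 0.
$$

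Now fix any $\omega$ in this full-measure event. For $\epsilon>0$ let $M=M_\epsilon$ be the constant from \eqref{BM}. The triangle inequality gives
$$
|H_n(z,\omega)| \le (n+1)\,M\,\max_{0 \le j \le n}|\xi_j(\omega)| \cdot e^{n(V_K(z)+\epsilon)},
$$
and taking logs, dividing by $n$, and invoking the previous step shows $\limsup_{n \in Y_1} \tfrac{1}{n}\log|H_n(z,\omega)| \le V_K(z)+\epsilon$ for every $z\in\mathbb{C}$. Letting $\epsilon\downarrow 0$ (no further null set is needed, since the $\omega$-event above is $\epsilon$-free) gives \eqref{E:n-Y1}. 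Local boundedness is immediate from the same estimate: on any compact $E\subset\mathbb{C}$, continuity of $V_K$ (by regularity of $K$) furnishes $C_E$ with $V_K \le C_E$ on $E$, so $\sup_{z\in E}\tfrac{1}{n}\log|H_n(z)| \le C_E+\epsilon+o(1)$ almost surely for large $n\in Y_1$, with the finitely many remaining indices handled trivially.

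The only genuine obstacle is the borderline nature of \eqref{meas}: a stronger decay $O(|z|^{-1-\delta})$ would make $\sum_n \mathbb{P}(|\xi_n|>e^{\epsilon n})$ summable and permit a Borel-Cantelli argument along all of $\mathbb{N}$, whereas under just $o(|z|^{-1})$ one is forced to pass to a subsequence. The diagonal trick of using a slowly shrinking rate $e^{n/\ell}$ in the $\ell$-th Borel-Cantelli step is what simultaneously handles all required $\epsilon$.
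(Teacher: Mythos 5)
Your proof is correct and follows essentially the same route as the paper's: derive tail control on $\max_{j \le n}|\xi_j|$ from \eqref{meas}, pass to a subsequence via Borel--Cantelli, and combine with the Bernstein--Markov bound \eqref{BM} before letting $\epsilon \downarrow 0$. The one genuine improvement in your write-up is that you make the diagonalization over shrinking rates $e^{n/\ell}$ explicit when choosing $Y_1$, so that a single subsequence and a single null set work simultaneously for all $\epsilon>0$; the paper leaves this step implicit by fixing $\epsilon$ before choosing $Y_1$.
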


\begin{proof}
It follows from (\ref{meas}) that for every $\epsilon >0$
we have
\begin{equation}
\label{E:meas-2}
\mathbb{P}(|\xi|>e^{\epsilon |z|})=o(|z|^{-1}).
\end{equation}
Letting $\Omega_{n,\epsilon}=\{\omega\in\Omega:|\xi_i(\omega)|\leq e^{\epsilon n}\quad\text{for}\quad i=0,\dots,n\},$ the asymptotics in \eqref{E:meas-2} implies that
$$
\mathbb{P}(\Omega^c_{n,\epsilon})\rightarrow 0 \qquad \text{as $n\rightarrow\infty.$}
$$
Thus, given a subsequence $Y\subset\mathbb{N}$ there is a further subsequence $Y_1\subset Y$, $Y_1= \{n_1,n_2,\dots \}$ such that
$$
\sum_{s=0}^{\infty}\mathbb{P}(\Omega^c_{n_s,\epsilon})<\infty.
$$
Therefore by the Borel-Cantelli lemma, for every $\epsilon > 0$ we can almost surely find an $s_0 \in \mathbb{N}$ such that for every $s \ge s_0$, we have
$$
|\xi_i(\omega)|\leq e^{\epsilon n_s} \qquad \text{ for every $i \in \{0, \dots n_s\}$.}
$$
Therefore for any $\epsilon > 0$, \eqref{BM} implies that almost surely,
$$
\limsup_{s \to \infty} \frac{1}{n_s} \log\left(\sum_{i=0}^{n_s}|\xi_iq_i(z)| \right) \le V_K(z) + \epsilon \qquad \text{ for every $z \in \mathbb{C}$}.
$$
Letting $\epsilon$ tend to $0$, and observing that $|H_n(\omega,z)| \le \sum_{i=0}^{n_s}|\xi_iq_i(z)|$ completes the proof of \eqref{E:n-Y1}. The fact that the sequence $\{\frac{1}{n}\log |H_n| : n \in Y_1\}$ is locally bounded above follows since the convergence in \eqref{ortho} is locally uniform.
\end{proof}

By imposing a stronger condition on the $\xi_i$, we can get an almost sure upper bound on $\frac{1}n \log|H_n|$.

\begin{lemma}
\label{L:as-bd}
Let $\xi_0,\xi_1,\dots$ be non-degenerate i.i.d. random variables satisfying 
\begin{equation}\label{upper}
\mathbb{E}\log(1+|\xi_0|)<\infty
\end{equation} 
 Then almost surely,
$$
\limsup_{n \to \infty} \frac{1}{n}\log |H_n(\omega,z)|\leq V_K(z) \quad \text{ for all $z \in \mathbb{C}$.}
$$ 
\end{lemma}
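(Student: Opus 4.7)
The plan is to mirror the argument of Lemma \ref{L:prob-bd}, but replace the subsequence-Borel-Cantelli step with a direct Borel-Cantelli argument applied to the full sequence, which is exactly what the stronger moment hypothesis \eqref{upper} buys us. Specifically, for any fixed $\epsilon > 0$, the layer-cake formula gives
$$
\sum_{i=1}^{\infty} \mathbb{P}\bigl(|\xi_i| > e^{\epsilon i}\bigr) = \sum_{i=1}^{\infty} \mathbb{P}\bigl(\log(1+|\xi_0|) > \epsilon i\bigr) \le \tfrac{1}{\epsilon}\,\mathbb{E}\log(1+|\xi_0|) < \infty,
$$
so by the first Borel-Cantelli lemma, almost surely there exists a (random) index $i_0=i_0(\omega,\epsilon)$ such that $|\xi_i(\omega)| \le e^{\epsilon i}$ for every $i \ge i_0$.

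On this almost sure event I would then split the polynomial as
$$
|H_n(z,\omega)| \;\le\; \sum_{i=0}^{i_0-1}|\xi_i(\omega)|\,|q_i(z)| \;+\; \sum_{i=i_0}^{n} e^{\epsilon i}\,|q_i(z)|,
$$
and apply the Bernstein-Markov consequence \eqref{BM} in the form $|q_i(z)| \le M_\epsilon e^{i(V_K(z)+\epsilon)}$ (valid for every $i$, obtained from \eqref{BM} with $n=i$). The first sum is a finite $\omega$-dependent constant, and the second is bounded by
$$
M_\epsilon \sum_{i=i_0}^{n} e^{i(V_K(z)+2\epsilon)} \;\le\; C_{z,\epsilon}\,e^{n(V_K(z)+2\epsilon)},
$$
since $V_K(z) \ge 0$ and hence $V_K(z)+2\epsilon > 0$, so the geometric series is dominated by its last term up to a constant depending on $z$ and $\epsilon$. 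Dividing by $n$, taking $\limsup$, and letting $\epsilon\to 0$ along a countable sequence $\epsilon_k\downarrow 0$ yields $\limsup_n \frac{1}{n}\log|H_n(z,\omega)| \le V_K(z)$ for every $z\in\mathbb{C}$, on a single almost sure event (the countable intersection of the events corresponding to each $\epsilon_k$).

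No step presents a serious obstacle; the only mild subtleties are (i) making sure the almost sure event on which the $\xi_i$ growth bound holds does not depend on $z$, so that a single $\omega$-set works simultaneously for all $z$, which is automatic because the bound $|\xi_i(\omega)|\le e^{\epsilon i}$ is independent of $z$; and (ii) handling the boundary case $V_K(z)=0$, for which $V_K(z)+2\epsilon = 2\epsilon>0$ still makes the geometric series argument go through.
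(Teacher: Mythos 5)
Your proof is correct and takes essentially the same approach as the paper: use $\mathbb{E}\log(1+|\xi_0|)<\infty$ to get summability of $\mathbb{P}(|\xi_n|>e^{\epsilon n})$, apply Borel–Cantelli directly to the full sequence to obtain an eventual bound $|\xi_n|\le e^{\epsilon n}$ almost surely, and then run the Bernstein–Markov estimate as in Lemma \ref{L:prob-bd}. The paper states this more tersely (it packages the Borel–Cantelli conclusion as $|\xi_n|<Ce^{\epsilon n}$ for a random constant $C$ and then defers to the reasoning of Lemma \ref{L:prob-bd}), but the content and key steps are identical to what you wrote out.
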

\begin{proof}
Condition (\ref{upper}) implies that
$$
\sum_{n=0}^{\infty}\mathbb{P}(|\xi_n| > e^{\epsilon n} )<\infty \qquad \text{for every } \quad \epsilon > 0.
$$
Therefore for any $\epsilon > 0$, the Borel-Cantelli lemma implies that there exists a random constant $C$ such that almost surely
$$
|\xi_n| < Ce^{\epsilon n} \qquad \text{for all $n$}.
$$
The lemma then follows by similar reasoning to that used in the proof of Lemma \ref{L:prob-bd}.
\end{proof}

We will use the following lemma, presented without proof. 

 \begin{lemma}\label{seq}
 Let $a_o,a_1,\dots.$ be a sequence of non-zero complex numbers and $A\geq 0.$
 The following equations are equivalent:
 \begin{align*}
 (i) \lim_{n\rightarrow\infty} &\frac{1}{n}\log (\max^n_{i=0}|a_i|)=A. \\
  (ii)  \lim_{n\rightarrow\infty}&\frac{1}{n}\log (\sum^n_{i=0}|a_i|)=A. \\
  (iii) \lim_{n\rightarrow\infty}&\frac{1}{2n}\log (\sum^n_{i=0}|a_i|^2)=A.
 \end{align*}
\end{lemma}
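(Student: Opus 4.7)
The plan is to squeeze each of the three quantities between the same pair of bounds involving the maximum $M_n := \max_{0\le i \le n}|a_i|$, and then use the fact that $\frac{1}{n}\log(n+1) \to 0$ to absorb the polynomial factor. Setting $S_n := \sum_{i=0}^n |a_i|$ and $T_n := \sum_{i=0}^n |a_i|^2$, the whole argument rests on the two elementary chains
\[
M_n \;\le\; S_n \;\le\; (n+1)\,M_n, \qquad M_n^2 \;\le\; T_n \;\le\; (n+1)\,M_n^2,
\]
both of which follow immediately from the definitions (lower bound: a single term; upper bound: at most $n+1$ terms, each $\le M_n$ or $\le M_n^2$).

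For the equivalence of (i) and (ii), I would take $\frac{1}{n}\log$ of the first chain to obtain
\[
\frac{1}{n}\log M_n \;\le\; \frac{1}{n}\log S_n \;\le\; \frac{1}{n}\log M_n + \frac{\log(n+1)}{n},
\]
and then note that the additive error $\frac{\log(n+1)}{n}$ vanishes as $n \to \infty$. Hence $\frac{1}{n}\log M_n$ converges to some limit $A \ge 0$ if and only if $\frac{1}{n}\log S_n$ does, and the limits agree. The assumption that the $a_i$ are nonzero guarantees $M_n > 0$ so that the logarithms are well-defined, and $A \ge 0$ is natural because $M_n$ is nondecreasing and hence bounded below by $|a_0| > 0$ for all sufficiently large $n$.

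For the equivalence of (i) and (iii), I would instead take $\frac{1}{2n}\log$ of the second chain, yielding
\[
\frac{1}{n}\log M_n \;\le\; \frac{1}{2n}\log T_n \;\le\; \frac{1}{n}\log M_n + \frac{\log(n+1)}{2n},
\]
and apply the same squeeze. There is no real obstacle to this lemma: it is purely a bookkeeping matter of comparing the $\ell^\infty$, $\ell^1$, and $\ell^2$ norms of the finite sequence $(a_0,\ldots,a_n)$ on the logarithmic scale, exploiting that the ratios between these norms are at most polynomial in $n$ and therefore invisible after dividing the logarithm by $n$.
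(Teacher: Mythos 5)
Your proof is correct. The paper in fact states this lemma explicitly ``without proof,'' so there is no in-paper argument to compare against; the squeeze $M_n \le S_n \le (n+1)M_n$ and $M_n^2 \le T_n \le (n+1)M_n^2$, followed by taking $\tfrac{1}{n}\log$ (resp.\ $\tfrac{1}{2n}\log$) and observing that $\tfrac{\log(n+1)}{n}\to 0$, is exactly the standard way to fill the gap and handles both directions of each equivalence simultaneously. Your side remark that $A\ge 0$ is forced when the limit exists (since $M_n\ge|a_0|>0$, so $\liminf_n\tfrac{1}{n}\log M_n\ge 0$) is also correct, though not strictly needed since $A\ge 0$ is a hypothesis of the lemma.
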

  Note if we set  $a_j=q_j(z)$ at any point $z$ where none of the polynomials $q_j(z)$ are zero, then \eqref{ortho} shows that each of the above conditions are met.

 \section{Zeros of random polynomials}
We are concerned with the zeros of random polynomials of the form (\ref{r1}). We will prove, under appropriate circumstances, the weak* convergence as $n$ approaches infinity of the normalized counting measure of the zeros  to the equilibrium measure of $K$.

Given a compact non-polar set $K \subset \mathbb{C}$,  the equilibrium measure $\mu_K$ is defined as the unique probability measure which minimizes over probability measures $\mu$ on $K$, the functional (\cite{ST}, Theorem I.3)
\begin{equation}\label{equ}
\int\int\log\frac{1}{|z-t|}d\mu(z)d\mu(t).
\end{equation}
It may also be characterized by (see  \cite{ST}, Appendix B, Lemma 2.4)
\begin{equation}\label{eqgreen}
    \mu_K=\frac{1}{2\pi}\Delta V_K^*
\end{equation}
where $\Delta$ denotes the Laplacian and the equation is in the sense of distributions.

\medskip

Now, if $p_n(z)$ is a polynomial of degree $n$, the normalized counting measure of its zeros (counting multiplicity) is given by
\begin{equation} 
\frac{1}{2\pi}\Delta \left(\frac{1}{n}\log |p_n| \right)=\frac{1}n \sum_{j=1}^n\delta (z_j),
\end{equation}
where $z_1,z_2,\dots, z_n$ are the zeros of $p_n$ and $\delta(z)$
denotes the Dirac-delta measure at $z$.

\medskip

We will use the notation $Z_{H_n}$ to denote the normalized counting measure of the zeros of the random polynomial $H_n$. That is,
\begin{equation}
Z_{H_n}=\frac{1}{2\pi}\Delta\left(\frac{1}{n}\log|H_n|\right).
\end{equation}

\begin{theorem}\label{as}
Let $H_n(z,\omega)$ be a sequence of random polynomials of the form (\ref{r1}). Suppose that the following hypotheses are satisfied:
\smallskip
\begin{enumerate}[nosep,label = (\roman*)]
\item Almost surely the sequence $\{\frac{1}{n}\log |H_n| : n \in \mathbb{N} \}$ is locally bounded above.
\item Almost surely we have
$$
\limsup_n \frac{1}{n}\log |H_n(z,\omega)|\leq V_K(z) \qquad \text{ for every $z \in \mathbb{C}$}.
$$
\item For each point $z_i$ of a   countable dense set of points $\{z_i\}_{i\in \mathbb{N}} \subset \mathbb{C}$, we  have
$$
\lim_{n \to \infty} \frac{1}{n}\log|H_n(z_i,\omega)|=V_K(z_i) \qquad \text{ almost surely}.
$$
\end{enumerate}
Then
$$
\lim_{n \to \infty} Z_{H_n}=\mu_K \qquad \text{weak* almost surely}.
$$
\end{theorem}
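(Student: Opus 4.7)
The plan is to reduce weak* convergence of the zero-counting measures to $L^1_{loc}(\mathbb{C})$ convergence of the associated subharmonic potentials, and then invoke Theorem \ref{b}. The bridge is the identity $Z_{H_n} = \frac{1}{2\pi}\Delta\bigl(\frac{1}{n}\log |H_n|\bigr)$ together with $\mu_K = \frac{1}{2\pi}\Delta V_K$, where we use regularity of $K$ so that $V_K^* = V_K$. Since $\Delta$ is continuous from distributions to distributions and $L^1_{loc}(\mathbb{C})$ embeds continuously into $\mathcal{D}'(\mathbb{C})$, convergence of the potentials in $L^1_{loc}$ implies distributional, hence weak*, convergence of the measures.

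The first step is routine null-set bookkeeping. The events of probability one coming from (i), from (ii), and from (iii) for each fixed $z_i$ can be intersected: as $\{z_i\}$ is countable, the intersection
\[
\Omega_0 \;:=\; \Omega_{(\mathrm{i})} \cap \Omega_{(\mathrm{ii})} \cap \bigcap_{i\in\mathbb{N}} \Omega_{(\mathrm{iii}),\,i}
\]
still has probability one, and on $\Omega_0$ all three hypotheses hold simultaneously. This countability is the one place where I would take care, since it is what prevents the argument from failing against an uncountable dense set.

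Fix $\omega \in \Omega_0$ and set $\psi_n(z) := \frac{1}{n}\log |H_n(z,\omega)|$ and $w(z) := V_K(z)$. Each $\psi_n \in sh(\mathbb{C})$ since $H_n$ is a polynomial; the family $\{\psi_n\}$ is locally bounded above by (i); and $w$ is continuous by the regularity of $K$. Hypothesis (ii) gives $\limsup_n \psi_n \leq V_K$ pointwise, and since $V_K$ is continuous (hence upper semicontinuous), the usc regularization preserves this inequality, yielding $(\limsup_n \psi_n)^* \leq V_K = w$, which is what Theorem \ref{b} requires. Hypothesis (iii) provides the pointwise convergence $\psi_n(z_i) \to w(z_i)$ on the countable dense set. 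Thus all hypotheses of Theorem \ref{b} are in force, so $\psi_n \to V_K$ in $L^1_{loc}(\mathbb{C})$ for every $\omega \in \Omega_0$.

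Applying $\frac{1}{2\pi}\Delta$ to this $L^1_{loc}$ convergence, we obtain $Z_{H_n} \to \mu_K$ in the sense of distributions on $\mathbb{C}$; since both sides are positive Borel measures of total mass one and of uniformly compact distributional support up to small corrections from the potential estimates, this is precisely weak* convergence in the required sense. I do not expect any substantive obstacle: the content of the theorem is an almost sure packaging of Theorem \ref{b} in the random setting, with the genuine probabilistic work deferred to verifying the hypotheses (i)--(iii) in later sections.
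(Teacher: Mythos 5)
Your proposal is correct and follows essentially the same route as the paper: intersect the countably many probability-one events from (i)--(iii), fix $\omega$ in that set, apply Theorem \ref{b} to get $\frac{1}{n}\log|H_n(\cdot,\omega)| \to V_K$ in $L^1_{loc}(\mathbb{C})$, then push through $\frac{1}{2\pi}\Delta$ to get distributional and hence weak* convergence of the counting measures. One small caution: your closing remark justifying the passage from distributional convergence to weak* convergence of probability measures (``uniformly compact distributional support up to small corrections'') is vaguely phrased and not literally true --- the zeros of $H_n$ need not lie in any fixed compact set. The standard justification is that both $\frac{1}{n}\log|H_n|$ and $V_K$ lie in the class $\mathcal{L}(\mathbb{C})$ (behaving like $\log|z| + O(1)$ at infinity), which forces tightness of the measures $Z_{H_n}$, so distributional convergence to a probability measure upgrades to weak* convergence; the paper is equally terse on this point, so this does not constitute a gap relative to the reference proof.
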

\begin{proof}
Almost surely in $\Omega$, we have that
 $$\lim_{n \to \infty} \frac{1}{n}\log|H_n(z_i,\omega)|=V_K(z_i)$$
 for all $z_i$. Applying Theorem \ref{b} gives that
  $$
  \lim_{n \to \infty} \frac{1}{n}\log|H_n(z,\omega)|=V_K(z),
  $$
  in $L^1_{loc}(D)$. In particular, the convergence holds in the sense of distributions. Applying $\frac{1}{2\pi}\Delta$ to both sides
  we have 
  $$\lim_{n \to \infty} Z_{H_n}=\mu_K$$ almost surely as distributions, and therefore in the weak* topology on probability measures.
\end{proof}

\begin{theorem}
\label{prob}
Let $H_n(z,\omega)$ be a sequence of random polynomials of the form (\ref{r1}). Suppose the following hypotheses are satisfied:

\smallskip

\begin{enumerate}[nosep, label=(\roman*)]
\item For any subsequence $Y\subset\mathbb{N}$
there is a further subsequence $Y_0\subset Y$ such that almost surely, the set $\{\frac{1}{n}\log |H_n| : n \in Y_0\}$ is locally bounded above, and 
$$
\limsup_{n\in Y_0} \frac{1}{n}\log |H_n(z,\omega)|\leq V_K(z) \qquad \text{ for all $z \in \mathbb{C}$}.
$$

\item For each of a countable dense set of points $\{z_i\}_{i\in \mathbb{N}} \subset \mathbb{C}$ we  have that
$$
\lim_{n \to \infty} \frac{1}{n}\log |H_n(z_i,\omega)|=V_K(z_i) \quad \text{ in probability}.
$$
\end{enumerate}
Then
$$
Z_{H_n} \xrightarrow[n \to \infty]{} \mu_K \qquad \text{ in probability }
$$
in the weak* topology on probability measures on $\mathbb{C}$. That is, for any open set $U$ in the weak* topology containing $\mu_K$, we have that
$$
\mathbb{P} \left(Z_{H_n} \in U \right) \to 1 \qquad \text{ as } n \to \infty.
$$

\end{theorem}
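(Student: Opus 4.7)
The plan is to reduce the theorem to its almost sure counterpart, Theorem \ref{as}, via the standard subsequence principle: in any metrizable space, a sequence of random elements converges to a limit in probability if and only if every subsequence admits a further sub-subsequence along which convergence is almost sure. Since the weak* topology on probability measures on $\mathbb{C}$ is metrizable (for instance by the Levy-Prokhorov metric, once one notes that Lemma \ref{L:prob-bd}--type upper bounds force the relevant measures to sit inside a fixed compact set with probability tending to one), it suffices to show that any subsequence $Y\subset \mathbb{N}$ has a further sub-subsequence $Y_1\subset Y$ along which $Z_{H_n}\to \mu_K$ weak* almost surely.

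Fix such a $Y$. First I would apply hypothesis (i) to extract a sub-subsequence $Y_0\subset Y$ such that, almost surely, the family $\{\tfrac{1}{n}\log|H_n|:n\in Y_0\}$ is locally bounded above and
$$
\limsup_{n\in Y_0}\tfrac{1}{n}\log|H_n(z,\omega)| \leq V_K(z) \qquad \text{for all } z\in\mathbb{C}.
$$
Next, since convergence in probability along $\mathbb{N}$ implies convergence in probability along $Y_0$, hypothesis (ii) gives, for each $z_i$, a sub-subsequence of $Y_0$ along which $\tfrac{1}{n}\log|H_n(z_i)|\to V_K(z_i)$ almost surely. A standard diagonal extraction then produces a single sub-subsequence $Y_1\subset Y_0$ such that, on an event of probability one,
$$
\lim_{n\in Y_1}\tfrac{1}{n}\log|H_n(z_i,\omega)|=V_K(z_i) \qquad \text{for every } i\in\mathbb{N}
$$
simultaneously; here I use that a countable intersection of full-measure events is still full-measure.

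Having obtained $Y_1$, I would invoke Theorem \ref{b} with $\psi_n=\tfrac{1}{n}\log|H_n|$ (along $n\in Y_1$) and $w=V_K$. The function $V_K$ is continuous because $K$ is regular, and by the step above it dominates $(\limsup_{n\in Y_1}\psi_n)^*$ almost surely while also matching $\psi_n$ in the limit on the dense set $\{z_i\}$. Theorem \ref{b} then yields $\psi_n\to V_K$ in $L^1_{loc}(\mathbb{C})$ almost surely along $Y_1$. Applying $\tfrac{1}{2\pi}\Delta$, which is continuous from $L^1_{loc}$ into distributions, gives $Z_{H_n}\to \mu_K$ almost surely along $Y_1$ as distributions, hence in the weak* topology on probability measures, as required.

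The main obstacle is really only bookkeeping: one must perform the two extractions in the correct order (first the upper bound from (i), then the diagonal from (ii) \emph{inside} the resulting $Y_0$), and one must justify the metrizability of the weak* topology in a way compatible with the definition of convergence in probability. Both points are standard, which is why I would expect the real work of the paper to lie not here but in verifying hypothesis (ii) via the Kolmogorov–Rogozin inequality in Theorem \ref{convprob}.
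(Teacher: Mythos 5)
Your proposal is correct and follows essentially the same route as the paper: extract $Y_0$ from hypothesis (i), perform nested extractions at each $z_i$ followed by a Cantor diagonalization to get a sub-subsequence along which pointwise a.s.\ convergence holds on the dense set, then invoke the $L^1_{loc}$ convergence result (Theorem \ref{b}, which the paper accesses through Theorem \ref{as}), apply $\tfrac{1}{2\pi}\Delta$, and conclude via the metric-space characterization of convergence in probability by subsequences. The only cosmetic difference is that you call Theorem \ref{b} directly rather than Theorem \ref{as}, and you slightly over-engineer the metrizability point (the weak* topology on probability measures on the Polish space $\mathbb{C}$ is already metrizable without appealing to tightness).
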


\begin{proof}
First we recall that (see \cite{Ka}, Lemma 3.2) a sequence of random variables with values in a metric space converges in probability to some limit if and only if every subsequence of those random variables contains a further subsequence which converges almost surely to the same limit.

\medskip

Let $H_n(z,\omega)$ be a sequence of random polynomials satisfying the hypotheses of the theorem. Let $Y\subset\mathbb{N}$ be a subsequence. We wish to find a further subsequence $Y^*\subset Y$ such that
\begin{equation}
\label{E:Z-Hn}
\lim_{n\in Y^*}Z_{H_n}=\mu_K \qquad \text{weak* almost surely.}
\end{equation}
We begin with a subsequence $Y_0\subset Y$ satisfying $(i)$.
Consider the point $z_1$. In probability,

\begin{equation}\label{pointwise}
\lim_{n\in Y_0}\frac{1}{n}\log|H_n(z_1,\omega)|=V_K(z_1).
\end{equation}
There is therefore a subsequence $Y_1\subset Y_0$ so that 
equation (\ref{pointwise}) holds almost surely.
Then we consider $z_2$. By hypothesis, equation (\ref{pointwise}) holds in probability with $Y_1$ and $z_2$ in place of $Y_0$ and $z_1$.  Therefore there is a subsequence of $Y_2\subset Y_1$ on which the equation holds at $z_2$ almost surely. We proceed successively in this way through all the points $\{z_1,z_2, \dots\}$. Using the Cantor diagonalization procedure we obtain a subsequence $Y^*\subset Y$ such that  almost surely $\{\frac{1}{n}\log|H_n| : n \in Y^*\}$ is locally bounded,
$$
\limsup_{n\in Y^*} \frac{1}{n}\log |H_n(z,\omega)|\leq V_K(z),
$$
and equation (\ref{pointwise}) holds almost surely at each point of the countable dense set $\{z_1,z_2, \dots\}$ with $Y_*$ in place of $Y_0$. Applying Theorem \ref{as} proves \eqref{E:Z-Hn}, as desired.

\end{proof}

\section{Convergence in probability}
\label{S:cvg-prob}

    We wish to use Theorem \ref{prob} to prove convergence in probability for random polynomials of the form (\ref{r1}).
    The upper bound on $\frac{1}n\log |H_n|$, hypothesis $(i)$, holds under assumption (\ref{meas}). It remains to establish a pointwise lower bound. For this, we use the Kolmogorov-Rogozin inequality. For a complex random variable $X$ and a positive real number $r$, define the concentration function
$$
\mathcal{Q}(X ; r) = \sup_{x \in \mathbb{C}} \;\mathbb{P}( X \in B(x, r)).
$$
Here $B(x, r)$ is the open ball of radius $r$ centred at $x$. 

\begin{theorem}[Kolmogorov-Rogozin Inequality, see \cite{Es}, Corollary 1 on page 304] There is a constant $C$ such that for any independent random variables $X_1, \dots, X_n$ and for any $r > 0$, we have
$$
\mathcal{Q}\left(\sum_{i=1}^n X_i ; \; r \right) \le \frac{C}{\sqrt{\sum_{i=1}^n [1 - \mathcal{Q}(X_i ; r)]}}.
$$

\end{theorem}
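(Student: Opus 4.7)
The plan is to follow the classical Fourier-analytic proof, adapted to the two-dimensional setting $\mathbb{C} \cong \mathbb{R}^2$. The first ingredient is an Esseen-type smoothing inequality: for any $\mathbb{C}$-valued random variable $Y$ and any $r > 0$,
$$
\mathcal{Q}(Y; r) \le C \int_{|t| \le 1/r} |\phi_Y(t)| \, dt,
$$
where $\phi_Y(t) := \mathbb{E}[\exp(i \operatorname{Re}(\bar t Y))]$ is the characteristic function and $dt$ is area measure on $\mathbb{C}$. This is proved by choosing a nonnegative kernel $h$ on $\mathbb{C}$ (for instance a Fejér-type kernel) whose Fourier transform is supported in $\{|t| \le 1/r\}$ and which satisfies $h \ge c \mathbf{1}_{B(0, r)}$, and then expanding $\mathbb{E}[h(Y - x_0)]$ via Parseval.

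Next I would reduce the sum to a product by symmetrization. Let $X_i'$ be an independent copy of $X_i$ and set $\tilde X_i := X_i - X_i'$. Then $\phi_{\tilde X_i}(t) = |\phi_{X_i}(t)|^2$ is real and takes values in $[0,1]$. By independence $\phi_{S_n} = \prod \phi_{X_i}$, and the elementary inequality $x \le \exp(x-1)$ on $[0,1]$ yields
$$
|\phi_{S_n}(t)|^2 = \prod_{i=1}^n \phi_{\tilde X_i}(t) \le \exp\Bigl(-\sum_{i=1}^n (1 - \phi_{\tilde X_i}(t))\Bigr).
$$
Substituting this bound into the smoothing inequality (applied to $S_n$) reduces the problem to estimating the integral of a controlled exponential over the disk $|t| \le 1/r$.

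The core step is to lower-bound $1 - \phi_{\tilde X_i}(t)$ by a quadratic expression in $t$ with coefficient proportional to $1 - \mathcal{Q}(X_i; r)$. Using the identity $1 - \phi_{\tilde X_i}(t) = \mathbb{E}[1 - \cos(\operatorname{Re}(\bar t \tilde X_i))]$ together with $1 - \cos s \ge c \min(s^2, 1)$, one shows that for $|t| \le 1/r$,
$$
1 - \phi_{\tilde X_i}(t) \ge c \, r^2 |t|^2 \, \mathbb{P}(|\tilde X_i| \ge r).
$$
The standard symmetrization lemma converts the concentration bound on $X_i$ into an anti-concentration bound on the symmetrized variable, namely $\mathbb{P}(|\tilde X_i| \ge r) \ge c (1 - \mathcal{Q}(X_i; r))$. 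Combining these estimates and computing the Gaussian-type integral
$$
\int_{|t| \le 1/r} \exp\Bigl(-c r^2 |t|^2 \sum_{i=1}^n (1 - \mathcal{Q}(X_i; r))\Bigr) \, dt
$$
yields $\mathcal{Q}(S_n; r) \le C / \sqrt{\sum_i (1 - \mathcal{Q}(X_i; r))}$, as desired.

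The main obstacle is extracting a square-root rate rather than a weaker tail in the final integration: one must carefully average in $t$ so that the quadratic lower bound on $1 - \cos$ combines with the linear growth of $\sum_i (1 - \phi_{\tilde X_i}(t))$ to produce the correct $1/\sqrt{\,\cdot\,}$ decay. Symmetrization is essential here, since without it $\phi_{X_i}$ could oscillate in sign and the clean multiplicative-to-exponential bound would fail; passing to $\tilde X_i$ replaces complex characteristic functions by real ones in $[0,1]$ and makes the elementary inequality $x \le e^{x-1}$ available.
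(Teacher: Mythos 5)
The paper does not prove the Kolmogorov--Rogozin inequality; it is cited as a known result of Esseen. So the only question is whether your Fourier-analytic sketch is itself correct, and I do not think it is as written.

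The framework (Esseen smoothing, symmetrization to $\tilde X_i = X_i - X_i'$, using $x \le e^{x-1}$ on $[0,1]$, then integrating) is the right one, but the ``core step'' pointwise bound
$$
1 - \phi_{\tilde X_i}(t) \;\ge\; c\, r^2 |t|^2\, \mathbb{P}(|\tilde X_i| \ge r), \qquad |t|\le 1/r,
$$
is false for $\mathbb{C}$-valued variables. If $\tilde X_i$ is supported on a line perpendicular to $t$ (say $\tilde X_i$ real-valued and $t$ purely imaginary), then $\operatorname{Re}(\bar t \tilde X_i) \equiv 0$, so $\phi_{\tilde X_i}(t) = 1$ and the left side is $0$, while the right side is $c r^2 |t|^2 \mathbb{P}(|\tilde X_i| \ge r) > 0$. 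The directional degeneracy of the real inner product $\operatorname{Re}(\bar t\, \cdot)$ in two dimensions kills the pointwise estimate. (There is also a smaller issue that $1-\cos s \ge c\min(s^2,1)$ fails at $s = 2\pi$, and that the two-dimensional Esseen smoothing inequality should carry a factor $r^2$, i.e.\ $\mathcal{Q}(Y;r) \le C r^2 \int_{|t|\le 1/r}|\phi_Y(t)|\,dt$, but these are technicalities compared to the directional problem.)

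There is a built-in red flag telling you the pointwise bound must fail: if it held, your Gaussian integral over the disk would give $\int_{\mathbb{C}} e^{-c r^2 |t|^2 S}\,dt = \pi/(c r^2 S)$, and after multiplying by the correct $r^2$ smoothing factor you would obtain $\mathcal{Q}(S_n; r) \le C/S$ with $S = \sum_i (1 - \mathcal{Q}(X_i; r))$. This is strictly stronger than the theorem's $C/\sqrt{S}$, and $C/\sqrt{S}$ is sharp (take $X_i$ real Bernoulli, where $S \sim n$ and $\mathcal{Q}(S_n; r) \sim n^{-1/2}$). In other words, your sketch ``proves'' a false improvement of the theorem, which confirms that the core step cannot stand. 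The true mechanism is that $e^{-\sum_i(1-\phi_{\tilde X_i}(t))}$ may fail to decay in one direction of $t$; the integral then behaves like a one-dimensional Gaussian integral times the remaining length $1/r$, producing $\frac{1}{r}\cdot\frac{1}{r\sqrt{S}} = \frac{1}{r^2\sqrt{S}}$ and hence the $1/\sqrt{S}$ rate after multiplying by $r^2$. Making this precise requires a genuine argument about how much of the disk $|t|\le 1/r$ can simultaneously be ``bad'' for many of the $\tilde X_i$, since different $X_i$ can spread in different directions; your sketch acknowledges the need to ``average in $t$'' but replaces that averaging with a false pointwise claim. That is the gap that needs to be filled.
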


The concentration function $\mathcal{Q}$ also has the following elementary properties. First, rescaling a complex random variable $X$ by any $a \in \mathbb{C}\setminus \{0\}$ gives that
\begin{equation}
\label{E:Q-fact-1}
\mathcal{Q}(aX; \; r) = \mathcal{Q}\left(X; \; \frac{r}{|a|} \right).
\end{equation}
Also, if $X$ and $Y$ are independent random variables, then
\begin{equation}
\label{E:Q-fact-2}
\mathcal{Q}(X + Y; \; r) \le \mathcal{Q}(X; \; r).
\end{equation}

%
%

    Theorem \ref{convprob} uses the Kolmogorov-Rogozin inequality to establish a pointwise lower bound on functions of the form $\frac{1}n\log |\sum \xi_i a_i|$.     
     
\begin{theorem}\label{convprob}
     Let $a_0,a_1,\dots$ be a sequence of non-zero complex numbers
satisfying any of the equivalent conditions of Lemma \ref{seq}.

Let $\xi_0,\xi_1,\dots$ be a sequence of i.i.d. non-degenerate complex random variables such that (\ref{meas}) holds. Then
\begin{equation}\label{prob1}
     \lim_{n \to \infty} \frac{1}{n}\log\Big|\sum_{i=0}^n\xi_ia_i\Big| = A\end{equation}
     in probability.
          \end{theorem}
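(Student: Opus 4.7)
My plan is to prove matching upper and lower bounds on $\frac{1}{n}\log|S_n|$ in probability, where $S_n := \sum_{i=0}^n \xi_i a_i$, and combine them.

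For the upper bound, I would use the triangle inequality $|S_n| \le (\max_{i\le n}|\xi_i|)\cdot \sum_{i=0}^n|a_i|$. The tail assumption \eqref{meas} gives $\mathbb{P}(|\xi_0|>e^{n\epsilon}) = o(1/n)$, so a union bound yields $\max_{i\le n}|\xi_i|\le e^{n\epsilon}$ with probability tending to one. Combined with condition (ii) of Lemma~\ref{seq}, namely $\sum_{i=0}^n|a_i| = e^{n(A+o(1))}$, this gives $\frac{1}{n}\log|S_n| \le A + 2\epsilon$ with high probability.

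The lower bound, which is the heart of the proof, goes via the Kolmogorov-Rogozin inequality. Fix $\epsilon>0$ and set $r_n := e^{n(A-\epsilon)}$; since $\mathbb{P}(|S_n|<r_n)\le \mathcal{Q}(S_n;r_n)$, it suffices to show $\mathcal{Q}(S_n;r_n)\to 0$. By non-degeneracy of $\xi_0$, pick $r_0>0$ with $q_0:=\mathcal{Q}(\xi_0;r_0)<1$, and let $J_n := \{i\le n : |a_i|\ge r_n/r_0\}$. For $i\in J_n$, the identity \eqref{E:Q-fact-1} together with the monotonicity of $\mathcal{Q}$ in its second argument gives
$$
1-\mathcal{Q}(\xi_i a_i;r_n) \;=\; 1-\mathcal{Q}(\xi_0; r_n/|a_i|) \;\ge\; 1-q_0 \;>\; 0,
$$
so the Kolmogorov-Rogozin inequality yields $\mathcal{Q}(S_n;r_n) \le C/\sqrt{(1-q_0)|J_n|}$. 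The lower bound therefore reduces to proving $|J_n|\to\infty$.

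The main obstacle will be this combinatorial claim about the sequence $|a_i|$. When $A=0$ it is immediate: $r_n/r_0\to 0$ and each fixed $|a_i|>0$ eventually exceeds this threshold. When $A>0$, let $n_1<n_2<\dots$ be the indices at which the running maximum $\max_{i\le n}|a_i|$ strictly increases. Evaluating condition (i) of Lemma~\ref{seq} at both $n=n_j$ and $n=n_{j+1}-1$ forces $\log|a_{n_j}|=n_j A(1+o(1))$ and $n_{j+1}/n_j\to 1$, so the record indices become arbitrarily dense on the logarithmic scale. Consequently, for any $\delta>0$ the window $[\log(n(1-\delta)),\log n]$ eventually contains any prescribed number of values $\log n_j$, and each such record with $n_j\ge n(1-\epsilon/(2A))$ satisfies $|a_{n_j}|\ge r_n/r_0$ and thus lies in $J_n$. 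This proves $|J_n|\to\infty$ and completes the proof.
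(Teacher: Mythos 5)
Your proof is correct and follows the same overall strategy as the paper: the upper bound via the triangle inequality together with a union bound on $\max_i|\xi_i|$, and the lower bound via the Kolmogorov--Rogozin inequality reduced to showing $|J_n|\to\infty$. The $A=0$ case is handled identically.

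The one place where you genuinely deviate is the combinatorics showing $|J_n|\to\infty$ when $A>0$. The paper fixes a small $\delta$, uses condition~(i) to pick for each $m$ an index $k(m)\le m$ with $|a_{k(m)}|\ge e^{m(A-\delta)}$, shows each $k$ can serve as $k(m)$ for at most $O(\delta k)$ values of $m$, and then counts distinct $k(m)$'s for $m$ in a window of length $\Theta(\epsilon n)$ to get $|J_n|\gtrsim \epsilon/\delta$; letting $\delta\downarrow 0$ finishes. You instead pass to the subsequence of record indices $n_j$ (where the running max increases), observe that $\log|a_{n_j}|\sim n_jA$ and, by evaluating condition~(i) at $n_j$ and $n_{j+1}-1$, that $n_{j+1}/n_j\to 1$, and then argue the window $[n(1-\epsilon/(2A)),n]$ must capture many records, each of which lands in $J_n$. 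Both arguments hinge on the same idea (indices with near-maximal $|a_i|$ cannot be too sparse), but your record-index version relies on the qualitative fact $n_{j+1}/n_j\to 1$ and a density argument, while the paper's counting argument is a bit more self-contained. One step you should tighten: from $n_{j+1}/n_j\to 1$, the claim that the window $(n(1-\delta),n]$ eventually contains at least $K$ records requires a short explicit argument (for $\eta>0$ small, eventually $n_{j+1}\le(1+\eta)n_j$; if $j_0,j_1$ are the first and last record indices with $n_j\in(n(1-\delta),n]$, then chaining $n_{j_1}\le n_{j_0}(1+\eta)^{j_1-j_0}\le n(1-\delta)(1+\eta)^{j_1-j_0+1}$ against $n_{j_1}\ge n/(1+\eta)$ gives $j_1-j_0+1\ge \log\frac{1}{1-\delta}/\log(1+\eta)-1$, which is large for $\eta$ small). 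With that filled in, the argument is complete.
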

\begin{proof}
We first show that for any $\epsilon >0,$ that
\begin{equation}
\label{prob2}
\mathbb{P}\Bigg(\frac{1}{n}\log\Big|\sum_{i=0}^n\xi_ia_i\Big|>A+\epsilon\bigg)\rightarrow 0 \qquad \text{as} \;\; n \to \infty.
\end{equation}
Recall from the proof of Lemma \ref{L:prob-bd} that condition \eqref{meas} implies that
\begin{equation}
\label{E:prob-cond}
\mathbb{P}\big( |\xi_i| \le e^{\epsilon n/2} \quad \text{for} \quad i = 0, \dots, n \big) \to 1 \qquad \text{as} \;\; n \to \infty.
\end{equation}
On the event in the above probability, we have that
\begin{align*}
\frac{1}{n}\log\Big|\sum_{i=0}^n\xi_ia_i\Big| \le \frac{1}n \log \left(\max_{i=0}^n |\xi_i|\right)+ \frac{1}n \log \left( \sum_{i=0}^n |a_i| \right) \le \frac{\epsilon}2 + \frac{1}n \log \left( \sum_{i=0}^n |a_i| \right).
\end{align*}
By Lemma \ref{seq} (ii), for all large enough $n$ the right hand side above is at most $A + \epsilon$. Combining this with the convergence in \eqref{E:prob-cond} implies \eqref{prob2}.

\medskip
We now show that for any $\epsilon > 0$, that
 \begin{equation}
 \label{prob3}
\mathbb{P}\Bigg(\frac{1}{n}\log\Bigg|\sum_{i=0}^n\xi_ia_i\Big| < A-\epsilon\Bigg)\rightarrow 0 \qquad \text{as} \;\; n \to \infty.
\end{equation}
We bound the above probability in terms of the concentration function for the sum and then apply the Kolmogorov-Rogozin inequality. This gives that
\begin{align*}
\mathbb{P}\Bigg(\frac{1}{n}\log\Bigg|\sum_{i=0}^n\xi_ia_i\Bigg| < A-\epsilon \Bigg)&\leq \mathcal{Q}\Bigg(\sum_{i=0}^n\xi_ia_i;\; e^{n(A-\epsilon)}\Bigg)\\
&\leq C\Bigg(\sum_{i=0}^n(1-\mathcal{Q}(\xi_ia_i; \;e^{n(A-\epsilon)}))\Bigg)^{-1/2}.
\end{align*}
To complete the proof of (\ref{prob3}) it suffices to prove that the sum
\begin{equation}\label{prob4}
\sum_{i=0}^n(1-\mathcal{Q}(\xi_ia_i;e^{n(A-\epsilon)}))=\sum_{i=0}^n\Bigg(1-\mathcal{Q}\Bigg(\xi_i;\frac{e^{n(A-\epsilon)}}{|a_i|}\Bigg)\Bigg)
\end{equation}
approaches infinity as $n\rightarrow \infty.$ Here the equality follows from rescaling (Equation \eqref{E:Q-fact-1}).

For this, first observe that by the non-degeneracy of $\xi_0$, we can find positive numbers $D_1,D_2$ such that for all $d \le D_1$, we have that $\mathcal{Q}(\xi_0;d) \le D_2<1.$

Therefore to show that the sum in \eqref{prob4} approaches infinity as $n\rightarrow \infty$, it is enough to show that $|J_n|\rightarrow\infty$ as $n\rightarrow \infty,$ where
$$
J_n=\left\{i\leq n: \; \frac{e^{n(A-\epsilon)}}{|a_i|}\leq D_1 \right\}.
$$
 We note that in Lemma \ref{seq}, $A\geq 0. $ We will consider two cases, $A=0$ and $A>0$, and
 first  deal with the case $A=0.$ In this case, for all $i$ we have that
  $$
  \frac{e^{n(A-\epsilon)}}{|a_i|}\longrightarrow 0 \qquad \text{ as }n\rightarrow\infty.
  $$ 
 Therefore for all $i$ there exists a number $N_i$ such that $i\in J_n$ for all $n\geq N_i.$ Therefore $|J_n|\rightarrow \infty$ as  $n\rightarrow\infty.$
 
 \medskip
 
 Now, suppose $A>0$. We may assume that $0<\epsilon<A$. Fix $\delta\in (0,\epsilon /2).$ By Condition (i) of Lemma \ref{seq}, there exists an $N$ such that for all $n \ge N$, we have that 
 $$
 \quad \frac{1}{n}\log \left(\max^n_{k=0}|a_k| \right)\in [A-\delta,A+\delta].
 $$
 This implies that for all $k \ge N$, that $|a_i|\leq e^{k(A+\delta )}$ for $i \le k$.
 Also, for all $m\geq N$ there exists a $k(m)\leq m$ such that $|a_{k(m)}|\geq e^{m(A-\delta)}.$ 
These conditions on the coefficients $a_k$ guarantee that for any $k \ge N$, that
\begin{equation}\label{prob5}
|\{m\in\mathbb{N}: k=k(m)\}|\leq \frac{2\delta k}{A-\delta} + 1.
\end{equation}
Now, observe that for all $n\geq \frac{N(A - \delta)}{A - \epsilon/2},$
for every $m\in\left[\frac{n(A-\epsilon/2)}{A-\delta},n\right],$ we have that 
$$
\frac{e^{n(A-\epsilon)}}{|a_{k(m)}|}\leq e^{-\epsilon n/2}.
$$
Choosing $n$ large enough so that $e^{-\epsilon n/2}<D_1,$ we then have that

$$|J_n|\geq \Bigg|\Bigg\{ k \leq n: k= k(m) \;\; \text{ for some }\;\; m\in\Bigg[\frac {n(A-\epsilon/2)}{A-\delta},n\Bigg]\Bigg\}\Bigg|.
 $$
By \eqref{prob5}, for all $n$ large enough, the right side above can be bounded below by
$$
\frac{\frac{n(\epsilon/2-\delta)}{A-\delta}}{{1+\frac{2\delta n}{A-\delta}}}\geq\frac{(\epsilon/2-\delta)}{3\delta}.
$$
Since $\delta$ can be taken arbitrarily small, this implies that
$|J_n| \to \infty$ as $n \to \infty$, completing the proof of \eqref{prob3}.
\end{proof}

We now have all the ingredients to prove convergence in probability of the normalized counting measure of the zeros for random orthonormal polynomials.

\begin{theorem}\label{goal}
Let $K\subset\mathbb{C}$ be a regular, compact set and let $\xi_0,\xi_1,\dots$
be a sequence of non-degenerate i.i.d complex random variables satisfying (\ref{meas}). Consider the random polynomials 
$$H_n(z):=\sum_{j=0}^n\xi_iq_j(z)$$ where $\{q_j(z)\}$ are  the orthonormal polynomials with respect to a measure on $K$ satisfying the Bernstein-Markov property defined in Section \ref{S:construct}.

Then $Z_{H_n}$ converges in probability to $\mu_K$ in the weak* topology on probability measures on $\mathbb{C}.$
\end{theorem}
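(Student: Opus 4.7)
The plan is to verify the two hypotheses of Theorem \ref{prob} and then invoke it directly. Hypothesis $(i)$ is essentially already proven: given any subsequence $Y \subset \mathbb{N}$, Lemma \ref{L:prob-bd} furnishes a further subsequence $Y_0 \subset Y$ along which, almost surely, $\{\tfrac{1}{n}\log|H_n| : n \in Y_0\}$ is locally bounded above and $\limsup \tfrac{1}{n}\log|H_n(z,\omega)| \le V_K(z)$ pointwise. So condition $(i)$ is handed to us by the assumption (\ref{meas}) together with the Bernstein-Markov estimate (\ref{BM}).

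The real work is hypothesis $(ii)$: choosing a countable dense set $\{z_i\} \subset \mathbb{C}$ on which $\tfrac{1}{n}\log|H_n(z_i,\omega)| \to V_K(z_i)$ in probability. I would first choose the dense set carefully. Let $S = \bigcup_{j \ge 0}\{z : q_j(z) = 0\}$; this is a countable union of finite sets and hence countable, so its complement $\mathbb{C} \setminus S$ is a dense subset of $\mathbb{C}$, from which I select any countable dense $\{z_i\}_{i \in \mathbb{N}}$. For each such $z_i$, none of the $q_j(z_i)$ vanish, so I can set $a_j := q_j(z_i)$ and apply Theorem \ref{convprob}, provided one of the equivalent conditions of Lemma \ref{seq} is satisfied with $A = V_K(z_i)$. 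But condition (iii) of Lemma \ref{seq} is exactly (\ref{ortho}), which holds since $\tau$ satisfies the Bernstein-Markov property. Therefore Theorem \ref{convprob} gives
\[
\frac{1}{n}\log|H_n(z_i,\omega)| \;=\; \frac{1}{n}\log\Big|\sum_{j=0}^n \xi_j q_j(z_i)\Big| \;\longrightarrow\; V_K(z_i) \quad \text{in probability,}
\]
verifying hypothesis $(ii)$.

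With both hypotheses in hand, Theorem \ref{prob} yields that $Z_{H_n} \to \mu_K$ in probability in the weak* topology, as desired. The main (minor) subtlety is making sure the dense set avoids all zeros of all the orthonormal polynomials so that Theorem \ref{convprob} applies with nonzero coefficients; once that is arranged, the theorem is essentially a routine assembly of the upper-bound lemma, the pointwise lower-bound theorem, and the abstract convergence criterion in Theorem \ref{prob}. No further probabilistic or potential-theoretic input is needed.
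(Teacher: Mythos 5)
Your proof is correct and follows essentially the same route as the paper: verify hypothesis (i) of Theorem \ref{prob} via Lemma \ref{L:prob-bd}, verify hypothesis (ii) by choosing a countable dense set avoiding the (countably many) zeros of the $q_j$'s and applying Theorem \ref{convprob} with $a_j = q_j(z_i)$ and $A = V_K(z_i)$ via \eqref{ortho} and Lemma \ref{seq}, and then invoke Theorem \ref{prob}. The subtlety you flag about avoiding zeros is exactly what the paper addresses as well.
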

\begin{proof}
By (\ref{ortho}), Lemma \ref{seq}  is satisfied with $a_i=q_i(z)$
at all points $z$ where no $q_i(z)=0,$ and thus at all but countably many points in the plane. Therefore Theorem \ref{convprob} applies and we have the convergence in probability 
$$
\frac{1}{n}\log|H_n(z,\omega)| \to V_K(z)
$$
at all but at most countably many points in the plane. Hypothesis (ii) of Theorem \ref{prob} is thus satisfied. Moreover, hypothesis (i) of Theorem \ref{prob} follows from Lemma \ref{L:prob-bd}. Applying Theorem \ref{prob}, the result follows.
\end{proof}

\begin{remark}
\label{R:pritsker}
In \cite{IZ}, Ibragimov and Zaporozhets showed that the condition 
\begin{equation}
\label{E:IZ}
\mathbb{E}\log(1 + |\xi_0|) < \infty
\end{equation} 
is equivalent to the weak* almost sure convergence of $Z_{H_{n}} \to \frac{1}{2\pi} d \theta$ in the Kac ensemble case when $q_j(z) = z^i$ (see Theorem \cite{IZ}, Theorem 1). 

Motivated by this and theorems of a similar flavour in \cite{PR}, Pritsker and Ramachandran (Conjecture 2.5, \cite{PR}) asked if there exists a measure $\tau$ on the unit circle $\{z = 1\}$ and a sequence of i.i.d. random variables $\{\xi_i\}_{i \in \mathbb{N}}$ such that \eqref{E:IZ} does not hold, and such that for the basis of orthogonal polynomials $\{q_n(z)\}_{n \in \mathbb{N}}$ constructed with respect to $\tau$, a subsequence $\{Z_{H_{n_i}}\}$ of the normalized counting measures of the zeros of the polynomials 
$$
H_n(z) = \sum_{i=0}^n \xi_i q_i(z)
$$
still converges weak* almost surely to $\frac{1}{2\pi} d \theta$. 

\medskip

Theorem \ref{goal} shows that for any sequence of i.i.d. random variables $\{\xi_i\}_{i \in \mathbb{N}}$ such that 
$$
\mathbb{E}\log(1 + |\xi_0|) = \infty \qquad \text{and} \qquad \mathbb{P}(|\xi_0| > e^{|z|}) = o\left(|z|^{-1}\right), 
$$
and any sequence of orthonormal polynomials $\{q_n(z)\}_{n \in \mathbb{N}}$ constructed with respect to a Bernstein-Markov measure $\tau$ on $\{|z| = 1\}$, that $Z_{H_n} \to \frac{1}{2\pi}d\theta$ in probability, and hence any subsequence $\{Z_{H_{n_i}}\}$ has a further subsequence which converges almost surely. This resolves Pritsker and Ramachandran's conjecture.
\end{remark}

We now show that Theorem \ref{goal} is the best possible result for general orthogonal ensembles, by showing that condition \eqref{meas} is both necessary and sufficient for the Kac ensemble. To do this, we first need a lemma about random variables.
\begin{lemma}
\label{L:tail-bd} Let $X$ be a non-negative random variable. Suppose that 
\begin{equation}
\label{E:ls-cond}
\limsup_{n \to \infty} n\mathbb{P}(X > n) > 0.
\end{equation}
Then there exists a function $f:[0, \infty) \to [0, \infty)$ such that 
\begin{enumerate}[label=(\roman*)]
\item $C(f) :=\limsup_{n \to \infty} n\mathbb{P}(X > f(n)) \in (0, \infty).$ Here the limsup is taken over $n \in \mathbb{N}$, hence the use of $n$ instead of $x$.
\item For every $x, y \in [0, \infty)$, we have that
$
f(x) + y \le f(x + y).
$
\end{enumerate}

\end{lemma}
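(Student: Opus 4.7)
My plan is to split into two cases according to $L := \limsup_{n\to\infty} n\mathbb{P}(X>n)$.

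If $L<\infty$ I would simply take $f(x)=x$: property (ii) holds with equality and $C(f)=L\in(0,\infty)$ by hypothesis.

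The substantive case is $L=\infty$. Let $q_k := \inf\{t\ge 0 : \mathbb{P}(X>t)\le 1/k\}$ (the generalized right-quantile, which satisfies $\mathbb{P}(X>q_k)\le 1/k$ by right-continuity of $t\mapsto\mathbb{P}(X>t)$) and define the superadditive envelope
\[
G(x) := \sup\{(q_k-k)^+ : k\in\mathbb{N},\, k\le x\}, \qquad f(x) := x + G(x).
\]
Property (ii) is automatic, since $f(x)-x=G(x)$ is non-decreasing in $x$. For the upper bound $C(f)\le 1$: at any integer $n$, either $q_n\ge n$, in which case $f(n)\ge n+(q_n-n)=q_n$, or $q_n<n$, in which case $f(n)\ge n\ge q_n$; either way $\mathbb{P}(X>f(n))\le\mathbb{P}(X>q_n)\le 1/n$.

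The hard part is the lower bound $C(f)>0$, which I expect to be the main obstacle. I would first observe that $L=\infty$ forces $q_k-k$ to be unbounded: any uniform bound $q_k\le k+C$ would give $\mathbb{P}(X>t)\le 1/(\lfloor t-C\rfloor)$ for $t>C$, so $t\mathbb{P}(X>t)\to 1$, contradicting $L=\infty$. Consequently there is an infinite increasing sequence of integer record times $\tau_1<\tau_2<\cdots\to\infty$ at which $q_{\tau_i}-\tau_i$ strictly exceeds all previous values $(q_j-j)^+$. At the integer point $n=\tau_{i+1}-1$ (just before the next record) the running supremum is still pinned at its value from $\tau_i$, so
\[
f(\tau_{i+1}-1) \;=\; q_{\tau_i} + (\tau_{i+1}-\tau_i) - 1.
\]
The strict record inequality $q_{\tau_{i+1}}-\tau_{i+1} > q_{\tau_i}-\tau_i$ rearranges to $q_{\tau_{i+1}} > q_{\tau_i}+(\tau_{i+1}-\tau_i)$, and hence $f(\tau_{i+1}-1) < q_{\tau_{i+1}}$. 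By the defining property of $q_{\tau_{i+1}}$ this forces $\mathbb{P}(X>f(\tau_{i+1}-1)) > 1/\tau_{i+1}$, so $n\mathbb{P}(X>f(n)) > 1-1/\tau_{i+1} \to 1$ along this subsequence. Combined with the upper bound this gives $C(f)=1\in(0,\infty)$. The main technical subtlety is handling flat stretches of the quantile $q_k$ caused by atoms of $X$ (so that records may not occur at every integer), but this is absorbed into the unboundedness of $(q_k-k)^+$ and the record argument above.
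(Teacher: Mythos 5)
Your proposal is correct and follows essentially the same route as the paper: both construct $f(x) = x + \max_{k \le \lfloor x \rfloor}\big(g(k) - k\big)^{+}$ from the $(1/k)$-quantile $g$ (your $q_k$), and obtain the lower bound on $C(f)$ by evaluating at the integers just before the (infinitely many) record times of $g(k)-k$. The only cosmetic difference is in the easy case: the paper splits on $\limsup_n n\mathbb{P}(X>2n) < \infty$ and takes $f(x)=2x$, whereas you split on $\limsup_n n\mathbb{P}(X>n) < \infty$ and take $f(x)=x$.
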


\begin{proof}
In the case when 
$
\limsup_{n \to \infty} n\mathbb{P}(X > 2n) \in (0, \infty),
$
then the function $f(x) = 2x$ works. Therefore noting that this limsup is positive by \eqref{E:ls-cond}, we can assume that 
\begin{equation}
\label{E:infinity}
\limsup_{n \to \infty} n\mathbb{P}(X > 2n) = \infty.
\end{equation}
Define a function $g: \{0, 1 \dots, \to [0, \infty)$ so that $g(0) = 0$, and for $n \ge 1$,
\begin{equation}
\label{E:reff}
\mathbb{P}(X > g(n)) \le \frac{1}n \quad \text{and} \quad \mathbb{P}(X \ge g(n)) \ge \frac{1}n.
\end{equation}
Now, for each $x \in [0, \infty)$, define
$$
f(x) = \max\{g(n) + x -n : n \in \{0, \dots, \lfloor x \rfloor\} \}.
$$
We check that $f$ satisfies the conditions of the lemma. First fix $x < y \in [0, \infty)$. For some $n \in \{0, \dots, \lfloor x \rfloor\} $, we have that 
$f(x) = g(n) + x -n$. By the definition of $f(y)$, we have that $f(y) \ge g(n) + y -n = f(x) + y - x$. Thus $f$ satisfies (i).

\medskip

Now, there must be infinitely many values of $n \in \mathbb{N}$ such that $f(n) = g(n)$. To see this, note that if there are only finitely many such values, then there exists an $m \in \mathbb{N}$ such that for all $x \ge m$, we have that $f(x) = g(m) + x - m$. Hence $g(n) \le f(n) \le 2n$ for all large enough $n$, and so by the first inequality in \eqref{E:reff},
$$
\limsup_{n \to \infty} n\mathbb{P}(X > 2n) \le  \limsup_{n \to \infty} n \mathbb{P}(X > g(n)) \le 1.
$$
This contradicts \eqref{E:infinity}. Now let $\{n_i \in \mathbb{N}\}$ be a subsequence so that $f(n_i) = g(n_i)$ for all $i$. Note that $f(n_i -1) < f(n_i) = g(n_i)$ since $f$ is strictly increasing. Therefore we have that
\begin{equation}
\label{E:ni}
\lim_{n_i \to \infty} (n_i - 1)\mathbb{P}(X > f(n_i - 1)) \ge \lim_{n_i \to \infty} (n_i - 1)\mathbb{P}(X \ge g(n_i)) \ge 1.
\end{equation}
The final inequality follows from \eqref{E:reff}. Moreover, since $f(n) \ge g(n)$ for all $n \in \mathbb{N}$, we also have that
$$
\limsup_{n \to \infty} n\mathbb{P}(X > f(n)) \le \limsup_{n \to \infty} n\mathbb{P}(X > g(n)) = 1.
$$
Here the last inequality again follows from \eqref{E:reff}. Combining this with \eqref{E:ni} implies that $C(f) = 1$.
\end{proof}

\begin{theorem}
\label{T:suff-Kac}
Consider the random polynomials 
$$H_n(z):=\sum_{j=0}^n\xi_iz^i,$$
where $\xi_0,\xi_1,\dots$
is a sequence non-degenerate i.i.d complex-valued random variables.
Then $Z_{H_n}$ converges in probability to $\frac{1}{2\pi}d\theta$ if and only if the random variables $\xi_i$ satisfy \eqref{meas}.
\end{theorem}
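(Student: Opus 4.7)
The sufficiency direction is immediate from Theorem \ref{goal} applied with $K = \{|z|=1\}$ and the Bernstein--Markov measure $\tau = \frac{1}{2\pi} d\theta$: the Gram--Schmidt procedure then returns the monomials $q_j(z) = z^j$, and by Example \ref{Ex:kac} together with \eqref{eqgreen} the equilibrium measure of $K$ coincides with $\frac{1}{2\pi} d\theta$. So the real content is necessity: assuming \eqref{meas} fails, I need to exhibit an event of probability bounded below along some subsequence on which $Z_{H_n}$ is far from $\frac{1}{2\pi} d\theta$ in the weak* topology.

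The natural strategy is to find events on which a single coefficient $\xi_k$ is much larger than all others, so that $\xi_k z^k$ dominates $H_n$ in an annulus around $|z| = 1$. Apply Lemma \ref{L:tail-bd} to $X = \log(1 + |\xi_0|)$ to obtain $f$ with $C(f) \in (0, \infty)$ and the superadditivity $f(n) + m \le f(n + m)$. Fix a subsequence $m_j$ along which $m_j \mathbb{P}(X > f(m_j)) \to C(f)$, and set $n_j = \lfloor m_j/2 \rfloor$. For each $k \in \{0, 1, \ldots, n_j\}$ define the events
$$
F_{n_j}^{(k)} := \{X_k > f(n_j) + n_j\} \cap \{X_i \le f(n_j) \text{ for all } i \in \{0,\ldots,n_j\} \setminus \{k\}\},
$$
which are pairwise disjoint. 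Using $f(n_j) + n_j \le f(2n_j) \le f(m_j)$ together with the uniform bound $\mathbb{P}(X > f(n)) \le 1/n$ that follows from the construction of $f$ in Lemma \ref{L:tail-bd}, one checks that $\mathbb{P}(\bigcup_k F_{n_j}^{(k)})$ converges along the subsequence to $C(f)/(2e) > 0$.

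On $F_n^{(k)}$ we have $|\xi_k| \ge e^{f(n)+n}$ while $|\xi_j| \le e^{f(n)}$ for $j \ne k$, so the single term $\xi_k z^k$ exceeds the sum of the rest on both of the circles $|z| = 1/2$ and $|z| = 2$: at $|z|=2$ the sum is bounded by $e^{f(n)} 2^{n+1}$ while $|\xi_k z^k| > e^{f(n)+n} 2^k$, and the comparison reduces to $e^n > 2^{n+1-k}$, which holds for all $k \le n$ once $n$ is large since $e > 2$; the case $|z|=1/2$ is analogous. By Rouche's theorem, $H_n$ then has exactly $k$ zeros in each of the disks $\{|z| < 1/2\}$ and $\{|z| < 2\}$, so no zeros at all in the annulus $\{1/2 \le |z| \le 2\}$. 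Testing $Z_{H_n}$ against a continuous function supported in this annulus that integrates to $1$ against $\frac{1}{2\pi} d\theta$ produces a deviation of size at least $1$ with probability bounded below by $C(f)/(2e)$ along the subsequence, which is incompatible with convergence in probability of $Z_{H_n}$ to $\frac{1}{2\pi} d\theta$.

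The main technical obstacle is calibrating the threshold in the definition of $F_n^{(k)}$: the gap $e^n$ between $|\xi_k|$ and the remaining coefficients must be large enough to beat the geometric amplification $2^{n+1}$ at the outer circle, yet the event must still occur with probability of order $1/n$ per index so that the disjoint union over the $n+1$ indices has non-vanishing mass. The superadditivity $f(x+y) \ge f(x) + y$ provided by Lemma \ref{L:tail-bd} is precisely what reconciles these two constraints, allowing the threshold $f(n) + n$ to be controlled via $f(2n)$ and the subsequence $m_j \sim 2n_j$.
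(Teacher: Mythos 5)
Your argument is correct up to a small calibration issue, and it identifies the same critical configuration as the paper: one coefficient $\xi_m$ exceeding every other $|\xi_j|$ by a factor $e^n$, so that $\xi_m z^m$ dominates $H_n$ on an annulus around $\{|z| = 1\}$ and no zeros occur there. Both proofs extract that configuration from Lemma \ref{L:tail-bd} via the superadditivity $f(x) + y \le f(x+y)$, and both finish with the same weak* separation. Where you genuinely diverge from the paper is the probability estimate. The paper scales the threshold by a free parameter $\alpha$, sets $B_{n,\alpha} := \bigl|\{ i \le n : |\xi_i| > g(\alpha n)\}\bigr|$, and invokes Poisson convergence of binomials to conclude that $\limsup_{n_i}\bigl(\mathbb{P}(B_{n_i,\alpha}=1) - \mathbb{P}(B_{n_i,\alpha-1}\ge 2)\bigr) > 0$ once $\alpha$ is taken large enough. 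You instead enumerate the $n+1$ pairwise disjoint events $F_n^{(k)}$ explicitly, sum their probabilities, and run along the halved subsequence $n_j = \lfloor m_j/2\rfloor$; that halving plays the role the paper's parameter $\alpha$ plays, but with no Poisson approximation and no ``large enough $\alpha$'' to tune, and it yields an explicit lower bound on the probability. The calibration issue: you claim $\mathbb{P}(X > f(n)) \le 1/n$ from the construction in Lemma \ref{L:tail-bd}, but that bound is only available via the auxiliary function $g$ in the second case of that lemma's proof; when the lemma falls into its first case $f(x) = 2x$, one gets only $\mathbb{P}(X > f(n)) \le M/n$ with $M := \sup_n n\mathbb{P}(X > f(n)) < \infty$ (finite because $C(f) < \infty$). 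This replaces your final constant $C(f)/(2e)$ by $C(f)e^{-M}/2$, which is still positive, so the argument survives unchanged.
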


\begin{proof} The ``if" statement is a consequence of Theorem \ref{goal}, with $K = \{z : |z| = 1\}$ and $\tau = \frac{1}{2\pi} d\theta$ (see Example \ref{Ex:kac}). Now suppose that the random variables $\xi_i$ fail to satisfy \eqref{meas}. 

\medskip

Recall that $Z_{H_n}$ is a random variable on the space $\mathcal{M}$ of probability measures on $\mathbb{C}$ with the weak* topology.  To show that $Z_{H_n}$ does not converge in probability to  $\frac{1}{2\pi}d\theta$, it suffices to find an open set $U \subset \mathcal{M}$ that contains $\frac{1}{2\pi}d\theta$, and such that 
\begin{equation}
\label{E:meas-cvg}
\liminf_{n \to \infty} \mathbb{P}(Z_{H_n} \in U) < 1.
\end{equation}
Let 
$$
\mathcal{O} = \{ z \in \mathbb{C} : 1/2 < |z| < 3/2 \}, \quad \text{and let} \quad U = \{\mu \in \mathcal{M} : \mu(\mathcal{O}) > 1/2\}.
$$
$U$ is open in $\mathcal{M}$ by the portmanteau theorem, and contains $\frac{1}{2\pi}d\theta$.
We will show that
\begin{equation}
\label{E:sup-in-O}
\limsup_{n \to \infty} \; \mathbb{P} (Z_{H_n}(\mathcal{O}) = 0) > 0,
\end{equation}
which in turn proves \eqref{E:meas-cvg}, showing that $Z_{H_n}$ does not converge in probability to $\frac{1}{2\pi} d \theta$. To prove \eqref{E:sup-in-O}, we show that
\begin{align}
\label{E:want-poisson}
\limsup_{n \to \infty} \; \mathbb{P} \bigg( \text{There exists } &m \in \{0, \dots, n \} \text{ such that }\\
\nonumber & |\xi_m| \ge e^n|\xi_j| \text{ for all } j \in \{0, \dots, n \} , \; j \ne m \bigg) > 0.
\end{align}
To see why this is sufficient, observe that on the event in \eqref{E:want-poisson}, for all large enough $n$, we have that
$$
|\xi_m||z|^m > \sum_{\substack{ j \in [0, n] \\
j \ne m}} |\xi_j||z|^j \ge \bigg|\sum_{\substack{ j \in [0, n] \\
j \ne m}} \xi_jz^j \bigg|
$$
for all $z \in \mathcal{O}$. Therefore $H_n$ has no zeroes in $\mathcal{O}$ on this event. We now prove \eqref{E:want-poisson}. For a function $f: [0, \infty) \to [0, \infty)$ and $n \in \mathbb{N}$, define
$$
D_n(f) := n\mathbb{P}(|\xi_0| > g(n)), \quad \text{and} \quad D(f) := \limsup_{n \to \infty} D_n(f).
$$
Since \eqref{meas} is not satisfied by the $\xi_i$, we can apply Lemma \ref{L:tail-bd} to the random variable $\log |\xi_0|$ getting a function $f$ satisfying properties (i) and (ii) of that lemma. Letting $g = e^f$, we then have that
\begin{enumerate}[label=(\roman*)]
\item $D(g) \in (0, \infty)$.
\item For every $x, y \in [0, \infty)$, we have that 
$
g(x + y) \ge e^yg(x).
$
\end{enumerate}
%
%
For $\alpha \in (0, \infty)$, define $g_\alpha(x) : = g(\alpha x)$. Observe that $\alpha D(g_{\alpha}) = D(g)$.
Now define
$$
B_{n, \alpha} = \left| \{ i \le n : |\xi_i| > g_\alpha(n) \} \right|.
$$
For each $\alpha$, $B_{n, \alpha}$ is a binomial random variable with $n$ trials and mean $D_n(g_\alpha)$. Now for any $\alpha > 1$, there exists a subsequence $\{n_i\}$ such that
$$
\lim_{n_i \to \infty} \mathbb{E} B_{n_i, \alpha} = \frac{D(g)}\alpha, \quad \text{whereas} \quad \limsup_{n_i \to \infty} \mathbb{E} B_{n_i, \alpha - 1} \le \frac{D(g)}{\alpha - 1}.
$$
Therefore for large enough $\alpha$, Poisson convergence for binomial random variables implies that
$$
\limsup_{n_i \to \infty} \mathbb{P} (B_{n_i, \alpha} = 1) - \mathbb{P} (B_{n_i, \alpha - 1} \ge 2) > 0.
$$
%
%
%
By property (ii) of the function $g$, this implies \eqref{E:want-poisson}.
\end{proof}

\begin{section}{Almost sure convergence}
\label{S:almost-sure}

In this section, we consider almost sure convergence for random polynomial ensembles. Our main tool for doing this is a small ball probability theorem of Nguyen and Vu (\cite{NV}, Corollary 2.10). In \cite{NV}, this theorem stated for real-valued random variables $\xi$ satisfying the condition
$$
\mathbb{P}(1 \le |\xi_1 - \xi_2| \le C) \ge 1/2
$$
for some value of $C$, where $\xi_1, \xi_2$ are independent copies of $\xi$. However, the proof can easily be extended to all non-degenerate real random variables, which satisfy
$$
\mathbb{P}(b_1 \le |\xi_1 - \xi_2| \le b_2) > 0
$$
for some $b_1, b_2 > 0$ at the expense of changing some of the constants (this version of their theorem is stated in \cite{NV2}). The proof can also be extended to accommodate complex-valued random variables by making a few other minor modifications. 

\medskip

The result we state and use here is weaker than the result from \cite{NV}, since we don't need to use information about the arithmetic structure of the coefficient set $\mathcal{A}$.

\begin{theorem}
\label{T:small-ball} Let $0 < \epsilon < 1, \; C > 0$ be arbitrary constants, and $\beta > 0$ a parameter that may depend on $n$. Suppose that $\mathcal{A} = \{a_0, a_1 , a_2, \dots a_n\}$ is a (multi)-subset of $\mathbb{C}$ such that $\sum_{i=0}^n |a_i|^2 = 1$, and let $\xi_0, \xi_1, \dots \xi_n$ be i.i.d. non-degenerate complex random variables. Suppose additionally that 
$$
\mathcal{Q}\left(\sum_{i=0}^n \xi_i a_i \; ; \; \beta\right) \ge n^{-C}.
$$
Then there exists a constant $D$ depending only on $\xi_0$ and $\epsilon$ such that for any number $n' \in (n^\epsilon, n)$, at least $n - n'$ elements of $\mathcal{A}$ can be covered by a union of
$
\max \left( \frac{Dn^C}{\sqrt{n'}}, 1 \right)
$
balls of radius $\beta$.
\end{theorem}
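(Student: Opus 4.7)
The proof follows the Fourier-analytic strategy of Halász and Nguyen--Vu \cite{NV}, with minor modifications for the complex-valued setting. Identify $\mathbb{C}\cong\mathbb{R}^2$. The first step is the two-dimensional Esseen concentration inequality: for any complex random variable $S$ with characteristic function $\phi_S(t)=\mathbb{E}[e^{i\,\mathrm{Re}(\bar t S)}]$,
\[
\mathcal{Q}(S;\beta)\le K\beta^2\int_{|t|\le 1/\beta}|\phi_S(t)|\,dA(t).
\]
Applied with $S=\sum_{i=0}^n\xi_ia_i$, the product structure $\phi_S(t)=\prod_{i=0}^n\phi(\bar t a_i)$ and the hypothesis $\mathcal{Q}(S;\beta)\ge n^{-C}$ yield
\[
n^{-C}\le K\beta^2\int_{|t|\le 1/\beta}\prod_{i=0}^n|\phi(\bar t a_i)|\,dA(t),
\]
where $\phi$ is the common characteristic function of the $\xi_i$.

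Next I would use the non-degeneracy of $\xi_0$ to convert this into an arithmetic constraint on the $a_i$. Let $\eta=\xi_0-\xi_0'$ with $\xi_0'$ an independent copy; by non-degeneracy there exist $b_1,b_2,p>0$ with $\mathbb{P}(b_1\le|\eta|\le b_2)\ge p$. The identity $|\phi(s)|^2=\mathbb{E}\cos(\mathrm{Re}(\bar s\eta))$ together with the elementary bound $1-\cos(2\pi x)\ge 8\|x\|_{\mathbb{R}/\mathbb{Z}}^2$ gives
\[
|\phi(s)|^2\le\exp\bigl(-c_1 g(s)\bigr),\qquad g(s):=\mathbb{E}\Bigl[\|\mathrm{Re}(\bar s\eta)/(2\pi)\|_{\mathbb{R}/\mathbb{Z}}^2\,\mathbf{1}_{b_1\le|\eta|\le b_2}\Bigr].
\]
Substituting this into the previous display and pigeonholing over the disk $|t|\le 1/\beta$ of area $\pi\beta^{-2}$, I obtain some $t_*\in\mathbb{C}$ with $|t_*|\le 1/\beta$ such that
\[
\sum_{i=0}^n g(t_* a_i)\le C_2\log n.
\]

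Finally I would extract the covering. Choose a threshold $\delta$ proportional to $(\log n)/n'$, which is admissible because $n'>n^\epsilon$, and call an index $i$ \emph{exceptional} if $g(t_* a_i)>\delta$. By Markov, there are at most $C_2\log n/\delta\le n'$ exceptional indices after adjusting constants. For a non-exceptional index, $g(t_*a_i)\le\delta$ forces $t_*a_i$ to lie within Euclidean distance $O(\sqrt\delta)$ of a discrete ``lattice-like'' set $\Lambda\subset\mathbb{C}$ determined by the symmetric distribution of $\eta$. A lattice-counting step---where the small-ball hypothesis $n^{-C}$ re-enters through a second-moment/Cauchy--Schwarz estimate, exactly as in \cite{NV,NV2}---shows that the non-exceptional values $t_* a_i$ concentrate in $O(n^C/\sqrt{n'})$ clusters of diameter $O(\sqrt\delta)$. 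Dividing back by $t_*$ and using $|1/t_*|\ge\beta$, these pull back to $O(n^C/\sqrt{n'})$ balls of radius $\beta$ covering all but at most $n'$ of the $a_i$. The main obstacle, and the place where the complex-valued generalization of \cite{NV} requires the promised ``minor modification,'' is verifying the isotropic quadratic lower bound for $g$ near the origin: one must argue that because $\eta$ is non-degenerate as a complex random variable (in particular, its support is not contained in any single real line through the origin), the real projection $\mathrm{Re}(\bar s\eta)$ spreads enough to control $g(s)$ uniformly in the direction of $s$. Once this is in hand, the lattice counting and pigeonholing proceed exactly as in the real case.
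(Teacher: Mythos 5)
The paper does not actually prove Theorem~\ref{T:small-ball}: immediately before the statement, the authors cite it as a weakened form of Corollary~2.10 of \cite{NV}, and they only remark that Nguyen and Vu's proof ``can easily be extended'' to general non-degenerate real coefficients and ``can also be extended to accommodate complex-valued random variables by making a few other minor modifications.'' So there is no proof in the paper to compare against; what you are being asked to reproduce is essentially the Nguyen--Vu argument itself.

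Your sketch correctly identifies the Hal\'asz/Esseen framework underlying \cite{NV}, and the opening steps are sound: the two-dimensional Esseen inequality, the factorization of $\phi_S$, the bound $|\phi(s)|^2\le\exp(-c\,g(s))$ via the symmetrized variable $\eta=\xi_0-\xi_0'$ and the elementary inequality $1-\cos(2\pi x)\ge 8\|x\|_{\mathbb{R}/\mathbb{Z}}^2$, and the Markov step producing at most $n'$ exceptional indices after choosing $\delta\asymp(\log n)/n'$. But there are two genuine gaps.

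First, the decisive step---that the non-exceptional $t_*a_i$ fall into only $O(n^C/\sqrt{n'})$ clusters---is asserted as ``a lattice-counting step \dots exactly as in \cite{NV,NV2}'' rather than carried out. This is where essentially all of the content of the theorem lives: without it you have only shown that most $t_*a_i$ are near some union of translated dual-lattice points of $\eta$, which is a much weaker statement (the number of such points in the relevant range is a priori of order $\beta^{-2}$, far larger than $n^C/\sqrt{n'}$). The Cauchy--Schwarz/double-counting argument that trades the hypothesis $\mathcal{Q}\ge n^{-C}$ for the cluster count is precisely what needs to be written down, and it is also where the complex-valued modification requires care.

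Second, the final pullback is quantitatively wrong as stated. If the $t_*a_i$ sit in clusters of diameter $O(\sqrt\delta)$, then after dividing by $t_*$ the $a_i$ sit in clusters of diameter $O(\sqrt\delta/|t_*|)$. The only control you have from the pigeonhole is $|t_*|\le 1/\beta$, which gives $\sqrt\delta/|t_*|\ge\sqrt\delta\,\beta$---a lower bound on the cluster diameter, not the upper bound $\le\beta$ that the theorem requires. For the pullback to yield radius $\beta$ you would need $|t_*|\ge\sqrt\delta/\beta$, and nothing in your sketch forces $t_*$ away from the origin at that scale. (Excising a small disk around $0$ before pigeonholing only yields $|t_*|\gtrsim n^{-C/2}/\beta$, which would require $\delta\lesssim n^{-C}$; since $\delta\asymp(\log n)/n'$ with $n'\le n$, this fails whenever $C\ge 1$.) In the genuine Nguyen--Vu argument the scale of the approximating GAP is tied to $\beta$ from the start---one does not first produce $\sqrt\delta$-clusters of $t_*a_i$ and then rescale---so a correct write-up would have to organize the second-moment/counting step so that the $\beta$-scale emerges directly rather than through division by $t_*$.
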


We translate this into a lemma that can be applied to prove almost sure convergence of random polynomial zeros.

\begin{lemma}
\label{L:as-criterion}
 Let $\{a_{n, i} : 0 \le i \le n, \; i, n \in \{0, 1, \dots \} \}$ be a triangular array of complex numbers such that
\begin{equation}
\label{E:A-limit}
\lim_{n \to \infty} \frac{1}{2n} \log \left( \sum_{i=0}^{n} |a_{n, i}|^2 \right) = A.
\end{equation}
Let $||a^{(n)}||$ be the Euclidean norm of $(a_{0, n}, \dots , a_{n, n})$, and let $w_{n, i} = a_{n, i}/||a^{(n)}||$. Suppose that for any $\epsilon > 0$, there exists a $\delta > 0$ such that for all large enough $n$, the set 
$$
\mathcal{W}_n = \{w_{n, i} : 0 \le i \le n \}
$$
cannot be covered by a union of $n^{2/3 + \delta}$ balls of radius $e^{-\epsilon n}$. If $\{\xi_0, \xi_1, \dots \}$ is a sequence of non-degenerate i.i.d. complex random variables, then
$$
\liminf_{n \to \infty} \frac{1}n \log \left| \sum_{i=0}^n \xi_i a_{n, i} \right| \ge A \quad \text{ almost surely}.
$$

\end{lemma}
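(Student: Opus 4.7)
The approach is a Borel-Cantelli argument. Fix $\epsilon > 0$ and set $p_n := \mathbb{P}\bigl(|\sum_{i=0}^n \xi_i a_{n,i}| < e^{n(A-\epsilon)}\bigr)$. I will show that $\sum_n p_n < \infty$; this yields $\frac{1}{n}\log|\sum_i \xi_i a_{n,i}| \ge A - \epsilon$ almost surely for all large $n$, and intersecting over the countable sequence $\epsilon = 1/k$ then gives the desired almost sure lower bound.

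First I renormalize. By the scaling identity \eqref{E:Q-fact-1}, $p_n \le \mathcal{Q}(\sum_i \xi_i w_{n,i};\, \beta_n)$ where $\beta_n := e^{n(A-\epsilon)}/\|a^{(n)}\|$. Hypothesis \eqref{E:A-limit} yields $\|a^{(n)}\| \ge e^{n(A - \epsilon/4)}$ for large $n$, so $\beta_n \le e^{-n\epsilon/2}$. Since $\sum_i |w_{n,i}|^2 = 1$, we are in position to apply Theorem \ref{T:small-ball} to $\mathcal{W}_n$.

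The heart of the argument uses the contrapositive of Theorem \ref{T:small-ball}. Apply the non-covering hypothesis with $\epsilon/2$ in place of $\epsilon$ to produce a $\delta > 0$ such that for large $n$, $\mathcal{W}_n$ cannot be covered by $n^{2/3+\delta}$ balls of radius $e^{-n\epsilon/2}$. Suppose for contradiction that $\mathcal{Q}(\sum_i \xi_i w_{n,i};\, \beta_n) \ge n^{-(1+\delta/3)}$ for some large $n$. Applying Theorem \ref{T:small-ball} with $C = 1 + \delta/3$ and $n' = \lceil n^{2/3 + \delta/3} \rceil$ (which lies in $(n^{1/2}, n)$ for large $n$) furnishes a cover of at least $n - n'$ elements of $\mathcal{W}_n$ by at most $Dn^{1+\delta/3}/\sqrt{n'} \le Dn^{2/3 + \delta/6}$ balls of radius $\beta_n$. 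Adding a singleton ball for each of the remaining $n'$ elements, all of $\mathcal{W}_n$ is covered by at most $n^{2/3 + \delta/3} + Dn^{2/3 + \delta/6} \le n^{2/3 + \delta}$ balls of radius $\beta_n \le e^{-n\epsilon/2}$, contradicting the non-covering hypothesis. Hence $p_n \le n^{-(1+\delta/3)}$ eventually, so $\sum_n p_n < \infty$, and Borel-Cantelli finishes the proof.

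The main obstacle is calibrating the exponents. Theorem \ref{T:small-ball} produces at best roughly $n^{2C/3}$ balls once $n'$ is optimized against $Dn^C/\sqrt{n'}$, so the $n^{2/3+\delta}$ covering threshold only accommodates $C < 1 + 3\delta/2$. Since summability of $p_n$ requires $C > 1$, there is a narrow window $C \in (1,\, 1 + 3\delta/2)$ in which both constraints are compatible; the $n^{2/3+\delta}$ exponent in the hypothesis is precisely tuned to create this window.
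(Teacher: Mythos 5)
Your proof is correct and follows essentially the same path as the paper's: rescale to the normalized coefficients $w_{n,i}$, apply the Nguyen--Vu small-ball bound in contrapositive form to conclude $\mathcal{Q}$ is summable, and invoke Borel--Cantelli. The only differences are bookkeeping choices of exponents (the paper takes $n' = n^{2/3}$, $C = 1 + \delta/2$ and targets radius $e^{n(A-2\epsilon)}$, while you take $n' = \lceil n^{2/3+\delta/3}\rceil$, $C = 1 + \delta/3$ and shift the hypothesis to $\epsilon/2$), both lying in the admissible window you identify at the end.
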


\begin{proof} For any $\epsilon > 0$, we have that
$$
\mathbb{P} \left(\frac{1}n \log \left| \sum_{i=0}^n \xi_i a_{n, i} \right| < A - 2\epsilon \right) \le \mathcal{Q}\left( \sum_{i=0}^n \xi_i a_{n, i} \; ; \; e^{n(A-2\epsilon)} \right).
$$
Therefore by the Borel-Cantelli lemma, to prove the lemma it is enough to show that for every $\epsilon > 0$, that
\begin{equation}
\label{E:ga-sum}
\sum_{i=0}^\infty \mathcal{Q}\left( \sum_{i=0}^n \xi_i a_{n, i} \; ; \; e^{n(A-2\epsilon)} \right) < \infty.
\end{equation}
For all large enough $n$, \eqref{E:A-limit} guarantees that 
$$
\left(\sum_{i=0}^n |a_{n, i}|^2 \right)^{1/2}\in [e^{n(A-\epsilon)}, e^{n(A+\epsilon)}].
$$
Therefore for such $n$, the rescaling property of $\mathcal{Q}$ (Equation \eqref{E:Q-fact-1}) implies that
\begin{equation}
\label{E:Q-comp}
\mathcal{Q}\left( \sum_{i=0}^n \xi_i a_i \; ; \; e^{n(A-2\epsilon)} \right) \le \mathcal{Q}\left( \sum_{i=0}^n \xi_i w_i \; ; \; e^{-\epsilon n} \right).
\end{equation}
Now let $\delta$ be as in the statement of the lemma for the above value of $\epsilon$. Take $n' = n^{2/3}$ in Theorem \ref{T:small-ball}. By that theorem, there exists a constant $D$ independent of $n$ such that if the right hand side of \eqref{E:Q-comp} is greater than $n^{-1 - \delta/2}$, then at least $n - n^{2/3}$ elements of $\mathcal{W}_n = (w_{0, n}, \dots w_{n, n})$ can be covered by a union of $Dn^{2/3 + \delta/2}$ balls of radius $e^{-\epsilon n}$. This implies that all elements of $\mathcal{W}_n$ can be covered by a union of $(1 + D)n^{2/3 + \delta/2}$ balls of radius $e^{-\epsilon n}$. By the assumption on the array $\{a_{n, i}\}$, this can only occur for finitely many $n$.

\medskip

Therefore the right hand side side of \eqref{E:Q-comp} is summable in $n$, and hence so is the left hand side, proving \eqref{E:ga-sum}.
\end{proof}

We can now check that certain sequences of random polynomials satisfy the conditions of Lemma \ref{L:as-criterion} for almost every value of $z$. The setting is as follows. Consider coefficients 
$$
\{f_{n, k} \in \mathbb{C} : k \in \{0, \dots n\}, n \in \{0, 1, \dots\} \}
$$
satisfying the following two conditions:
\begin{enumerate}[label = (\roman*)]
\item There exists a continuous function $V: \mathbb{C} \to \mathbb{R}$ such that for every $z \in \mathbb{C}$, we have that
\begin{equation}
\label{E:log-Cz}
\lim_{n \to \infty} \frac{1}n \log \left( \sum_{k=0}^n |f_{n, k}||z|^k \right) = V(z).
\end{equation}
Moreover, this convergence is locally uniform. We assume that $V(z)$ is subharmonic, and that $V(z) - \log(|z|)$ is bounded as $z \to \infty$. This growth condition ensures that $\frac{1}{2\pi}{\Delta V(z)}$ is a probability measure.

\item There exists a set $\mathcal{D} \subset \mathbb{C}$ whose complement has Lebesgue measure zero, such that for every $z \in \mathcal{D}$, the following holds. For any $\epsilon > 0$, there exists an $n_0(\epsilon, z) \in \mathbb{N}$ and $\delta(\epsilon, z) > 0$ such that for all $n \ge n_0(\epsilon, z)$, we have that
\begin{equation}
\label{E:f-n-big}
\left|\left\{ k \in [0, n] : |f_{n, k}||z|^k \ge e^{n(V(z) - \epsilon)} \right\} \right|\ge n^{2/3 + \delta(\epsilon, z)}.
\end{equation}
\end{enumerate} 

 \medskip
 
Now consider a sequence of i.i.d. non-degenerate complex-valued random variables $\{\xi_0, \xi_1, \dots \}$ with $\mathbb{E}\log(1 + |\xi_0|) < \infty$, and define the random polynomials
\begin{equation}
\label{E:Gn-def}
G_n(z, \omega) = \sum_{k=0}^n \xi_k f_{n, k} z^k.
\end{equation}

Then we have the following theorem. Again, we let $(\Omega, \mathcal{F}, \mathbb{P})$ be the probability space on which the random variables $\xi_i$ are defined.
\begin{theorem}
\label{T:general-KZ}
\begin{enumerate}[nosep, label = (\Roman*)]
\item For almost every $\omega \in \Omega$, the sequence $\{\frac{1}{n}\log |G_n| : n \in \mathbb{N}\}$ is locally bounded above, and
\begin{equation}
\label{E:KZ-upper}
\limsup_{n \to \infty} \frac{1}n \log |G_n(z, \omega)| \le V(z)\;\; \text{ for every $z \in \mathbb{C}$}.
\end{equation}
\item For almost every $z \in \mathbb{C}$, we have that
$$
\lim_{n \to \infty} \frac{1}n \log |G_n(z, \omega)| = V(z) \text{ \;\; for almost every $\omega \in \Omega$}.
$$

\end{enumerate}
\end{theorem}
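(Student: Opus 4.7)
For part (I), the plan is a direct Borel--Cantelli argument in the spirit of Lemma \ref{L:as-bd}. The assumption $\mathbb{E}\log(1+|\xi_0|)<\infty$ makes the tail sums $\sum_n\mathbb{P}(|\xi_n|>e^{\epsilon n})$ finite for every $\epsilon>0$, so almost surely $|\xi_n(\omega)|\le e^{\epsilon n}$ for all but finitely many $n$. Plugging this into
\[
|G_n(z,\omega)|\le\Bigl(\max_{0\le k\le n}|\xi_k(\omega)|\Bigr)\sum_{k=0}^n|f_{n,k}||z|^k
\]
and invoking condition (i), which gives locally uniform convergence of the logarithm of the right-hand sum to $V(z)$, yields $\limsup_n \frac{1}{n}\log|G_n(z,\omega)|\le V(z)+\epsilon$ locally uniformly in $z$. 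Letting $\epsilon\to 0$ along a countable sequence then furnishes both the pointwise upper bound \eqref{E:KZ-upper} for every $z\in\mathbb{C}$ and the local boundedness of $\{\frac{1}{n}\log|G_n|\}$, outside a single null set.

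For part (II), I would fix $z\in\mathcal{D}$ and apply Lemma \ref{L:as-criterion} with $a_{n,k}:=f_{n,k}z^k$ and $A:=V(z)$ to obtain $\liminf_n\frac{1}{n}\log|G_n(z,\omega)|\ge V(z)$ almost surely. Combined with the matching upper bound from part (I), this forces an a.s.\ limit equal to $V(z)$ for each fixed $z\in\mathcal{D}$, and a Fubini argument on $\mathbb{C}\times\Omega$ then upgrades this to the formulation in (II). Hypothesis \eqref{E:A-limit} of Lemma \ref{L:as-criterion} is immediate from condition (i) together with Lemma \ref{seq}, since the two asymptotics identify the same exponential rate $V(z)$ for $\sum_k|f_{n,k}||z|^k$ and for $\sum_k|f_{n,k}|^2|z|^{2k}$.

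The main obstacle is verifying the covering hypothesis of Lemma \ref{L:as-criterion}: for every $\epsilon>0$ I must exhibit $\delta>0$ such that, for all large $n$, the normalized coefficient set $\mathcal{W}_n=\{f_{n,k}z^k/\|a^{(n)}\|\}_{k=0}^n$ is not coverable by $n^{2/3+\delta}$ balls of radius $e^{-\epsilon n}$. Starting from condition (ii) with the smaller parameter $\epsilon/4$, I would select at least $n^{2/3+\delta_0}$ indices $k$ satisfying $|f_{n,k}||z|^k\ge e^{n(V(z)-\epsilon/4)}$; combined with the upper norm bound $\|a^{(n)}\|\le e^{n(V(z)+\epsilon/4)}$ (from condition (i) via Lemma \ref{seq}), these produce at least $n^{2/3+\delta_0}$ normalized coefficients $w_{n,k}$ of modulus at least $e^{-\epsilon n/2}$. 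Any ball of radius $e^{-\epsilon n}$ meeting the annulus $\{|w|\ge e^{-\epsilon n/2}\}$ restricts the arguments of its points to an arc of length $O(e^{-\epsilon n/2})$, so a covering by only $n^{2/3+\delta}$ such balls would force the arguments $\arg f_{n,k}+k\arg z$ of the selected $w_{n,k}$ to cluster into that many narrow angular windows. Ruling out this clustering is the delicate step; I expect it to follow by showing that the clustering phenomenon forces $z$ into a Lebesgue-null subset of $\mathcal{D}$, which can be discarded without affecting the almost-everywhere statement in (II).
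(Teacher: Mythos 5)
Part (I) matches the paper: it is just Lemma \ref{L:as-bd} applied with $a_{n,k}=f_{n,k}z^k$, and your Borel--Cantelli argument is the intended one.

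For part (II), you correctly identify the structure of the proof — reduce to a lower bound, apply Lemma \ref{L:as-criterion} with $a_{n,k}=f_{n,k}z^k$, check \eqref{E:A-limit} via Lemma \ref{seq} — and you correctly isolate the genuine difficulty, namely verifying the covering hypothesis of Lemma \ref{L:as-criterion} for almost every $z$. But your proposal stops there: the sentence ``Ruling out this clustering is the delicate step; I expect it to follow\dots'' is the entire substance of the theorem, and your angular sketch is not the right mechanism. In particular, the moduli of the selected $w_{n,k}(z)$ are only known to lie in $[e^{-\epsilon n/2}, 1]$, an interval whose length is of order $1$, which can easily absorb $n^{2/3+\delta}$ points into far fewer than $n^{2/3+\delta}$ balls of radius $e^{-\epsilon n}$; the arguments $\arg f_{n,k} + k\arg z$ can also genuinely coincide (take $z$ real positive and $f_{n,k}$ real positive). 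So the ``clustering'' you want to forbid does occur for particular $z$, and the theorem is really a claim that the set of such $z$ is Lebesgue-null, not that clustering is impossible.

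The paper's argument supplies exactly the two ingredients you are missing. First, a pigeonhole step: with $|J_n(z_0,\epsilon)|\ge n^{2/3+\delta}$ indices among $\{0,\dots,n\}$ and only $n^{2/3+\delta/2}$ covering balls, some two indices $k_1<k_2\in J_n$ with $k_2-k_1\le n^{1-\delta/2}$ must fall in a common ball, forcing $|w_{n,k_1}(z)-w_{n,k_2}(z)|$ to be small. Second, the algebraic identity
\[
w_{n,k_1}(z)-w_{n,k_2}(z)=w_{n,k_1}(z)\Bigl(\tfrac{f_{n,k_2}}{f_{n,k_1}}z^{k_2-k_1}-1\Bigr),
\]
combined with the lower bound $|w_{n,k_1}(z)|\ge e^{-3\epsilon n}$, shows that this smallness forces a fixed polynomial of degree $k_2-k_1\le n^{1-\delta/2}$ (whose leading coefficient ratio is controlled because both indices lie in $J_n$) to be tiny; Lemma \ref{L:poly-estimate} then bounds the Lebesgue measure of such $z$ by a quantity like $\pi n|z_0|^2 e^{-c\epsilon n^{\delta/2}}$. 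A union bound over the at most $n^2$ pairs and a Borel--Cantelli argument in $z$ (not in $\omega$) show that for a.e.\ $z$ in each ball $B(z_0,\gamma)$, the covering fails for all but finitely many $n$; ranging $z_0$ over $\mathcal{D}$ and $\epsilon$ over a countable sequence upgrades this to a.e.\ $z\in\mathbb{C}$. Your ``Fubini'' closing remark is also unneeded: once the covering hypothesis is verified for a.e.\ $z$, Lemma \ref{L:as-criterion} already delivers the a.s.\ lower bound at each such $z$, which is the statement of (II).

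Two smaller points. First, your selection of indices needs the two-sided control of the paper's $J_n(z_0,\epsilon)$ — one needs both $|f_{n,k}||z_0|^k\ge e^{n(V(z_0)-\epsilon)}$ and the matching upper bound, the latter to control the ratio $f_{n,k_2}/f_{n,k_1}$ when invoking Lemma \ref{L:poly-estimate}. Second, the paper works locally near a fixed $z_0\in\mathcal{D}$ and transfers the index set $J_n(z_0,\epsilon)$ to nearby $z\in B(z_0,\gamma)$ (gaining a controlled loss $e^{-\epsilon n}$ in $|w_{n,k}(z)|$); this localization is what makes the measure estimate from Lemma \ref{L:poly-estimate} summable on a fixed ball and hence amenable to Borel--Cantelli in $z$.
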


To prove the above theorem, we need a simple lemma bounding the Lebesgue measure of the set where a polynomial can take small values. Here and throughout the remainder of this section, $\mathfrak{m}$ is Lebesgue measure in $\mathbb{C}$. 
\begin{lemma}
\label{L:poly-estimate}
Let $P_n(z)$ be a degree $n$ polynomial with leading coefficient $c$. Then for any $r > 0$,
$$
\mathfrak{m} \{ z : |P_n(z)| \le r^n \} \le \pi n r^2c^{-2/n}.
$$
\end{lemma}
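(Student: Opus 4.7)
The approach is the standard factorization argument. Write $P_n(z) = c \prod_{i=1}^n (z - z_i)$, where $z_1, \dots, z_n$ are the roots of $P_n$ (with multiplicity). Then the condition $|P_n(z)| \le r^n$ becomes $\prod_{i=1}^n |z - z_i| \le r^n/|c|$, i.e., the geometric mean of the quantities $|z - z_i|$ is at most $r/|c|^{1/n}$.

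The key observation is that if the geometric mean of $n$ nonnegative numbers is at most $s$, then at least one of them is at most $s$ (since otherwise all would exceed $s$ and the product would exceed $s^n$). Applying this with $s = r/|c|^{1/n}$ gives
\begin{equation*}
\{z : |P_n(z)| \le r^n\} \;\subseteq\; \bigcup_{i=1}^n \bigl\{z : |z - z_i| \le r/|c|^{1/n}\bigr\}.
\end{equation*}

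Each of the $n$ disks on the right has Lebesgue measure $\pi \bigl(r/|c|^{1/n}\bigr)^2 = \pi r^2 |c|^{-2/n}$, so by subadditivity the union has measure at most $\pi n r^2 |c|^{-2/n}$, which is the desired bound. There is no real obstacle here; the only thing to check is the geometric-mean observation, which is a one-line contradiction argument.
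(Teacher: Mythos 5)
Your proof is correct and follows the same route as the paper: factor $P_n$, observe that $|P_n(z)| \le r^n$ forces $z$ to lie within distance $r|c|^{-1/n}$ of some root, and bound the measure of the resulting union of $n$ disks. The paper simply phrases the key step as the contrapositive of your geometric-mean observation.
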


\begin{proof}
Let $z_1, \dots, z_n$ be the roots of $P_n$ and let $\Delta(z_i, r)$ be the closed ball of radius $r$ around $z$. For any $r > 0$, if $z \notin \Delta(z_i, rc^{-1/n})$ for all $i  \in \{1, \dots, n\}$, then 
$
|P_n(z)| > r^n.
$
The measure of $\bigcup_{i=1}^n \Delta(z_i, r)$ is at most $\pi n r^2 c^{-2/n}$. 
\end{proof}

Note that the factor of $n$ in Lemma \ref{L:poly-estimate} can be improved upon by Cartan's estimate (see \cite{Lev}, Lecture 11). We do not need this level of precision here.

\begin{proof}[Proof of Theorem \ref{T:general-KZ}.]
Conclusion (I) holds by Lemma \ref{L:as-bd} (note that Condition \eqref{E:KZ-upper} is the same as Condition \eqref{meas}).
\medskip

%
%
\noindent We now prove (II). By (I), it is enough to show that for almost every $z \in \mathbb{C}$, that
\begin{align}
\label{E:as-low}
\liminf_{n \to \infty} \frac{1}n \log |G_n(z)|  \ge V(z) \qquad \text{ almost surely}.
\end{align}
We want to apply Lemma \ref{L:as-criterion} to prove \eqref{E:as-low}. Fix $\epsilon > 0$, let $z_0 \in \mathcal{D}$, and define
$$
J_n(z_0, \epsilon) =\left \{k \in [0, n] : e^{n(V(z_0) - \epsilon)} \le |f_{n, k}||z_0|^k \le e^{n(V(z_0) + \epsilon)} \right\}.
$$
By condition (ii) on the coefficients $f_{n, k}$, there exists a $\delta > 0$ such that for all large enough $n$, the lower bound above holds for at least $n^{2/3 + \delta}$ values of $k$. Moreover, for large enough $n$, the upper bound holds for all $k$ by condition (i). Therefore $|J_n(z_0, \epsilon)| \ge n^{2/3 + \delta}$ for large enough $n$. 

\medskip

Now, for $z \in \mathbb{C}$, let $||(f, z)_n||$ be the Euclidean norm of the vector 
$
(f_{n, 0}, f_{n, 1}z, \dots f_{n, n}z^n).
$
Let 
$$
w_{n, k}(z) = \frac{f_{n, k}z^k}{||(f, z)_n||} \quad \text{and define} \quad \mathcal{W}_n(z) = \{ w_{n, k}(z) : k \in \{0, 1, \dots n\}\}.
$$
Since $z_0 \ne 0$ (note that $0 \notin \mathcal{D}$), we can find a $\gamma > 0$ such that if $z \in B(z_0, \gamma)$, then
$$
\frac{w_{n, k}(z)}{w_{n, k}(z_0)} = \frac{|f_{n, k} z^k|}{|f_{n, k} z_0^k|} \ge e^{-\epsilon n}
$$
for any $k, n$. Therefore for large enough $n$, if $k \in J_n(z_0, \epsilon)$ and $z \in B(z_0, \gamma)$, then $|w_{n, k}(z)| \ge e^{-3\epsilon n}.$ 

\medskip

 Now suppose that $z \in B(z_0, \gamma)$ is such that $\mathcal{W}_{n}(z)$ can be covered by $n^{2/3 + \delta/2}$ balls of radius $e^{-7\epsilon n}$. As long as $n$ is large enough, there must exist $k_1 < k_2 \in J_n(z_0, \epsilon)$ such that $|k_1 - k_2| \le n^{1 - \delta/2}$ and $|w_{n, k_1}(z) - w_{n, k_2}(z)| \le e^{-6 \epsilon n}$. We can write
\begin{equation}
\label{E:w-k-1-2}
|w_{n, k_1}(z) - w_{n, k_2}(z)| = |w_{n, k_1}(z)|\left|\frac{f_{n, k_2}}{f_{n, k_1}}z^{k_2 - k_1} - 1 \right| \ge e^{-3 \epsilon n}\left|\frac{f_{n, k_2}}{f_{n, k_1}}z^{k_2 - k_1} - 1\right|.
\end{equation}
Since both $k_1, k_2 \in J_n(z_0, \epsilon)$, we have that $|f_{n, k_2}/f_{n, k_1}| \ge e^{-2\epsilon n}|z_0|^{k_1 - k_2}$. Using this we can apply Lemma \ref{L:poly-estimate} to \eqref{E:w-k-1-2} get that 
$$
\mathfrak{m} \Big\{ z \in B(z_0, \gamma) : |w_{n, k_1}(z) - w_{n, k_2}(z)| < e^{-6 \epsilon n} \Big\} \le \pi n |z_0|^2 e^{-10\epsilon n^{\delta/2}}.
$$
Therefore by a union bound, 
\begin{equation*}
L_n := \mathfrak{m} \bigg\{ z \in B(z_0, \gamma) : \text{ $\mathcal{W}_{n}(z)$ can be covered by $n^{2/3 + \delta/2}$ balls of radius $e^{-7\epsilon n}$} \bigg\}
\end{equation*}
is at most $n^3 \pi |z_0|^2 e^{-10\epsilon n^{\delta/2}}$ for all large enough $n$. The sequence $L_n$ is summable in $n$, so by the Borel-Cantelli lemma, there exists a $\delta > 0$ such that for almost every $z \in B(z_0, \gamma)$, the set $\mathcal{W}_{n}(z)$ can be covered by $n^{2/3 + \delta}$ balls of radius $e^{-7\epsilon n}$ for at most finitely many $n$. 

\medskip

This holds for every $z_0 \in \mathcal{D}$ for some $\gamma$ and $\delta$. Therefore we can extend this result to get that for almost every $z \in \mathbb{C}$, there exists a $\delta > 0$ such that the set $\mathcal{W}_{n}(z)$ can be covered by $n^{2/3 + \delta}$ balls of radius $e^{-7\epsilon n}$ for at most finitely many $n$.

\medskip

 Since $\epsilon > 0$ was arbitrary, this implies that for almost every $z \in \mathbb{C}$, for every $\epsilon > 0$ there exists a $\delta > 0$ such that the set $\mathcal{W}_{n}(z)$ can be covered by $n^{2/3 + \delta}$ balls of radius $e^{-7\epsilon n}$ for at most finitely many $n$. 
By Lemma \ref{L:as-criterion}, this implies \eqref{E:as-low} for almost every $z \in \mathbb{C}$.
\end{proof}

We can now use Theorem \ref{T:general-KZ} to prove almost sure convergence of the normalized counting measure of the zeros for $G_n$.

\begin{theorem}
\label{T:general-KZ-roots}
Let $G_n(z, \omega)$ be as in \eqref{E:Gn-def}, where the coefficients $f_{n, k}$ satisfy conditions (i) and (ii). Let $Z_{G_n}(\omega)$ be the normalized counting measure of the zeros of $G_n$. Then for almost every $\omega \in \Omega$, we have that
$$
Z_{G_n}(\omega) \to \frac{1}{2\pi} \Delta V(z) \qquad \text{ in the weak* topology.}
$$
\end{theorem}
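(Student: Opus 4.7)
The plan is to combine Theorem \ref{T:general-KZ} with Theorem \ref{b} via a Fubini argument, essentially following the template of the proof of Theorem \ref{as} but with the almost-everywhere pointwise convergence of Theorem \ref{T:general-KZ}(II) substituted for convergence on a pre-specified dense set.

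First I would swap quantifiers. Theorem \ref{T:general-KZ}(II) says that for Lebesgue-a.e.\ $z \in \mathbb{C}$, the event $A(z) := \{\omega : \tfrac{1}{n}\log|G_n(z,\omega)| \to V(z)\}$ has full probability. Since $(z,\omega) \mapsto \tfrac{1}{n}\log|G_n(z,\omega)|$ is jointly measurable, so is $\mathbf{1}_{A(z)}(\omega)$, and Fubini applied to this indicator on $\mathbb{C}\times\Omega$ gives that almost surely in $\omega$, the convergence $\tfrac{1}{n}\log|G_n(z,\omega)| \to V(z)$ holds for $\mathfrak{m}$-a.e.\ $z \in \mathbb{C}$. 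Intersecting this full-probability event with the one provided by Theorem \ref{T:general-KZ}(I), I fix a generic $\omega$ for which both conclusions hold.

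Next I would apply Theorem \ref{b} pointwise in $\omega$. For the fixed $\omega$, the set $E(\omega) := \{z : \tfrac{1}{n}\log|G_n(z,\omega)| \to V(z)\}$ has full Lebesgue measure and is therefore dense in $\mathbb{C}$, so one may extract a countable dense subset $\{z_i\}_{i \in \mathbb{N}} \subset E(\omega)$. The function $V$ is continuous by assumption (i) on the coefficients $f_{n,k}$, the sequence $\psi_n := \tfrac{1}{n}\log|G_n(\cdot,\omega)|$ lies in $sh(\mathbb{C})$ and is locally bounded above by part (I), and $(\limsup_n \psi_n)^* \leq V^* = V$ by the upper bound in \eqref{E:KZ-upper}. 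Theorem \ref{b} therefore yields
$$
\frac{1}{n}\log|G_n(\cdot,\omega)| \longrightarrow V \quad \text{in } L^1_{\mathrm{loc}}(\mathbb{C}).
$$

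Finally, $L^1_{\mathrm{loc}}$ convergence implies convergence in the sense of distributions, and $\tfrac{1}{2\pi}\Delta$ is continuous on distributions, so
$$
Z_{G_n}(\omega) \;=\; \frac{1}{2\pi}\Delta\!\left(\frac{1}{n}\log|G_n(\cdot,\omega)|\right) \longrightarrow \frac{1}{2\pi}\Delta V
$$
as distributions on $\mathbb{C}$. The growth condition $V(z) - \log|z| = O(1)$ ensures $\tfrac{1}{2\pi}\Delta V$ is a probability measure, and each $Z_{G_n}$ is already a probability measure, so distributional convergence of these nonnegative Radon measures of equal total mass upgrades to weak* convergence.

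The only nontrivial step is the quantifier swap in the first paragraph, which is essentially cosmetic once joint measurability is noted; the rest is a direct application of the framework the paper has already assembled.
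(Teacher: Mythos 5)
Your argument is correct and reaches the conclusion by essentially the same machinery the paper uses. Where you deviate is in how the countable dense set of $z$-values is produced: the paper's one-line proof invokes Theorem \ref{as}, whose hypothesis (iii) is supplied simply by observing that the full-Lebesgue-measure set of $z$ from Theorem \ref{T:general-KZ}(II) is automatically dense in $\mathbb{C}$ (its complement is a null set, hence misses no open ball), so a \emph{deterministic} countable dense subset $\{z_i\}$ can be fixed in advance and the countably many associated full-probability events intersected. Your Fubini quantifier swap --- producing for almost every $\omega$ an $\omega$-dependent full-measure set of good $z$ and then extracting an $\omega$-dependent dense subsequence to feed directly into Theorem \ref{b} --- accomplishes the same thing but is an unnecessary detour that requires the joint-measurability remark and bypasses the already-packaged Theorem \ref{as}. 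Both routes finish identically: $L^1_{\mathrm{loc}}$ convergence gives distributional convergence of $\frac{1}{2\pi}\Delta\bigl(\frac{1}{n}\log|G_n|\bigr)$ to $\frac{1}{2\pi}\Delta V$, and the growth condition on $V$ rules out escape of mass and upgrades this to weak* convergence of probability measures, a point you spell out and that Theorem \ref{as} leaves implicit.
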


\begin{proof}
This follows immediately from Theorem \ref{T:general-KZ} and Theorem \ref{as} (note that the limit $V(z)$ in Theorem \ref{T:general-KZ} is subharmonic).
\end{proof}

\begin{corollary}
\label{T:KZ-prob}
Let $G_n(z,\omega)$ be as in \eqref{E:Gn-def}, where the coefficients $f_{n, k}$ satisfy conditions (i) and (ii). Suppose the $\xi_k$
are non-degenerate i.i.d. complex valued random variables satisfying (\ref{meas}). Then
$$Z_{G_n}(\omega)\rightarrow \frac{1}{2\pi}\Delta V(z)$$  in probability in the weak* topology.
\end{corollary}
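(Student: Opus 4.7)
My plan is to mimic the proof of Theorem \ref{goal}, but with the triangular-array Kolmogorov-Rogozin lower bound of Theorem \ref{convprob} replaced by an adapted version suitable for the $f_{n,k}$ setting. The abstract machinery of Theorems \ref{as} and \ref{prob} only uses continuity of the target (here $V$ is continuous by condition (i)) and the upper/lower bounds on $\frac{1}{n}\log|H_n|$; the proofs go through verbatim with $V_K$ replaced by $V$ and $\mu_K$ replaced by $\frac{1}{2\pi}\Delta V$, which is a probability measure because $V$ is subharmonic with $V(z)-\log|z|$ bounded at infinity. So it suffices to verify the two hypotheses of the appropriately generalized Theorem \ref{prob} for $G_n$.

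For hypothesis (i) (upper bound along subsequences), I would follow Lemma \ref{L:prob-bd} almost verbatim. From \eqref{meas}, for each $\epsilon>0$ the events $\Omega_{n,\epsilon}=\{|\xi_i|\le e^{\epsilon n}\text{ for }i\le n\}$ satisfy $\mathbb{P}(\Omega_{n,\epsilon}^c)=o(1)$, so along any subsequence $Y$ I extract $Y_1\subset Y$ on which $\sum_{n\in Y_1}\mathbb{P}(\Omega_{n,\epsilon}^c)<\infty$ for $\epsilon$ in a dense set of positive reals. Borel--Cantelli then gives that almost surely $|\xi_i|\le e^{\epsilon n}$ for every $i\le n$, for all sufficiently large $n\in Y_1$. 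Combining with
$$|G_n(z)|\le \bigl(\max_{i\le n}|\xi_i|\bigr)\sum_{k=0}^n|f_{n,k}||z|^k$$
and the locally uniform convergence in \eqref{E:log-Cz} yields the limsup bound $\limsup_{n\in Y_1}\frac{1}{n}\log|G_n(z,\omega)|\le V(z)$ locally uniformly, hence local boundedness above of the family as well.

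For hypothesis (ii) (pointwise convergence in probability on a countable dense set), I pick $\{z_i\}$ dense inside $\mathcal{D}$, which is possible since $\mathcal{D}^c$ has Lebesgue measure zero, hence $\mathcal{D}$ is dense in $\mathbb{C}$. Fix $z_0\in\mathcal{D}$ and set $a_{n,i}=f_{n,i}z_0^i$. The upper bound $\limsup\frac{1}{n}\log|G_n(z_0)|\le V(z_0)$ in probability follows from step one. For the matching lower bound I adapt the proof of Theorem \ref{convprob}: Kolmogorov-Rogozin combined with the rescaling identity \eqref{E:Q-fact-1} gives
$$\mathbb{P}\!\left(\tfrac{1}{n}\log|G_n(z_0)|<V(z_0)-\epsilon\right)\le \frac{C}{\sqrt{\sum_{i=0}^n\bigl(1-\mathcal{Q}(\xi_i\,;\,e^{n(V(z_0)-\epsilon)}/|a_{n,i}|)\bigr)}}.$$
By condition (ii) applied at $z_0\in\mathcal{D}$ with tolerance $\epsilon/2$, there exist $\delta>0$ and at least $n^{2/3+\delta}$ indices $i\le n$ with $|a_{n,i}|\ge e^{n(V(z_0)-\epsilon/2)}$, so for each such $i$ the rescaled radius $e^{n(V(z_0)-\epsilon)}/|a_{n,i}|\le e^{-\epsilon n/2}\to 0$. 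Non-degeneracy of $\xi_0$ then yields $1-\mathcal{Q}(\xi_i\,;\,\cdot)\ge 1-D_2>0$ for all large $n$ (as in the proof of Theorem \ref{convprob}), so the denominator is at least of order $n^{1/3+\delta/2}\to\infty$, forcing the probability to $0$.

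The main obstacle I anticipate is a cosmetic one: verifying that Theorems \ref{as} and \ref{prob}, which were stated for the orthonormal ensemble \eqref{r1} and the Green function $V_K$, really do extend without change to our $(G_n,V)$ setting. A quick inspection shows their proofs never use anything beyond subharmonicity of $\frac{1}{n}\log|H_n|$, continuity of the target, and hypotheses (i)--(ii) themselves, so this extension is immediate. Applying the extended Theorem \ref{prob} with the two hypotheses just verified then yields $Z_{G_n}\to\frac{1}{2\pi}\Delta V$ in probability in the weak$^*$ topology, as claimed.
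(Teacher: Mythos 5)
Your proposal is correct, but it takes a genuinely different route for the lower bound (the pointwise hypothesis (ii) of Theorem \ref{prob}) than the paper does. The paper's proof of Corollary \ref{T:KZ-prob} simply cites the almost-sure lower bound \eqref{E:as-low} from the proof of Theorem \ref{T:general-KZ}, which relies on the Nguyen--Vu small ball probability machinery (Theorem \ref{T:small-ball} and Lemma \ref{L:as-criterion}); it observes that this bound already holds under mere non-degeneracy of the $\xi_k$, so it gives the required convergence in probability at almost every $z$ for free. Your argument instead re-derives the lower bound directly from the Kolmogorov--Rogozin inequality in the style of Theorem \ref{convprob}, adapted to the triangular array $a_{n,i}=f_{n,i}z_0^i$: condition (ii) on the coefficients gives at least $n^{2/3+\delta}$ indices with $|a_{n,i}|\ge e^{n(V(z_0)-\epsilon/2)}$, the rescaled concentration radii shrink to zero, non-degeneracy makes each summand at least $1-D_2>0$, and the denominator grows like $n^{1/3+\delta/2}$. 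This is correct, and in fact the index-counting step is easier here than in Theorem \ref{convprob} because condition (ii) hands you the count directly. The trade-off is that the paper's route is essentially one line given its preceding results, whereas yours is more self-contained and avoids any appeal to the deeper small-ball estimate, using only Kolmogorov--Rogozin (which is already deployed elsewhere in the paper). Both correctly observe that the abstract Theorems \ref{as} and \ref{prob} extend verbatim with $V_K$ replaced by the continuous subharmonic $V$, and that $\mathcal{D}$ is dense since its complement is Lebesgue-null.
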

\begin{proof}
We need to check the conditions of Theorem \ref{prob}. Condition (i) follows by using the reasoning of Lemma \ref{L:prob-bd}. Condition (ii) then follows from the lower bound \eqref{E:as-low} shown in the proof of Theorem \ref{T:general-KZ} (note that this bound only requires the non-degeneracy of the $\xi_k$s).
%
%
%
%
%
\end{proof} 
 \bigskip
 \noindent\textbf{Special Cases of Theorem \ref{T:general-KZ-roots}}. \qquad  We can consider the following types of coefficients, first considered by Kabluchko and Zaporozhets in \cite{KZ}.

\medskip

Assume that there is a function $f: [0, 1] \to [0, \infty)$ satisfying the following conditions:
\smallskip

\begin{enumerate}[nosep, label=(\roman*)]
\item $f(t)$ is positive and continuous for all $t$.
\item{$\lim_{n \to \infty} \sup_{k \in [0, n]} \left| |f_{n, k}|^{1/n} - f(k/n)\right| = 0.$}
\end{enumerate}

\medskip

It is easy to check that if the coefficients $f_{n, k}$ satisfy the above properties, then they satisfy the conditions required for Theorem \ref{T:general-KZ}. This gives us the following corollary.

\begin{corollary}
\label{T:KZ-as}
Let $G_n(z) = \sum_{k=0}^n \xi_k f_{n, k} z^k$ be the random polynomial with coefficients $f_{n, k}$ above, where the $\xi_i$s are non-degenerate i.i.d. complex random variables satisfying $\mathbb{E}\log(1 + |\xi_0|) < \infty$. Let $Z_{G_n}$ be the normalized counting measure of the zeros of $G_n$. For each $z$, the function $V(z)$ of \eqref{E:log-Cz} is given by
$$
V(z) = \sup_{t \in [0, 1]} \log |z|^t f(t).
$$
Then for almost every $\omega \in \Omega$, we have that
$$
Z_{G_n}(\omega) \to \frac{1}{2\pi} \Delta V(z) \qquad \text{ in the weak* topology.}
$$
\end{corollary}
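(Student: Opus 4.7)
The plan is to verify that the coefficients $f_{n,k}$ satisfy conditions (i) and (ii) stated just before Theorem \ref{T:general-KZ}, with the given $V(z) = \sup_{t\in[0,1]}\log(|z|^t f(t))$; then Theorem \ref{T:general-KZ-roots} immediately yields the conclusion.

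Since $f$ is positive and continuous on the compact interval $[0,1]$, it is bounded below by some $c > 0$, so hypothesis (ii) on the array upgrades to $\frac{1}{n}\log|f_{n,k}| - \log f(k/n) \to 0$ uniformly in $k \in \{0,\dots,n\}$. Using the sandwich
$$
\max_k\Bigl[\tfrac{1}{n}\log|f_{n,k}| + \tfrac{k}{n}\log|z|\Bigr] \le \tfrac{1}{n}\log\sum_{k=0}^n |f_{n,k}||z|^k \le \tfrac{\log(n+1)}{n} + \max_k\Bigl[\tfrac{1}{n}\log|f_{n,k}| + \tfrac{k}{n}\log|z|\Bigr],
$$
a standard Riemann-sum argument (the grid $\{k/n\}$ becomes dense in $[0,1]$ and $(z,t) \mapsto \log f(t) + t\log|z|$ is jointly continuous on compact subsets of $(\mathbb{C}\setminus\{0\})\times[0,1]$) gives $V(z)$ as the locally uniform limit. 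This proves the convergence in condition (i), and continuity of $V$ is immediate. For subharmonicity, each prelimit equals $\log\sum_k e^{u_{n,k}(z)}$ with $u_{n,k}(z) = \log|f_{n,k}| + k\log|z|$ subharmonic; since log-sum-exp is convex and non-decreasing in each variable, each prelimit is subharmonic, and so is its locally uniform limit $V$. For the growth condition, once $\log|z|$ exceeds $\max_{t\in[0,1]}\log f(t) - \log f(1)$, the supremum is attained at $t=1$, giving $V(z) = \log|z| + \log f(1)$, so $V(z) - \log|z|$ is bounded near infinity.

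For condition (ii), I would take $\mathcal{D} = \mathbb{C}\setminus\{0\}$. Given $z \in \mathcal{D}$ and $\epsilon > 0$, choose any $t^* \in [0,1]$ achieving $V(z)$. By continuity of $t \mapsto \log f(t) + t\log|z|$, there is a non-degenerate interval $I \subset [0,1]$ on which this function exceeds $V(z) - \epsilon/2$; the uniform approximation then ensures that for all large $n$, every integer $k$ with $k/n \in I$ satisfies $|f_{n,k}||z|^k \ge e^{n(V(z)-\epsilon)}$. The number of such $k$ grows linearly in $n$, easily exceeding $n^{2/3+\delta}$ for any $\delta < 1/3$. With conditions (i) and (ii) both verified, Theorem \ref{T:general-KZ-roots} delivers the almost sure weak* convergence $Z_{G_n} \to \frac{1}{2\pi}\Delta V$.

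The main (and mild) obstacle is handling the two uniformities in condition (i) simultaneously — uniform in $k$ for fixed $z$, together with local uniformity in $z$ — but the uniform lower bound $f \ge c > 0$ controls all logarithms and reduces the verification to routine Riemann-sum bookkeeping.
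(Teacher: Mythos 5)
Your proposal follows the same route the paper intends (the paper itself declares the verification of conditions (i) and (ii) to be ``easy to check'' and gives no details): confirm that $f_{n,k}$ satisfy the two hypotheses on the array with $V(z)=\sup_{t\in[0,1]}[\log f(t)+t\log|z|]$, then invoke Theorem \ref{T:general-KZ-roots}. The upgrade of hypothesis (ii) on the array to a uniform statement about $\tfrac1n\log|f_{n,k}|$ (using $f\ge c>0$), the log-sum/max sandwich, the Riemann-sum passage to $V$, the log-sum-exp convexity argument for subharmonicity, and the linear count giving condition (ii) with $\mathcal D=\mathbb C\setminus\{0\}$ are all correct and complete the verification.

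One small inaccuracy: the claim that ``once $\log|z|$ exceeds $\max_t\log f(t)-\log f(1)$ the supremum is attained at $t=1$'' is not true in general for merely continuous $f$ (the inequality $(1-t)\log|z|\ge\log f(t)-\log f(1)$ can fail near $t=1$ where $1-t$ is small; e.g.\ $f(t)=1+\sqrt{1-t}$). The intended conclusion nevertheless holds by a simpler bound: writing $M=\max_t|\log f(t)|$, for $|z|\ge 1$ one has $\log f(1)+\log|z|\le V(z)\le M+\log|z|$, so $V(z)-\log|z|$ is bounded; this is all that condition (i) requires. With that repair the argument is complete.
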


%
%

We can also use Theorem \ref{T:general-KZ-roots} to look at the roots of certain random orthogonal polynomial ensembles. Since Theorem \ref{T:general-KZ-roots} allows for an array of coefficients $\{f_{n, k}\}$ rather than just a sequence $\{f_n\}$, we can consider orthogonal polynomials with respect to both a weight function and a measure. Random polynomial ensembles of this form have been previously studied in \cite{Ba1}, \cite{Ba2}, and \cite{BL}.  We give an example where condition (ii) on the coefficients $f_{n, k}$ can be directly verified.

\medskip

Let $K \subset \mathbb{C}$ be compact, and let $S: K \to \mathbb{R}$ be a real-valued continuous function. Define the weighted Green function
\begin{align*}
V_{K, S}(z) &= \text{sup} \left \{ \frac{1}{\text{deg}(p)} \log |p(z)| \; \Big| \; p \in \mathcal{P}_n \;\; \text{and} \;\; ||pe^{-nS}||_K \le 1 \right \} \\
&= \sup \left \{ u \Big| \; u \in \mathcal{L}(\mathbb{C}), u \le S \; \text{ on }\;  K \right\}.
\end{align*}
We denote by $V^*_{K, S}$ the upper semicontinuous regularization of $V_{K, S}$. The distribution $\frac{1}{2\pi} \Delta V^*_{K, S}$ is a probability measure on $K$.

\medskip

We say that $K$ is \textit{locally regular} if for all $a \in K$ and $r > 0$, the function $V_{K \cap \Delta(a, r)}$ is continuous at $a$, where $\Delta(a, r)$ denotes the closed disk centred at $a$ with radius $r$. If $K$ is locally regular and $S$ is continuous, then $V_{K, S}$ is continuous, and so $V_{K, S} = V^*_{K, S}$.

\medskip

Let $\tau$ be a finite, positive, Borel measure on $K$. We say that $\tau$ satisfies the strong Bernstein-Markov property if for all $S$ continuous and $\epsilon > 0$ there is a constant $C = C(S, \epsilon)$ such that for every $n$, we have that
\begin{equation}
\label{E:BM-strong}
||pe^{-nS}||_K \le Ce^{\epsilon n} ||p||_{L^2(e^{-2nS}\tau)}
\end{equation}
for all $p \in \mathcal{P}_n$.

Now consider a locally regular non-polar compact set $K$, a continuous function $S$, and a finite measure $\tau$ on $K$ satisfying the strong Bernstein-Markov property. For each $n$, we define orthonormal polynomials $\{q_0^{(n)}, \dots, q_n^{(n)}\}$ by applying the Gram-Schmidt procedure to the monomials $\{1, z, \dots, z^n\}$ in $L^2(e^{-2nS}\tau)$. For every $z \in \mathbb{C}$, we have that
\begin{equation}
\label{E:bergman-weight}
V_{K, S}(z) = \lim_{n \to \infty} \frac{1}{2n} \log \left( \sum_{j=0}^n |q_j^{(n)}(z)|^2 \right),
\end{equation}
where the convergence is uniform on compact subsets of $\mathbb{C}$ (see \cite{BL}). 

\medskip

We say that $K$ is circularly symmetric if for every $z \in \mathbb{C}$, we have that $z \in K$ if and only $|z| \in K$. $S$ is circularly symmetric if $S(z) = S(|z|)$, and $\tau$ is circularly symmetric if for any rotation $R$, the pushforward measure $R_*\tau$ is equal to $\tau$. 

\medskip

If $K, \tau$, and $S$ are all circularly symmetric, then the polynomials $q_j^{(n)}$ are of the form $f_{n, j}z^j$. In this case we can apply Theorem \ref{T:general-KZ-roots} to random polynomials formed from the set $\{q_j^{(n)}\}$.

\medskip

\begin{corollary}
\label{C:circ-sym}
Suppose that $K \subset \mathbb{C}$ is a locally regular non-polar compact set, $S$ is a continuous function on $K$, and $\tau$ is a finite measure on $K$ satisfying the strong Bernstein-Markov property. Suppose additionally that $K, \tau$, and $S$ are all circularly symmetric, and let 
$$
\{q_j^{(n)} : j \in \{0, 1, \dots, n \}, n \in \mathbb{N} \}
$$
be as constructed above. Define
\begin{equation}
\label{E:Hn}
G_n(z, \omega) = \sum_{j=0}^n \xi_j q_j^{(n)}(z),
\end{equation}
where $\{\xi_0, \xi_1, \dots \}$ is a sequence of non-degenerate i.i.d complex random variables such that $\mathbb{E} \log (1 + |\xi_0|) < \infty$. Let $Z_{G_n}$ be the normalized counting measure of the zero of $G_n$.
Then for almost every $\omega \in \Omega$, we have that
$$
Z_{G_n} \to \frac{1}{2\pi} \Delta V_{K, S} \qquad \text{in the weak* topology of probability measures on $\mathbb{C}$.}
$$
\end{corollary}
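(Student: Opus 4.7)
The plan is to reduce to Corollary \ref{T:KZ-as} by showing that each $q_j^{(n)}$ is a monomial and that the coefficient moduli $|f_{n,k}|$ satisfy the uniform-in-$k$ asymptotic required there.

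First, I would use the joint rotational invariance of $K$, $S$, and $\tau$ to observe that the monomials $\{1,z,\dots,z^n\}$ are already pairwise orthogonal in $L^2(e^{-2nS}\tau)$: the angular factor of $\int z^j\overline{z}^k e^{-2nS(z)}\,d\tau(z)$ integrates to zero when $j\neq k$ by circular invariance of the weighted measure. Hence Gram--Schmidt only normalizes and $q_j^{(n)}(z)=f_{n,j}z^j$ with
$$|f_{n,j}|^{-2}=\int_K|z|^{2j}e^{-2nS(z)}\,d\tau(z).$$

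Next, set $F(\alpha):=\sup_{z\in K}\bigl(\alpha\log|z|-S(z)\bigr)$. I would establish the uniform estimate
$$\lim_{n\to\infty}\;\sup_{0\le k\le n}\Bigl|\tfrac{1}{n}\log|f_{n,k}|+F(k/n)\Bigr|=0.$$
The elementary bound $|f_{n,k}|^{-2}\le \tau(K)\,e^{2nF(k/n)}$ gives $\tfrac{1}{n}\log|f_{n,k}|\ge -F(k/n)+o(1)$. For the matching upper bound I would apply the strong Bernstein--Markov property \eqref{E:BM-strong} to the monomial $p(z)=z^k\in\mathcal{P}_n$; circular symmetry of $S$ yields $\|z^k e^{-nS}\|_K=e^{nF(k/n)}$, so \eqref{E:BM-strong} gives $|f_{n,k}|\le Ce^{\epsilon n-nF(k/n)}$ with $C$ independent of $k$. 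Being a supremum of affine functions of $\alpha$, $F$ is convex and lower semicontinuous, and finite on $[0,1]$ by compactness of $K$; convexity yields continuity on $(0,1)$, while at the endpoints the bound $F(\alpha)\le(1-\alpha)F(0)+\alpha F(1)$ combined with lower semicontinuity upgrades this to continuity on $[0,1]$. Thus $f(\alpha):=e^{-F(\alpha)}$ is positive and continuous on $[0,1]$, and $|f_{n,k}|^{1/n}\to f(k/n)$ uniformly.

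Finally, Corollary \ref{T:KZ-as} applies and yields $Z_{G_n}\to\tfrac{1}{2\pi}\Delta V$ almost surely in the weak$^*$ topology, with $V(z)=\sup_{t\in[0,1]}\bigl(t\log|z|-F(t)\bigr)$. To identify $V$ with $V_{K,S}$, I would use \eqref{E:bergman-weight} together with the sandwich $\max\le\sum\le(n{+}1)\max$ (as in the proof of Lemma \ref{seq}) to rewrite $V_{K,S}(z)=\lim_n\tfrac{1}{n}\max_{0\le k\le n}\log\bigl(|f_{n,k}||z|^k\bigr)$, and then substitute the uniform asymptotic from the previous step: since $\alpha\mapsto \alpha\log|z|-F(\alpha)$ is continuous, the limit of the grid-maxima equals the supremum over $[0,1]$, namely $V(z)$. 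The main technical obstacle is the uniform upper bound on $|f_{n,k}|$; the strong Bernstein--Markov hypothesis applied to pure monomials is precisely what converts the easy $L^2$-lower bound into a matching sup-norm upper bound with constants uniform in $k$.
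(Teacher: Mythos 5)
Your proof is correct and reaches the conclusion by a genuinely different route than the paper. You reduce to Corollary~\ref{T:KZ-as} by proving the uniform coefficient asymptotic $\sup_{0\le k\le n}\bigl|\tfrac{1}{n}\log|f_{n,k}|+F(k/n)\bigr|\to 0$ with $F(\alpha)=\sup_{z\in K}(\alpha\log|z|-S(z))$, getting the lower bound on $|f_{n,k}|$ trivially from the $L^2$-integral formula and the matching upper bound by applying the strong Bernstein--Markov inequality~\eqref{E:BM-strong} to the pure monomial $z^k$ (a clean observation). The paper instead checks condition~(ii) of Theorem~\ref{T:general-KZ} directly: it shows $f_{n,j_2}/f_{n,j_1}\ge e^{-\epsilon n}$ whenever $|j_1-j_2|\le n^{3/4}$ via elementary estimates splitting the defining integral into $B(0,\delta)$ and its complement (equations~\eqref{E:M-def}--\eqref{E:contain-2}), handling the $j=0$ term separately via~\eqref{E:ff}. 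Your approach is tidier and proves the stronger statement that the $f_{n,k}$ actually satisfy the Kabluchko--Zaporozhets hypotheses, at the cost of needing $F$ to be continuous on all of $[0,1]$. That continuity is where the hypotheses are quietly doing work: if $0\in K$ then $\alpha\mapsto\alpha\log|0|-S(0)$ is \emph{not} affine, so your phrase ``supremum of affine functions'' requires the sup to be unaffected by discarding $z=0$. This holds at $\alpha>0$ trivially, and at $\alpha=0$ because $0$ cannot be an isolated point of a locally regular $K$ (since $V_{\{0\}}$ is discontinuous at $0$); then continuity of $S$ gives $\sup_{z\in K\setminus\{0\}}(-S(z))\ge -S(0)$. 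This is exactly the edge case the paper's~\eqref{E:ff} treats explicitly, so you should state the non-isolatedness observation rather than leave it implicit. With that small repair, your argument is complete and arguably more transparent than the paper's integral comparison.
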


\begin{proof}
As mentioned above, each of the polynomials $q_j^{(n)}(z)$ is of the form $f_{n, j} z^j$ for some real number $f_{n, j}$. The coefficients $f_{n, j}$ satisfy condition (i) on the coefficients in Theorem \ref{T:general-KZ} by \eqref{E:bergman-weight}.

\medskip

We now show that the weights $f_{n, j}$ satisfy condition (ii). For any $j$, we have that
\begin{align*}
f_{n, j} = \left( \int_K |z|^{2j}e^{-2nS(z)}d\tau(z) \right)^{-1/2}.
\end{align*}
By this formula, it is easy to see that
$$
f_{n, 0}^{1/n} \to \inf_{z \in \text{supp}(\tau)} e^{S(z)} \quad \text{and} \quad f_{n, 1}^{1/n} \to \inf_{z \in \text{supp}(\tau) \setminus \{0\}} e^{S(z)} \qquad \text{as } n \to\infty.
$$
Therefore for any $\epsilon > 0$, there exists an $n \in \mathbb{N}$ such that for all $n \ge N$, we have that
\begin{equation*}
f_{n, 0} \le e^{\epsilon n} f_{n, 1},
\end{equation*}
so for any fixed $z \in \mathbb{C} \setminus \{0\}$ and $\epsilon > 0$,  for all large enough $n$ we have that
\begin{equation}
\label{E:ff}
\max \{f_{n, j}|z|^j : j \in \{0, \dots, n\} \} \le e^{\epsilon n} \max \{f_{n, j}|z|^j : j \in \{1, \dots, n\} \}.  
\end{equation}
Therefore to prove condition (ii) it is enough to show that for any $\epsilon > 0$, there exists an $N$ such that for all $n \ge N$, if $j_1, j_2 \in \{1, \dots, n\}$ and $|j_1 - j_2| \le n^{3/4}$, then
\begin{equation}
\label{E:want-f-compare}
\frac{f_{n, j_2}}{f_{n, j_1}} \ge e^{-\epsilon n}.
\end{equation}
Combined with \eqref{E:ff}, this shows that for any $z \in \mathbb{C} \setminus \{0\}$, for all large enough $n$, at least $n^{3/4}$ values of $f_{n, j}|z|^j$ are close to the maximum value, which must itself be close to $e^{nV(z)}$ by \eqref{E:bergman-weight}. This gives condition (ii).
%
%
%
%
To prove \eqref{E:want-f-compare}, fix $\epsilon > 0$, and choose $\delta > 0$ small enough so that
$$
\max_{z, w \in B(0, \delta)} |S(z) - S(w)| \le \frac{\epsilon}4.
$$
Then for any  $n, j_1, j_2$, we have that
\begin{align}
\label{E:M-def}
e^{-\epsilon n} M &\le \frac{f^2_{n, j_2}}{f^2_{n, j_1}}, \qquad \text{ where} \\
\nonumber M &= \frac{ \int_{B(0, \delta)} |z|^{2j_1}e^{-2nS(0)} d \tau(z)+ \int_{K \setminus B(0, \delta)} |z|^{2j_1} e^{-2 n S(z)} d \tau(z)}{\int_{B(0, \delta)} |z|^{2j_2} e^{-2nS(0)}d \tau(z) + \int_{K \setminus B(0, \delta)} |z|^{2j_2} e^{-2 n S(z)} d \tau(z)}.
\end{align}
Now let $R > 1$ be large enough so that $K \subset B(0, R)$, and such that $R \ge \delta^{-1}$. For all large enough $n$, whenever $|j_1 - j_2| \le n^{3/4}$, we have that
\begin{equation}
\label{E:contain-1}
\frac{\int_{K \setminus B(0, \delta)} |z|^{2j_1} e^{-2 n S(z)} d \tau(z)}{\int_{K \setminus B(0, \delta)} |z|^{2j_2} e^{-2 n S(z)} d \tau(z)} \ge R^{-|j_1 - j_2|} \ge e^{-\epsilon n}.
\end{equation}
If $\tau(B(0, \delta) \setminus \{0\}) = 0$, then this proves \eqref{E:want-f-compare}. If not, then there exists a $\gamma > 0$ such that for every $j \in \{1, 2, \dots\}$, we have that
$$
\int_{B(0, \delta)} |z|^j d\tau(z) \le 2 \int_{B(0, \delta) \setminus B(0, \gamma)} |z|^j d\tau(z).
$$
Therefore as in \eqref{E:contain-1}, for all large enough $n$, whenever $|j_1 - j_2| \le n^{3/4}$, we have that
\begin{equation}
\label{E:contain-2}
\frac{ \int_{B(0, \delta)} |z|^{2j_1} e^{-2nS(0)} d \tau(z)}{\int_{B(0, \delta)} |z|^{2j_2} e^{-2nS(0)} d \tau(z)} \ge \frac{\int_{B(0, \delta)} |z|^{2j_1}d \tau(z)}{2\int_{B(0, \delta)\setminus B(0, \gamma)} |z|^{2j_2} d \tau(z)} \ge e^{-\epsilon n}.
\end{equation}
Combining this with \eqref{E:M-def} implies \eqref{E:want-f-compare}.
\end{proof}

\end{section}

\section{Multivariable Case}

In this section, we extend the results of Sections \ref{S:cvg-prob} and \ref{S:almost-sure} to the multivariable case. We present the both the preliminaries and theorems in the same order as they were presented in the one-variable case.
\subsection{Preliminaries}
\label{SS:prelim-multi}
In this subsection we will list some results of pluripotential theory.

\medskip

Let $D\subset\mathbb{C}^d$ be an open set. A function $u$ on $D$ is \textit{plurisubharmonic} if it
\begin{enumerate}[nosep, label = (\roman*)]
\smallskip
\item takes values in $[-\infty,+\infty)$.
\item is upper semicontinuous.
\item for each $a\in D$ and $b\in\mathbb{C}^d,$ the function
$\lambda\rightarrow u(a+\lambda b)$ is subharmonic on the set $\{\lambda\in \mathbb{C} :a+\lambda b\in D\}.$
\end{enumerate}

\medskip

We denote by $psh(D)$ the collection of plurisubharmonic functions on $D$. If $f$ is analytic on $D$ then $\log |f|\in psh(D)$.

\medskip

We say that a set $E\subset \mathbb{C}^d$ is \textit{pluripolar} if there is a non-constant plurisubharmonic function $u$ with $E\subset \{u=-\infty\}$. Plurisubharmonic functions are locally integrable  and thus pluripolar sets are again of Lebesgue measure zero.

\medskip

Let $\mathcal{P}_n$ denote the space of polynomials of (total) degree $\leq n$ (in $z=(z_1,...,z_d)$). Let
$$
\mathcal{L}(\mathbb{C}^d)=\Big \{u\in sh(\mathbb{C}^d)\;|\;\; u(z)-\log |z| \;\;\text {is bounded as}\;\; |z| \rightarrow +\infty \Big\}.
$$
If $p\in\mathcal{P}_n$, then $\frac{1}{deg(p)}\log|p|\in\mathcal{L}(\mathbb{C}^d).$ 

\smallskip

\noindent For $K\subset\mathbb{C}^d$ the Green function  of $K$ is given by
$$
V_K(z):=\sup \left \{\frac{1}{deg(p)}\log |p(z)|\;\;\Big| \;\;p \;\;
\text{is a non-constant polynomial, and}\;\; ||p||_K \leq 1\right\}.
$$
When $K$ is non-pluripolar, then $V_K^*\in \mathcal{L}(\mathbb{C}).$ $K$ is \textit{regular} when $V_K$ is continuous, i.e. $V_K=V_K^*.$ 
Again we will restrict to regular sets.
\begin{Example}
Let $K=\{(z_1,...z_d):|z_i|=1 \;\;\text{for}\;\; i=1,\dots,d\}.$ Then 
$$
V_K(z_1, \dots, z_d) =\max(0,\log|z_1|,...,\log|z_d|\}.
$$
\end{Example}

We refer to $\mathbb{N}^d $ as the set of multi-indices. For
$\alpha\in\mathbb{N}^d$ set 
$$
|\alpha|:=\alpha_1+...+\alpha_d.
$$ 
We consider the following lexicographic ordering on $\mathbb{N}^d$. For two multi-indices $\alpha, \beta$, we say that $\alpha > \beta$ if either $|\alpha| > |\beta|$, or if 
$
|\alpha| = |\beta|$ and there exists $l\in\{1,\dots,d\}$ such that $\alpha_l>\beta_l$ and $\alpha_k=\beta_k$ for $k=1,\dots,l-1.$

\medskip

The monomials will be denoted by $e_{\alpha}(z)=z^{\alpha}=z_1^{\alpha_1}z_2^{\alpha_2} \dots z_d^{\alpha_d}.$  For each $\alpha \in \mathbb{N}^d$ we define
$$
\mathcal{P}_{\alpha}:= \left\{\sum_{\beta\leq\alpha}c_{\beta}e_{\beta}(z): c_{\beta}\in\mathbb{C}\quad \text{and}\quad c_{\alpha}\neq 0 \right\}.
$$

\subsection{Construction of Random Polynomials}
\label{SS:prelim-multi}

We will consider random polynomials of the form
\begin{equation}
\label{E:Hn-multi}
H_n(z,\omega)=\sum_{|\alpha|\leq n}\xi_{\alpha}q_{\alpha}
\end{equation}
where the $\xi_{\alpha}$ are i.i.d complex-valued random variables and the $q_{\alpha} \in \mathcal{P}_\alpha$ are orthonormal polynomials as constructed below.
 
 \medskip
 
 Let $\tau$ be a finite measure on a non-polar compact set $K \subset \mathbb{C}^d$. For each multi-index $\alpha$ apply the Gram-Schmidt orthogonalization procedure to the monomials $\{e_{\beta}(z):\beta\leq\alpha\}$
in $L^2(\tau)$. Here we use the lexicographic ordering introduced earlier. This gives an array of orthonormal polynomials $\{q_{\alpha}(z)\}$ indexed on the multi-indices. 

\medskip

Assume that $\tau$ satisfies the Bernstein-Markov property. Then we have the following (see \cite{BS}):
\begin{equation*}
    V_K(z)=\lim_{n\rightarrow\infty}\frac{1}{2n}\log\left(\sum_{\alpha\leq n}|q_{\alpha}(z)|^2\right).
\end{equation*}
For $K$ regular, this convergence is locally uniform. It also follows from the Bernstein-Markov property that given $\epsilon>0$ there is an $M>0$ such that for all multi-indices $\alpha$, we have 
\begin{equation*}
|q_{\alpha}(z)|\leq Me^{|\alpha|(V_K(z)+\epsilon)}.
\end{equation*}
Let $\xi_{\alpha}$ be i.i.d non-degenerate complex random variables satisfying 
\begin{equation}
\label{multimeas}
\mathbb{P}(|\xi|>e^{|z|})=o(|z|^{-d}).
\end{equation}
Then we have the following lemmas, which can be proven in the same way as Lemma \ref{L:prob-bd} and Lemma \ref{L:as-bd}.
\begin{lemma}
\label{L:upper-multi}
Let $\{\xi_{\alpha} : \alpha \in \mathbb{N}^d\}$ be non-degenerate i.i.d. complex random variables satisfying (\ref{multimeas}). Let $H_n(z, \omega)$ be as in \eqref{E:Hn-multi}.

Then for every subsequence $Y\subset \mathbb{N}$ there is a further subsequence $Y_1\subset Y$ such that almost surely, $\{\frac{1}{n}\log |H_n| : n \in Y_1\}$ is locally bounded above and
$$
\limsup_{n\in Y_1}\frac{1}{n}\log | H_n(z, \omega) |\leq V_K(z).
$$ 
\end{lemma}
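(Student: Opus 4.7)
The plan is to mirror the proof of Lemma \ref{L:prob-bd}, adapting the union-bound count to reflect that the number of multi-indices $\alpha \in \mathbb{N}^d$ with $|\alpha| \leq n$ grows like $n^d$ rather than $n$; this is precisely why the hypothesis \eqref{multimeas} has exponent $-d$ instead of $-1$.

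First, exactly as in \eqref{E:meas-2}, the hypothesis \eqref{multimeas} implies that for every $\epsilon > 0$ we have $\mathbb{P}(|\xi_0| > e^{\epsilon n}) = o(n^{-d})$. Define the good event
$$
\Omega_{n,\epsilon} := \{\omega : |\xi_\alpha(\omega)| \leq e^{\epsilon n} \text{ for every multi-index $\alpha$ with } |\alpha| \leq n\}.
$$
Since there are $\binom{n+d}{d} = O(n^d)$ multi-indices of total degree at most $n$, a union bound gives
$$
\mathbb{P}(\Omega_{n,\epsilon}^c) \leq \binom{n+d}{d} \mathbb{P}(|\xi_0| > e^{\epsilon n}) = O(n^d) \cdot o(n^{-d}) \to 0.
$$

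Second, fix a sequence $\epsilon_k \downarrow 0$. Given any subsequence $Y \subset \mathbb{N}$, iteratively thin out to extract $Y_1 = \{n_1, n_2, \dots\} \subset Y$ by a diagonal argument so that $\sum_{s} \mathbb{P}(\Omega_{n_s, \epsilon_k}^c) < \infty$ for every $k$. By the Borel--Cantelli lemma, for each fixed $k$, almost surely there is an $s_0$ so that for all $s \geq s_0$ and all $|\alpha| \leq n_s$, we have $|\xi_\alpha(\omega)| \leq e^{\epsilon_k n_s}$. Intersecting the relevant almost sure events over the countable family $\{\epsilon_k\}$ yields a single full-measure event on which this holds for every $k$.

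Third, on this event, combine the individual coefficient bound with the Bernstein-Markov estimate $|q_\alpha(z)| \leq M e^{|\alpha|(V_K(z)+\epsilon_k)}$ from Section \ref{SS:prelim-multi} to obtain
$$
|H_n(z,\omega)| \leq \binom{n+d}{d} \cdot M e^{\epsilon_k n} \cdot e^{n(V_K(z)+\epsilon_k)}.
$$
Taking $\frac{1}{n}\log$ and sending $s \to \infty$ along $Y_1$ gives $\limsup_{n \in Y_1} \frac{1}{n}\log|H_n(z,\omega)| \leq V_K(z) + 2\epsilon_k$ for every $k$, and letting $k \to \infty$ yields \eqref{E:n-Y1}. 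Local boundedness above follows from the locally uniform convergence of $\frac{1}{2n}\log \sum_{|\alpha| \leq n}|q_\alpha(z)|^2$ to $V_K(z)$, which together with the bound $|\xi_\alpha| \leq e^{\epsilon n}$ controls $\frac{1}{n}\log|H_n|$ uniformly on compact sets.

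The only genuine point of care is the polynomial versus polynomial matching in the union bound (the $O(n^d)$ multi-indices must be absorbed by the $o(n^{-d})$ tail), and the diagonal extraction needed to produce a single subsequence $Y_1$ valid for all $\epsilon_k$ simultaneously. Everything else is a direct transcription of the one-variable argument.
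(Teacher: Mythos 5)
Your proof is correct and takes exactly the route the paper intends: the paper states that Lemma \ref{L:upper-multi} ``can be proven in the same way as Lemma \ref{L:prob-bd},'' and you do just that, trading the one-variable $O(n)$ union bound against $o(|z|^{-1})$ for the multivariable $O(n^d)$ count of multi-indices of total degree $\le n$ against the $o(|z|^{-d})$ tail in \eqref{multimeas}. Your explicit diagonal extraction over a countable sequence $\epsilon_k \downarrow 0$ is in fact slightly more careful than the paper's one-variable argument, which extracts $Y_1$ for a single fixed $\epsilon$ and then asserts the Borel--Cantelli conclusion ``for every $\epsilon>0$''; your diagonalization is the standard way to make that step fully rigorous.
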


\begin{lemma}
\label{L:multi-as-upper}
Let $\{\xi_{\alpha} : \alpha \in \mathbb{N}^d\}$ be non-degenerate i.i.d. random variables such that
\begin{equation}
\label{E:multi-as-bd}
\mathbb{E} [\log( 1+ |\xi_\alpha|)]^d < \infty.
\end{equation}
Then almost surely, $\{\frac{1}{n}\log |H_n| : n \in \mathbb{N} \}$ is locally bounded above and
$$
\limsup_{n\to \infty }\frac{1}{n}\log|H_n(z, \omega)|\leq V_K(z).
$$ 
\end{lemma}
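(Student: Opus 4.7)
The strategy is to mimic the one-variable proof of Lemma \ref{L:as-bd}, adjusting the Borel-Cantelli step to account for the fact that the index set $\{\alpha \in \mathbb{N}^d : |\alpha| \le n\}$ has polynomial (rather than linear) cardinality in $n$.

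First, I would convert the moment assumption \eqref{E:multi-as-bd} into the appropriate tail-sum condition. Writing $X = \log(1+|\xi_0|)$, the layer-cake identity combined with the substitution $t = u^d$ gives
\[
\mathbb{E} X^d = d\int_0^\infty u^{d-1}\mathbb{P}(X > u)\,du,
\]
so $\mathbb{E}[\log(1+|\xi_0|)]^d < \infty$ is equivalent to $\sum_{n=0}^\infty n^{d-1}\mathbb{P}(\log(1+|\xi_0|) > n) < \infty$. Replacing $n$ by $\epsilon n$ and using the trivial bound $\mathbb{P}(|\xi_0| > e^{\epsilon n}) \le \mathbb{P}(\log(1+|\xi_0|) > \epsilon n)$, this yields, for every $\epsilon > 0$,
\[
\sum_{n=0}^\infty n^{d-1}\mathbb{P}(|\xi_0| > e^{\epsilon n}) < \infty.
\]

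Next, I would count indices and apply Borel-Cantelli over the full multi-index set. The number of $\alpha \in \mathbb{N}^d$ with $|\alpha| = n$ is $\binom{n+d-1}{d-1} \le C_d n^{d-1}$, so grouping by total degree,
\[
\sum_{\alpha \in \mathbb{N}^d} \mathbb{P}(|\xi_\alpha| > e^{\epsilon |\alpha|}) \le C_d \sum_{n=0}^\infty n^{d-1}\mathbb{P}(|\xi_0| > e^{\epsilon n}) < \infty.
\]
By Borel-Cantelli, the event $\{|\xi_\alpha| > e^{\epsilon |\alpha|}\}$ occurs for only finitely many $\alpha$ almost surely. Taking a countable sequence $\epsilon_k \downarrow 0$ and intersecting the corresponding full-measure events, we obtain that for every $\epsilon > 0$ there is a random constant $C(\epsilon,\omega) < \infty$ such that $|\xi_\alpha(\omega)| \le C(\epsilon,\omega)\, e^{\epsilon |\alpha|}$ for all $\alpha$.

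Finally, I would combine this with the Bernstein-Markov bound $|q_\alpha(z)| \le Me^{|\alpha|(V_K(z)+\epsilon)}$ for $|\alpha|\le n$. Then for any compact set $L \subset \mathbb{C}^d$, on which $V_K$ is bounded by continuity and regularity, we have
\[
|H_n(z,\omega)| \le \sum_{|\alpha|\le n} |\xi_\alpha||q_\alpha(z)| \le CM\cdot N(n)\cdot e^{n(V_K(z)+2\epsilon)},
\]
uniformly in $z \in L$, where $N(n) = O(n^d)$ counts multi-indices with $|\alpha|\le n$. Taking $\frac{1}{n}\log$ and sending $n \to \infty$, the $N(n)$ factor contributes $O(n^{-1}\log n) \to 0$, so $\limsup_n \frac{1}{n}\log|H_n(z,\omega)| \le V_K(z) + 2\epsilon$. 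Letting $\epsilon \to 0$ through a countable sequence completes the proof, and uniformity in $z \in L$ yields local boundedness of $\{\frac{1}{n}\log|H_n|\}$.

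There is no serious obstacle here: the proof is essentially Lemma \ref{L:as-bd} with two arithmetic adjustments, namely the passage from $\mathbb{E}\log(1+|\xi_0|) < \infty$ to $\mathbb{E}[\log(1+|\xi_0|)]^d < \infty$ to compensate for the $n^{d-1}$ growth of the number of multi-indices of a given total degree, and the observation that the $n^d$ total number of summands is absorbed into the exponential after dividing by $n$.
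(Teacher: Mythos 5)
Your proposal is correct and is exactly the elaboration the paper intends: the paper does not spell out a proof for this lemma but simply states that it "can be proven in the same way as Lemma \ref{L:prob-bd} and Lemma \ref{L:as-bd}," and your argument is that one-variable argument with the two natural arithmetic adjustments you identify (the moment condition of order $d$ compensating for the $\binom{n+d-1}{d-1}=O(n^{d-1})$ multi-indices of degree $n$, and the polynomial count $N(n)=O(n^d)$ being absorbed after taking $\frac{1}{n}\log$). The only implicit step worth flagging is that the bound $e^{|\alpha|(V_K(z)+2\epsilon)}\le e^{n(V_K(z)+2\epsilon)}$ for $|\alpha|\le n$ uses $V_K\ge 0$, which holds for regular non-pluripolar compact $K$, and the intersection over a countable sequence $\epsilon_k\downarrow 0$ is needed (and you include it) to get a single full-measure event valid for every $\epsilon>0$.
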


\subsection{Zeros of Polynomials}
In more than one variable, the zeros of a (non-constant) polynomial are not a bounded set. The zero set is, however, pluripolar and so of Lebesgue measure zero.

\medskip

The zero set of a polynomial $p$ is viewed as a positive (1,1) current (see \cite{K}) given by:
  $$
  Z_p:=dd^c\Big(\frac{1}{deg(p)}\log|p|\Big),
  $$ 
  where $dd^c=\frac{i}{\pi}\partial\overline{\partial}.$ When paired with a test form $\phi$ (i.e. a smooth $(n-1,n-1)$ form with compact support)
$\langle Z_p,\phi\rangle\in\mathbb{C}.$ 

For a random polynomial $H_n$ we consider its zero set as a  $(1,1)$ positive current $Z_{H_n}:=dd^c(\frac{1}{n}\log |H_n(z)|).$ 
  
\subsection{Convergence in Probability}

We state, without proof the following lemma, analogous to Lemma \ref{seq}.
\begin{lemma}\label{seqmulti}
Let $\{a_{\alpha}\}_{\alpha\in\mathbb{N}^d}$ be an array of non-zero complex numbers and $A\geq 0$ a real number.
The following equations are equivalent:
\begin{align*}
\text{(i)}\quad\lim_{n\rightarrow \infty}&\frac{1}{n}\log (\max_{|\alpha|\leq n} |a_\alpha|)=A.\\
\text{(ii)}\quad \lim_{n\rightarrow \infty}&\frac{1}{n}\log(\sum_{|\alpha|\leq n}|a_{\alpha}|)=A\\
\text{(iii)}\quad\lim_{n\rightarrow \infty}&\frac{1}{2n}\log(\sum_{|\alpha|\leq n}|a_{\alpha}|^2)=A.
\end{align*}
\end{lemma}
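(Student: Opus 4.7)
The plan is to prove this via the same elementary sandwiching that proves the one-variable version (Lemma \ref{seq}): the quantities in (ii) and (iii) differ from (i) only by a polynomial factor in $n$, which becomes negligible after taking $\frac{1}{n}\log$. The only ingredient specific to the multivariable case is that the cardinality of $\{\alpha \in \mathbb{N}^d : |\alpha| \le n\}$ is $\binom{n+d}{d}$, which grows polynomially in $n$ (like $n^d/d!$), so in particular $\frac{1}{n}\log \binom{n+d}{d} \to 0$.

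Concretely, write $M_n := \max_{|\alpha| \le n}|a_\alpha|$, $S_n := \sum_{|\alpha| \le n}|a_\alpha|$ and $T_n := \sum_{|\alpha| \le n}|a_\alpha|^2$. The trivial bounds
$$
M_n \;\le\; S_n \;\le\; \binom{n+d}{d} M_n, \qquad M_n^2 \;\le\; T_n \;\le\; \binom{n+d}{d} M_n^2
$$
hold for every $n$. Applying $\frac{1}{n}\log$ to the first chain and $\frac{1}{2n}\log$ to the second yields
$$
\tfrac{1}{n}\log M_n \;\le\; \tfrac{1}{n}\log S_n \;\le\; \tfrac{1}{n}\log M_n + \tfrac{1}{n}\log \tbinom{n+d}{d},
$$
$$
\tfrac{1}{n}\log M_n \;\le\; \tfrac{1}{2n}\log T_n \;\le\; \tfrac{1}{n}\log M_n + \tfrac{1}{2n}\log \tbinom{n+d}{d}.
$$
Since the error terms $\tfrac{1}{n}\log \binom{n+d}{d} = O(\tfrac{\log n}{n}) \to 0$, each of the three sequences $\tfrac{1}{n}\log M_n$, $\tfrac{1}{n}\log S_n$, $\tfrac{1}{2n}\log T_n$ has a limit if and only if the others do, and the limits agree. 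Hence (i), (ii) and (iii) are equivalent.

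There is no real obstacle here: the argument is purely arithmetic once one observes that the number of multi-indices of degree at most $n$ is polynomial in $n$. The hypothesis $A \ge 0$ plays no role in the equivalence itself and is inherited from the setting where the $a_\alpha$ arise as values of polynomials of degree $|\alpha|$, for which $\tfrac{1}{n}\log|a_\alpha|$ is naturally bounded below after passing to the limit.
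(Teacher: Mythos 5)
Your proof is correct and is exactly the expected elementary argument; the paper itself states Lemma \ref{seqmulti} without proof, remarking only that it is analogous to Lemma \ref{seq} (which is also stated without proof). The key observation — that $|\{\alpha \in \mathbb{N}^d : |\alpha|\le n\}| = \binom{n+d}{d}$ grows polynomially in $n$, so the factor disappears after applying $\frac{1}{n}\log$ — is the only genuinely multivariable ingredient, and your sandwich bounds handle it cleanly. Your closing remark is also accurate: $A\ge 0$ is not used in the equivalence (and in fact is forced, since $M_n \ge |a_0| > 0$ gives $\liminf \frac1n \log M_n \ge 0$); it is included in the statement because in the application $a_\alpha = q_\alpha(z)$ the limit is $V_K(z)\ge 0$.
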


The following theorem and its proof are similar to Theorem \ref{convprob}.

\begin{theorem}\label{convprobmulti}
     Let $\{a_{\alpha}\}_{\alpha\in\mathbb{N}^d}$ be an array of non-zero complex numbers
satisfying the equivalent conditions of Lemma \ref{seqmulti}.
Let $\{\xi_{\alpha}\}_{\alpha\in\mathbb{N}^d}$ be an array of i.i.d. non-degenerate random variables such that (\ref{multimeas}) holds. Then
\begin{equation}\label{prob1multi}
     \frac{1}{n}\log\Big|\sum_{|\alpha|\leq n}\xi_{\alpha}a_{\alpha}\Big|\rightarrow A \qquad \text{ in probability}.
     \end{equation}
     \end{theorem}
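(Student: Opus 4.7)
The plan is to follow the strategy of Theorem \ref{convprob} line by line, with the correct power of $n$ (namely $n^d$) appearing in the union-bound step to reflect the fact that the index set $\{\alpha \in \mathbb{N}^d : |\alpha| \le n\}$ has cardinality of order $n^d$. The hypothesis \eqref{multimeas}, which is the natural $d$-dimensional strengthening of \eqref{meas}, is calibrated exactly so that this union bound still gives $o(1)$.

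For the upper tail, I would first show that
$$
\mathbb{P}\Bigl(|\xi_\alpha| \le e^{\epsilon n/2} \text{ for all } |\alpha| \le n\Bigr) \to 1,
$$
using the union bound together with \eqref{multimeas}, which gives $\mathbb{P}(|\xi| > e^{\epsilon n/2}) = o(n^{-d})$, whereas the number of multi-indices is $O(n^d)$. On that event, the triangle inequality followed by Lemma \ref{seqmulti}(ii) gives $\frac{1}{n}\log|\sum_{|\alpha|\le n} \xi_\alpha a_\alpha| \le A + \epsilon$ for all large $n$, just as in \eqref{prob2}.

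For the lower tail I would bound
$$
\mathbb{P}\Bigl(\tfrac{1}{n}\log\bigl|\sum_{|\alpha|\le n} \xi_\alpha a_\alpha\bigr| < A - \epsilon\Bigr) \le \mathcal{Q}\Bigl(\sum_{|\alpha|\le n} \xi_\alpha a_\alpha\,;\, e^{n(A-\epsilon)}\Bigr)
$$
and apply Kolmogorov-Rogozin together with the rescaling identity \eqref{E:Q-fact-1}, reducing the problem to showing that
$$
\sum_{|\alpha|\le n}\left(1 - \mathcal{Q}\Bigl(\xi_\alpha;\,\tfrac{e^{n(A-\epsilon)}}{|a_\alpha|}\Bigr)\right) \longrightarrow \infty.
$$
Non-degeneracy of $\xi_0$ again supplies constants $D_1, D_2$ with $\mathcal{Q}(\xi_0;d) \le D_2 < 1$ for $d \le D_1$, so it suffices to prove $|J_n| \to \infty$, where $J_n = \{\alpha : |\alpha| \le n,\; e^{n(A-\epsilon)}/|a_\alpha| \le D_1\}$. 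The case $A=0$ is immediate: for each fixed $\alpha$ the ratio tends to $0$, so each $\alpha$ eventually lies in $J_n$.

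The case $A>0$ is where the multivariable structure really enters, and this is the part I expect will require the most care. Fixing $\delta \in (0,\epsilon/2)$ and applying Lemma \ref{seqmulti}(i), for each $m\ge N$ I would choose a multi-index $\alpha(m)$ with $|\alpha(m)| \le m$ and $|a_{\alpha(m)}| \ge e^{m(A-\delta)}$ (the lexicographic order introduced in Section \ref{SS:prelim-multi} makes such a choice well-defined). If $\alpha(m_1) = \alpha(m_2) = \alpha$ for $m_1 < m_2$, then $e^{m_2(A-\delta)} \le |a_\alpha| \le e^{m_1(A+\delta)}$, which forces $m_2 \le m_1(A+\delta)/(A-\delta)$, yielding the multiplicity bound $|\{m : \alpha(m)=\alpha\}| \le \frac{2\delta m}{A-\delta}+1$, the exact analogue of \eqref{prob5}. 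For $m$ in the window $[n(A-\epsilon/2)/(A-\delta),\, n]$ one checks that $e^{n(A-\epsilon)}/|a_{\alpha(m)}| \le e^{-\epsilon n/2} \le D_1$ for large $n$, so all such $\alpha(m)$ lie in $J_n$. Dividing the length of this window by the maximum multiplicity gives $|J_n| \ge (\epsilon/2 - \delta)/(3\delta)$ for large $n$, and since $\delta$ is arbitrary this forces $|J_n| \to \infty$. The only genuine adaptation beyond Theorem \ref{convprob} is bookkeeping with multi-indices under the lexicographic order, which ensures the selection $\alpha(m)$ is consistent across $m$; everything else is a verbatim transcription.
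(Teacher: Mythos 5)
Your proposal is correct and matches the paper's intent exactly: the paper states only that ``the following theorem and its proof are similar to Theorem \ref{convprob},'' and your line-by-line adaptation (union bound over the $O(n^d)$ multi-indices for the upper tail, Kolmogorov--Rogozin with the selection $\alpha(m)$ and the multiplicity bound analogous to \eqref{prob5} for the lower tail) is precisely what that remark asks the reader to carry out. The bookkeeping with multi-indices, including the verification that $\alpha(m)$ lies in the index set $\{|\alpha|\le n\}$ when $m\le n$ and that the window $[n(A-\epsilon/2)/(A-\delta),\,n]$ gives $e^{n(A-\epsilon)}/|a_{\alpha(m)}|\le e^{-\epsilon n/2}$, is all handled correctly.
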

This gives a multivariable analogue of Theorem \ref{goal}.
\begin{theorem}\label{goalmulti}
Let $K\subset\mathbb{C}^d$ be a regular non-pluripolar compact set and let $\{\xi_{\alpha}\}_{\alpha\in\mathbb{N}^d}$ be non-degenerate
i.i.d. complex random variables satisfying (\ref{multimeas}). Consider the random polynomials
$$
H_n(z,\omega)=\sum_{|\alpha|\leq n}\xi_{\alpha}q_{\alpha}(z).
$$
    Then $\frac{1}{n}\log|H_n|\rightarrow V_K$ in probability in $L^1_{loc}(\mathbb{C}^d)$
    and $Z_{H_n}\rightarrow dd^cV_K$ in probability in the sense that for every subsequence $Y\subset\mathbb{N}$ there is a further subsequence $Y^*\subset Y$ such that 
    $$
    \lim_{n\in Y^*} Z_{H_n}=dd^cV_K \qquad \text{almost surely.}
    $$
     
\end{theorem}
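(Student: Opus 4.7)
The plan is to follow the same subsequence-plus-diagonalization scheme used to derive Theorem \ref{goal} from Theorem \ref{convprob}, with the multivariable ingredients already developed substituted for their one-variable counterparts. The three inputs I intend to use are: Theorem \ref{convprobmulti}, which gives pointwise convergence in probability of $\tfrac{1}{n}\log|\sum_{|\alpha|\le n}\xi_\alpha a_\alpha|\to A$ whenever $\{a_\alpha\}$ satisfies Lemma \ref{seqmulti}; Lemma \ref{L:upper-multi}, which provides the matching subsequential upper bound under \eqref{multimeas}; and the pluripotential analogue of Theorem \ref{b} (Proposition 4.4 of \cite{BL}), which converts pointwise convergence on a countable dense set, together with an upper bound by a continuous plurisubharmonic function, into $L^1_{loc}(\mathbb{C}^d)$ convergence. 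Once the functional convergence is in hand, continuity of $dd^c$ on distributions will immediately transfer it to convergence of currents.

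Concretely, I would first pick a countable dense set $\{z_i\}_{i\in\mathbb{N}}\subset\mathbb{C}^d$ disjoint from the pluripolar set $\bigcup_\alpha\{q_\alpha=0\}$ (which is pluripolar as a countable union of proper algebraic varieties, hence has dense complement). By the multivariable form of \eqref{ortho} combined with Lemma \ref{seqmulti}, the array $a_\alpha=q_\alpha(z_i)$ satisfies the hypotheses of Theorem \ref{convprobmulti} with $A=V_K(z_i)$, so $\tfrac{1}{n}\log|H_n(z_i,\omega)|\to V_K(z_i)$ in probability at each such $z_i$. Given any subsequence $Y\subset\mathbb{N}$, I apply Lemma \ref{L:upper-multi} to extract $Y_0\subset Y$ along which, almost surely, $\{\tfrac{1}{n}\log|H_n|:n\in Y_0\}$ is locally bounded above and $\limsup_{n\in Y_0}\tfrac{1}{n}\log|H_n(z,\omega)|\le V_K(z)$ for every $z$. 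The Cantor diagonalization from the proof of Theorem \ref{prob} then produces a further $Y^*\subset Y_0$ along which, almost surely, pointwise convergence $\tfrac{1}{n}\log|H_n(z_i,\omega)|\to V_K(z_i)$ holds for every $i$ while the upper bound and local boundedness persist.

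At this point the hypotheses of the $L^1_{loc}$ convergence theorem for plurisubharmonic functions are met: each $\tfrac{1}{n}\log|H_n|$ lies in $psh(\mathbb{C}^d)$, the family is locally bounded above along $Y^*$, $(\limsup_{n\in Y^*}\tfrac{1}{n}\log|H_n|)^*\le V_K$, and $V_K$ is continuous since $K$ is regular. Applying \cite{BL}, Proposition 4.4 yields $\tfrac{1}{n}\log|H_n|\to V_K$ in $L^1_{loc}(\mathbb{C}^d)$ almost surely along $Y^*$, and applying $dd^c$ then gives $Z_{H_n}\to dd^cV_K$ along $Y^*$ almost surely. Because every subsequence $Y$ contains such a $Y^*$, the metric characterization of convergence in probability cited in the proof of Theorem \ref{prob} delivers both conclusions stated in the theorem. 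The argument is essentially structural; the only place where genuinely multivariable content enters is the $L^1_{loc}$ compactness step for plurisubharmonic functions, and this is exactly what \cite{BL} supplies, so I do not anticipate any substantive obstacle beyond matching notation across the cited results.
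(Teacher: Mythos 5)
Your proposal is correct and follows essentially the same route as the paper, which explicitly invokes the multivariable versions of Theorems \ref{a}, \ref{b}, and \ref{prob}, together with Lemma \ref{L:upper-multi}, Lemma \ref{seqmulti}, and Theorem \ref{convprobmulti}, and then says ``as in the one variable case, \textit{mutatis mutandis}.'' You have simply spelled out the diagonalization and the choice of a dense set avoiding $\bigcup_\alpha\{q_\alpha=0\}$ that the paper leaves implicit.
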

\begin{proof}
The multivariable versions of Theorems \ref{a} and \ref{b}, with plurisubharmonic and pluripolar replacing subharmonic and polar are valid (see \cite{K} and  \cite{BL}). The multivariable version of Theorem \ref{prob} is similarly valid. With $a_{\alpha}=q_{\alpha}(z)$,  Lemma \ref{seqmulti} holds at all points of $\mathbb{C}^d$ except for the set of Lebesgue measure zero where at least one of the polynomials $q_{\alpha}(z)=0$. Therefore Theorem \ref{goalmulti} follows from Lemma \ref{L:upper-multi} and Theorem \ref{convprobmulti} as in the one variable case, \textit{mutatis mutandis}.
\end{proof}

\subsection{Almost Sure Convergence}

In this subsection, we give a proof of almost sure convergence of the roots of the Kac ensemble in dimension $d = 2$. When $d \ge 3$, we can actually use the Kolmogorov-Rogozin inequality to get a strong enough bound to apply the Borel-Cantelli lemma, since the number of random variables in the Kac ensemble is $O(n^d)$, so the Kolmogorov-Rogozin inequality gives (at best) a small ball probability bound of order $O(n^{-d/2})$. This approach is taken in \cite{Ba2}.

\medskip

Let $\{\xi_{i, j} : (i, j) \in \mathbb{N}^2 \}$ be an array of i.i.d. random variables. Define the two-variable Kac ensemble by
\begin{equation}
\label{E:2-Kac}
K_n(z_1, z_2) = \sum_{0 \le i + j \le n} \xi_{i, j} z_1^i z_2^j.
\end{equation}

\begin{theorem}
\label{T:2-Kac} Let $K_n$ be as in \eqref{E:2-Kac}, and suppose that the random variables $\xi_i$ satisfy $\mathbb{E} [\log(1 + |\xi_0|)]^2 < \infty.$ Let $Z_{K_n}$ be the zero set of $K_n$. 
Then
$$
\lim_{n \to \infty} Z_{K_n} = dd^c V \quad \text{ almost surely},
$$
where $V(z, w) = \max(0, \log(z), \log(w)).$
\end{theorem}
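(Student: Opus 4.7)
The plan is to mirror the almost-sure strategy of Section \ref{S:almost-sure}: prove that $\frac{1}{n}\log|K_n(z, w)|$ converges to $V(z, w)$ almost surely at Lebesgue-a.e.\ $(z, w) \in \mathbb{C}^2$, and then invoke the plurisubharmonic versions of Theorems \ref{b} and \ref{as} (asserted valid in the proof of Theorem \ref{goalmulti}) to conclude $Z_{K_n} \to dd^c V$ almost surely. The upper bound $\limsup_n \frac{1}{n}\log|K_n(z, w)| \le V(z, w)$ on all of $\mathbb{C}^2$ is immediate from Lemma \ref{L:multi-as-upper} with $d = 2$, since the hypothesis $\mathbb{E}[\log(1+|\xi_0|)]^2 < \infty$ is exactly \eqref{E:multi-as-bd}.

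For the matching lower bound, I would apply the Nguyen--Vu small-ball theorem (Theorem \ref{T:small-ball}) to the sum $\sum_{i+j\le n}\xi_{i,j}\,z^i w^j$, whose number of coefficients is $N = \binom{n+2}{2} \sim n^2/2$. Choosing $n' = N^{2/3}$ and $C = 1 + \delta/2$, the theorem shows that whenever the normalized coefficient family
$$\mathcal{W}_n(z, w) = \bigl\{z^i w^j / \|(z^i w^j)_{i+j\le n}\| : i+j \le n \bigr\}$$
fails to be covered by $O(N^{2/3+\delta/2}) = O(n^{4/3+\delta})$ balls of radius $e^{-\epsilon n}$, the probability that $\frac{1}{n}\log|K_n(z, w)| < V(z, w) - 2\epsilon$ is bounded by $N^{-1-\delta/2} \sim n^{-2-\delta}$, which is summable. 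By Borel--Cantelli the almost-sure lower bound thus reduces to a geometric claim: for a.e.\ $(z, w) \in \mathbb{C}^2$ and every $\epsilon > 0$ there is $\delta > 0$ such that $\mathcal{W}_n(z, w)$ fails this coverability property for all but finitely many $n$.

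To verify this I would follow the template of Theorem \ref{T:general-KZ}. Fix $(z_0, w_0)$ outside the null set $\{z_0 w_0 = 0\} \cup \{|z_0|=1\} \cup \{|w_0|=1\} \cup \{|z_0|=|w_0|\}$. Because $V(z_0, w_0) = \max(0, \log|z_0|, \log|w_0|)$ is realized at an extreme corner of the triangle $\{(i, j) : i+j \le n\}$, an elementary linear count shows that the good set
$$J_n(z_0, w_0, \epsilon) = \bigl\{(i,j) : i+j \le n,\ |z_0|^i|w_0|^j \ge e^{n(V(z_0, w_0)-\epsilon)}\bigr\}$$
has size $\Omega(\epsilon^2 n^2)$, vastly exceeding $n^{4/3+\delta}$. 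A small enough $\gamma$ ensures $|w_{(i,j)}(z,w)| \ge e^{-3\epsilon n}$ for all $(i,j) \in J_n$ and $(z, w) \in B((z_0, w_0), \gamma)$. If $\mathcal{W}_n(z,w)$ could be covered by $n^{4/3+\delta}$ balls of radius $e^{-7\epsilon n}$, a first pigeonhole would place at least $n^{2/3-\delta}$ good indices in a common ball, and a second, two-dimensional pigeonhole in index space would then produce two distinct such indices $(i_1, j_1)$, $(i_2, j_2)$ with $\max(|i_2 - i_1|, |j_2 - j_1|) \le C n^{2/3 + \delta/2}$ forcing $|z^{i_2 - i_1} w^{j_2 - j_1} - 1| \le e^{-3\epsilon n}$.

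The main obstacle is a two-variable analogue of Lemma \ref{L:poly-estimate}: for fixed $(\alpha, \beta) \ne (0, 0)$ with $\max(|\alpha|, |\beta|) \le Cn^{2/3+\delta/2}$, control the Lebesgue measure of $\bigl\{(z, w) \in B((z_0,w_0),\gamma) : |z^\alpha w^\beta - 1| \le e^{-3\epsilon n}\bigr\}$. I would handle this by Fubini: assuming by symmetry $|\alpha| \ge |\beta|$, freeze $w$ with $|w| \in [|w_0|/2, 3|w_0|/2]$, rewrite the inequality as $|z^\alpha - w^{-\beta}| \le e^{-3\epsilon n}|w|^{-\beta}$ (after clearing $w^\beta$, and with the obvious sign modifications if $\alpha$ or $\beta$ is negative), and apply Lemma \ref{L:poly-estimate} to this degree-$|\alpha|$ polynomial in $z$ with unit leading coefficient. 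The ratio $|\beta|/|\alpha| \le 1$ keeps the $|w|^{-\beta}$ correction bounded by a $(z_0, w_0)$-dependent constant, yielding a measure bound of order $n^{2/3+\delta/2} e^{-6\epsilon n^{1/3 - \delta/2}}$, which is super-polynomially small. A union bound over the polynomially many eligible pairs of good indices remains summable, so Borel--Cantelli gives the spread condition for a.e.\ $(z, w) \in B((z_0, w_0), \gamma)$. Covering $\mathbb{C}^2$ (off the null set) by countably many such balls yields the pointwise lower bound a.e., and the multivariate versions of Theorems \ref{b} and \ref{as} then deliver $Z_{K_n} \to dd^c V$ almost surely.
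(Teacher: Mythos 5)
Your proof is (up to filling in routine details) correct, but it takes a substantially longer route than the paper. The paper's argument sidesteps the two-dimensional small-ball machinery entirely: for any fixed $(z_1,z_2)$, at least one of the one-dimensional ``slices'' $\sum_{i\le n}\xi_{i,0}z_1^i$ or $\sum_{j\le n}\xi_{0,j}z_2^j$ already has normalized log-modulus converging to $V(z_1,z_2)$, so the summability of the concentration function $\mathcal Q$ for that one-variable Kac polynomial (which is exactly what the proof of Theorem~\ref{T:general-KZ} delivers for a.e.\ value of the relevant coordinate) transfers to $K_n$ via the monotonicity property \eqref{E:Q-fact-2}: $\mathcal Q(X+Y;r)\le\mathcal Q(X;r)$ when $X$ and $Y$ are independent, taking $X$ to be the chosen slice and $Y$ the rest of $K_n$. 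This avoids having to verify any two-dimensional coverability condition or prove a two-variable analogue of Lemma~\ref{L:poly-estimate}. Your approach instead runs Theorem~\ref{T:small-ball} directly on the full $N\sim n^2/2$ coefficient family, and so has to supply (i) a lower bound $|J_n|=\Omega(\epsilon^2 n^2)$, (ii) a two-stage pigeonhole in index space, and (iii) a Fubini-based two-variable small-value estimate for monomials $z^\alpha w^\beta$; each of these is plausible and your sketch of them is sound, but they constitute genuine new work the paper doesn't need. What your route buys is robustness: it does not rely on the special feature of the 2D Kac ensemble that a single one-dimensional slice attains $V$, and would adapt to multivariable arrays $f_{n,\alpha}$ without such slice structure (indeed this is the kind of generalization the paper only gestures at in its closing remark). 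What the paper's route buys is brevity and a cleaner use of the concentration-function calculus already developed in Section~\ref{S:cvg-prob}. One small cautionary note on your version: you should be explicit that the ``good set'' size bound, the choice of $\gamma$, and the constants in the Fubini estimate all degenerate as $(z_0,w_0)$ approaches the excluded null set, which is why the final statement is only for a.e.\ $(z,w)$; this is handled but should be said when the argument is written out in full.
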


\begin{proof} A multivariable version of Theorem \ref{as} holds with the same conditions, which we check here.
 The fact that almost surely,
$\{\frac{1}{n}\log |K_n| : n \in \mathbb{N}\}$ is locally bounded above and
\begin{equation}
\label{E:multi-upper}
\limsup_{n \to \infty} \frac{1}n \log |K_n(z)| \le V(z)\;\; \text{ for every $z \in \mathbb{C}^2$},
\end{equation}
follows from Lemma \ref{L:multi-as-upper}. We now show that for every $(z_1, z_2) \in \mathbb{C}^2$ with $z \ne 0$, that
\begin{equation}
\label{E:K_n-want}
\liminf_{n \to \infty} \frac{1}n \log |K_n(z_1, z_2)| \ge V(z_1, z_2)\quad \text{ almost surely}.
\end{equation}
We either have that 
$$
\lim_{n \to \infty} \frac{1}n \log \left(\sum_{0 \le i \le n} |z_1|^i \right) = V(z_1, z_2), \quad \text{ or } \quad \lim_{n \to \infty} \frac{1}n \log \left(\sum_{0 \le i \le n} |z_2|^i \right) = V(z_1, z_2).
$$
We will assume that the first option occurs. Then by the proof of Theorem \ref{T:general-KZ}, for any $\epsilon > 0$, we have that
\begin{equation}
\label{E:Q-multi}
\sum_{n=0}^\infty \mathcal{Q}\left(\sum_{0 \le i \le n} \xi_{i, 0} z_1^i  \; ; \; e^{n(V(z_1, z_2) - \epsilon)} \right)  < \infty.
\end{equation}
By property \eqref{E:Q-fact-2} for the concentration function $\mathcal{Q}$ regarding sums of independent random variables, for every $\epsilon > 0$ we have that
$$
\sum_{n=0}^\infty \mathcal{Q}\left(K_n(z)  \; ; \; e^{n(V(z_1, z_2) - \epsilon)} \right)  < \infty,
$$
which in turn proves \eqref{E:K_n-want}.
\end{proof}

\begin{remark} Though we have not done so here, it is possible to prove a much more general result in the multivariable case with random polynomial ensembles of the form
$$
\sum_{|\alpha| \le n} \xi_\alpha f_{n, \alpha} z^\alpha,
$$
for some coefficients $f_{n, \alpha}$ satisfying similar conditions to those specified in Section \ref{S:almost-sure}. All of the necessary results used in Section \ref{S:almost-sure} have corresponding multivariable versions, and the proof goes through in a similar way.
\end{remark}

\end{document}